\DeclareSymbolFontAlphabet{\mathbb}{AMSb}
\DeclareSymbolFontAlphabet{\mathbbl}{bbold}
\def\Sp{\mathsf{Sp}}
\def\O{\mathsf{O}}
\def\K{\Bbbk}
\definecolor{darkred}{rgb}{0.7,0,0} 
\newcommand{\defn}[1]{{\color{darkred}\emph{#1}}} 
\numberwithin{equation}{section}
\theoremstyle{definition}
\newtheorem* {theorem*}{Theorem}
\newtheorem* {proposition*}{Proposition}
\newtheorem* {corollary*}{Corollary}
\newtheorem* {conjecture*}{Conjecture}
\newtheorem{theorem}{Theorem}[section]
\theoremstyle{definition}
\newtheorem* {example*}{Example}
\newtheorem{lemma}[theorem]{Lemma}
\theoremstyle{definition}
\newtheorem{definition}[theorem]{Definition}
\theoremstyle{definition}
\newtheorem{proposition}[theorem]{Proposition}
\newtheorem{corollary}[theorem]{Corollary}
\newtheorem{remark}[theorem]{Remark}
\newtheorem*{remark*}{Remark}
\theoremstyle{definition}
\newtheorem {example}[theorem]{Example}
\theoremstyle{definition}
\theoremstyle{definition}
\theoremstyle{definition}
\def\({\left(}
\def\){\right)}
\newcommand{\CC}{\mathbb{C}}
\def\CC{\mathbb{C}}
\def\ZZ{\mathbb{Z}}
\def\GL{\textsf{GL}}
\def\barr{\begin{array}}
\def\earr{\end{array}}
\def\ba{\begin{aligned}}
\def\ea{\end{aligned}}
\def\be{\begin{equation}}
\def\ee{\end{equation}}
\def\quand{\quad\text{and}\quad}
\def\ben{\begin{enumerate}}
\def\een{\end{enumerate}}
\def\bei{\begin{itemize}}
\def\eei{\end{itemize}}
\def\x{\textbf{x}}
\renewcommand{\dim}{\operatorname{dim}}
\def\path{\mathsf{path}}
\def\whSym
\def\whQSym
\def\NN{\mathbb{N}}
\lstdefinelanguage{Sage}[]{Python}
{morekeywords={False,sage,True},sensitive=true}
\definecolor{dblackcolor}{rgb}{0.0,0.0,0.0}
\definecolor{dbluecolor}{rgb}{0.01,0.02,0.7}
\definecolor{dgreencolor}{rgb}{0.2,0.4,0.0}
\definecolor{dgraycolor}{rgb}{0.30,0.3,0.30}
\def\KLt{\Bbbk(\hspace{-0.5mm}(t)\hspace{-0.5mm})}
\def\KPt{\Bbbk[\hspace{-0.5mm}[t]\hspace{-0.5mm}]}
\newcommand{\ord}{\operatorname{ord}}
\begin{document}

\title{Type AIII orbits in the affine flag variety of type A}

\author{
Kam Hung TONG\thanks{Department of Applied Mathematics, The Hong Kong Polytechnic University \\
\hspace*{1.8em}E-mail address: {\tt kam-hung-terry.tong@polyu.edu.hk}.
}
}
\date{\today}

\maketitle

\begin{abstract}
Matsuki and Oshima introduced the notion of clans, which are incomplete matchings with positive or negative signs on isolated vertices. They discovered that clans parametrise $K$-orbits in the flag varieties for classical linear groups, where $K$ is a fixed point subgroup of an involution in the same classical linear group. In this work we investigate the affine version of these orbits. For a field $\K$ with characteristic not equal to two, we construct an explicit bijection between the $\GL_p(\KLt) \times \GL_q(\KLt)$-orbits in the affine flag variety and certain objects called affine $(p,q)$-clans. These affine $(p,q)$-clans can be concretely interpreted as involutions in the affine permutation group with positive or negative signs on fixed points.
\end{abstract}

\setcounter{tocdepth}{2}
\tableofcontents

\section{Introduction}
This paper is the continuation of \cite{tong2024orthogonalsymplecticorbitsaffine}. Let $G$ be a connected reductive algebraic group over the field of complex numbers $\CC$, $B$ a Borel subgroup of $G$, and $K$ the fixed point subgroup of a holomorphic involution of $G$. In this classical setting, the $K$-orbits on the \defn{flag variety} $G/B$ are well-studied. Matsuki and Oshima \cite{MO} introduced the notion of \defn{clans}, which are incomplete matchings with positive or negative signs on isolated vertices. They discovered that clans parametrise $K$-orbits in the flag varieties for classical linear groups. Yamamoto \cite{Yamamoto} gave a full proof of this classification for the $\GL_p(\CC) \times \GL_q(\CC)$-orbits in the type A flag variety, which are often called type AIII orbits.

Can, Joyce, and Wyser \cite{CJW} studied the maximal chains in the weak order poset for the three types of $K$-orbits when $G = \GL_n(\CC)$, including the type AIII orbits, and specifically described a formula for Schubert classes. Woo and  Wyser \cite{woo2015combinatorial}, and Burks and Pawlowski \cite{burks2018reduced} studied notions of pattern avoidance and reduced words for clans and related them to the closures of the $K$-orbits in the flag variety.
More applications can be found in \cite{colarusso2014gelfand, mcgovern2009closures, mcgovern2009pattern, wyser2014polynomials, wyser2017polynomials}. 
 
This work classifies the affine version of the type AIII orbits. To be more specific, suppose that $\KLt$ is the field of formal Laurent series over a field $\K$ with characteristic not equal to two, and $\KPt$ is the ring of formal power series over $\K$. Let $G = \GL_n(\KLt)$ be the general linear group over $\KLt$, $B$ the Iwahori subgroup of $G$ consisting of all upper-triangular matrices modulo $t$ in $\GL_n(\KPt)$, and $K = \GL_p(\KLt) \times \GL_q(\KLt)$ with $n = p+q$. In this work, we study the orbits of $K$ on the \defn{affine flag variety} $\GL_n(\KLt)/B$. 
\subsection{The classical background}
We first briefly review the results in the classical case.

Denote $I_n$ to be the $n$-by-$n$ identity matrix. Let $G=\GL_n(\CC)$, let $B \subset G$ be the Borel subgroup of $G$ consisting of upper triangular matrices, and let $\theta$ be the involution of $G$ defined by 
\[\theta(g) = \begin{pmatrix} I_p & 0 \\ 0 & -I_q \end{pmatrix} g \begin{pmatrix} I_p & 0 \\ 0 & -I_q \end{pmatrix}\]
 for integers $p, q \geq 0$ with $n = p+q$. The fixed point subgroup $K = G^{\theta} := \{g \in G : \theta(g) = g\}$ is the product group 
 \[\GL_p(\CC) \times \GL_q(\CC) = \left\{\begin{pmatrix} k_{1} & 0 \\ 0 & k_{2} \end{pmatrix} : k_{1} \in \GL_p(\CC), k_{2} \in \GL_q(\CC)\right\}.\]
 In the classification of involutions for $\GL_n(\CC)$, this $\theta$ is called  type AIII. On the other hand, the involutions $\theta$ where $K = G^{\theta}= \O_n(\CC)$ or $\Sp_n(\CC)$ are called type AI and type AII, respectively. See \cite[Chapter 5, \S1.5]{onishchik2012lie} for more details.


Denote $\ZZ$ to be the ring of integers. For integers $a,b$, denote $[a,b] = \{m \in \ZZ: a \leq m \leq b\}$ and denote $[a] = [1,a]$. A \defn{(complete) flag} is a strictly increasing sequence of vector subspaces 
\[\{0\} = V_0 \subset V_1 \subset V_2 \subset \dots \subset V_n = \CC^n,\]
 and there is an explicit bijection between $\GL_n(\CC)/B$ and the set $Fl_n$ consisting of all complete flags. The bijection is as follows: for any $gB \in \GL_n(\CC)/B$, denote $g_i$ to be the $i$-th column of $g$ for $i \in [1, n]$, define $V_0 = \{ 0 \}$ and define $V_i = \CC g_1 \oplus \CC g_2 \oplus \dots \oplus \CC g_i$ for $i \in [n]$. Then $\{0\} = V_0 \subset V_1 \subset V_2 \subset \dots \subset V_n = \CC^n$ is a complete flag.

A \defn{$(p,q)$-clan} is an $n$-tuple $(c_1, c_2, \dots c_n)$ such that each $c_i$ is either $+$, $-$ or a natural number, such that every natural number, if it appears, appears exactly twice, and the number of $+$ signs minus the number of $-$ signs must be $p-q$. 
Two such $n$-tuples $c = (c_1, c_2, \dots c_n)$ and $d = (d_1, d_2, \dots d_n)$ are equivalent if the following holds:
\ben
\item[(i)] For all $i \in [n]$, $c_i = +$ if and only if $d_i = +$, and $c_i = -$ if and only if $d_i = -$.
\item[(ii)] For all $i,j \in [n]$ with $i \neq j$, $c_i = c_j \in \NN$ if and only if $d_i = d_j \in \NN$.
\een
We do not distinguish between $(p,q)$-clans that are equivalent in the above sense. As a remark, a $(p,q)$-clan contains the same data as an involution in the  permutation group $S_n$ with fixed points labeled by $+$ or $-$.

The $(p,q)$-clans can be represented by arc diagrams, which contain $n$ points lying in a row labelled as $1, 2, \dots, n$, with an arc joining points $i$ and $j$ if $c_i = c_j \in \NN$, and plus or minus signs labelled on the points $k$ if $c_k = +$ or $-$ respectively. The following are the arc diagrams for the $(4,3)$-clans $(1, 2, +, + , - , 2, 1)$ and $(+, 1, 2, 1, 3, 3, 2)$ respectively:
\begin{center}
\begin{tikzpicture}[xscale=0.5, yscale=0.35,>=latex]
\node at (-3,0) (1) {$\circ$};
\node at (-2,0) (2) {$\circ$};
\node at (-1,0) (3) {$\circ$};
\node at (0,0) (4) {$\circ$};
\node at (1,0) (5) {$\circ$};
\node at (2,0) (6) {$\circ$};
\node at (3,0) (7) {$\circ$};
\node at (0,-0.5) (8) {$+$};
\node at (-1,-0.5) (9) {$+$};
\node at (1,-0.5) (10) {$-$};

  \draw[-, thick] (1.north) to [out=75, in = 105] (7.north) ;
    \draw[-, thick] (2.north) to [out=75, in = 105] (6.north) ;
\end{tikzpicture}
\quad
\begin{tikzpicture}[xscale=0.5, yscale=0.35,>=latex]
\node at (-3,0) (1) {$\circ$};
\node at (-2,0) (2) {$\circ$};
\node at (-1,0) (3) {$\circ$};
\node at (0,0) (4) {$\circ$};
\node at (1,0) (5) {$\circ$};
\node at (2,0) (6) {$\circ$};
\node at (3,0) (7) {$\circ$};
\node at (-3,-0.5) (8) {$+$};

  \draw[-, thick] (2.north) to [out=75, in = 105] (4.north) ;
    \draw[-, thick] (3.north) to  [out=75, in = 105] (7.north) ;
     \draw[-, thick] (5.north) to  [out=75, in = 105] (6.north) ;
     \end{tikzpicture}.
\end{center}

In \cite[Proposition 2.2.6]{Yamamoto}, Yamamoto gave an explicit inductive algorithm to produce a $(p,q)$-clan $c(x) = (c_1, c_2, \dots,  c_n)$ from a flag $x = (V_0, V_1, \dots, V_n)$.
The algorithm involves some dimensional $K$-invariants of $x$; see~\cite[\S2.2]{Yamamoto}. 
Yamamoto proved that this algorithm is a bijection between the $K$-orbits on the set $Fl_n$ and $(p,q)$-clans \cite[Theorem 2.2.8]{Yamamoto}. Our goal here is to describe an affine generalization of the $K$-orbits.

%

\subsection{The affine type AIII orbits in affine flag variety}

In this paper we study the affine analog of type AIII $K$-orbits in the flag variety in the following sense. Let $\KLt$ be the field of formal Laurent series in $t$ consisting of all the formal sums $ \sum^{\infty}_{i\geq N} a_i t^i$, in which $N \in \ZZ$ and $a_i \in \K$ for $i \geq N$. Let $\KPt$ be the ring of formal power series consisting of all the formal sums $ \sum^{\infty}_{i\geq 0} a_i t^i$, in which $a_i \in \K$ for $i \geq 0$. Let $\KPt^*$ be the multiplicative group consisting of invertible elements in $A$. Equivalently, $A^*$ consists of all elements in $A$ with non-zero constant terms. Define the order \defn{$\ord$} of a formal Laurent series $f(t) =  \sum^{\infty}_{i = N} a_it^i \in \KLt$ to be the smallest integer $N$ such that $a_N\neq 0$. 

We redefine $G = \GL_n(\KLt)$ to be the group of invertible $n$-by-$n$ matrices over $\KLt$ and redefine $B$ to be the subgroup consisting of all upper triangular modulo $t$ matrices in $\GL_n(\KPt)$, that is, invertible matrices with entries in $\KPt$ that become upper triangular if we set $t = 0$ for these matrices. The subgroup $B$ is often called the \defn{Iwahori subgroup}, and the set of cosets $G/B$ is often called the \defn{affine flag variety}. The affine flag variety has been studied in \cite{lam2021back, lee2019combinatorial, Nadler04}, for example.

We study the $K:= \GL_p(\KLt) \times \GL_q(\KLt)$-orbits in the affine flag variety by using an analogue of Yamamoto's work in the classical case.
\begin{definition}\label{affine-clan-def}
An \defn{affine $(p,q)$-clan} is a $\ZZ$-indexed sequence $c = (\dots,c_1,c_2,c_3,\dots)$ with $n = p+q$
such that 
\ben
\item[(i)]each $c_i$ is either $+$, $-$ or an integer;
\item[(ii)]for $k \in \ZZ$, $c_{i+kn} = c_{i}+kn$ if $c_i$ is an integer and $c_{i+n} = c_i$ if $c_i$ is $+$ or $-$;
\item[(iii)] $\#\{ i \in [n] : c_i = +\} - \#\{ i \in [n] : c_i = -\} = p-q$,
and every integer, if it appears, appears exactly twice in the sequence.
\een

Two such $\ZZ$-indexed sequences $c = (\dots,c_1,c_2,c_3,\dots)$ and $d = (\dots,d_1,d_2,d_3,\dots)$ are equivalent if the following holds:
\ben
\item[(i)] For all $i \in \ZZ$, $c_i = +$ if and only if $d_i = +$, and $c_i = -$ if and only if $d_i = -$.
\item[(ii)] For all $i,j \in \ZZ$ with $i \neq j$, $c_i = c_j \in \ZZ$ if and only if $d_i = d_j \in \ZZ$.
\een
We do not distinguish between affine $(p,q)$-clans that are equivalent in the above sense. 
\end{definition}

Note that from the periodicity in the definition of affine $(p,q)$-clans above, the $n$ elements $c_1, c_2, c_3, \dots, c_n$ in any affine $(p,q)$-clan $c = (\dots,c_1,c_2,c_3,\dots)$ determine the whole $\ZZ$-indexed sequence $c$. Therefore we can denote an affine $(p,q)$-clan $c$ as $c = (c_1, c_2, \dots, c_n)$.
\begin{example}
The affine $(1,1)$-clans are $(+,-)$, $(-,+)$ and $(1, 1+2k)$ for $k \in \ZZ$. The affine $(2,1)$-clans are $(+, + ,-)$, $(+, -, +)$, $(-, +, +)$, $(1, 1+3k, +)$, $(+, 1, 1+3k)$ and $(1, +, 1+3k)$ for $k \in \ZZ$.
\end{example}
A useful graphical method of representing the affine $(p,q)$-clans is through the \defn{winding diagrams} of affine $(p,q)$-clans:
\begin{center}
\begin{tikzpicture}
  \foreach \x in {1,...,8}
  {
    \node at ({90-(360/8)*(\x-1)}:1) [circle,fill,inner sep=1.5pt] (p\x) {};
    \node at ({90-(360/8)*(\x-1)}:0.7) [draw = none, fill=none] (q\x) {\x};
  }
\node at (1,-0.24) [draw = none, fill=none] (plus) {$+$};
\node at (0.24,-1) [draw = none, fill=none] (minus) {$-$};
\draw (p8) to[out=75, in=150] (p1);
\draw [-,>=latex,domain=0:100,samples=100] plot ({(1.0 + 0.3 * sin(180 * (0.5 + asin(-0.9 + 1.8 * (\x / 100)) / asin(0.9) / 2))) * cos(90 - (45 + \x * 1.8))}, {(1.0 + 0.3 * sin(180 * (0.5 + asin(-0.9 + 1.8 * (\x / 100)) / asin(0.9) / 2))) * sin(90 - (45 + \x * 1.8))});
\draw [-,>=latex,domain=0:100,samples=100] plot ({(1.0 + 0.7 * sin(180 * (0.5 + asin(-0.9 + 1.8 * (\x / 100)) / asin(0.9) / 2))) * cos(-45 - (0.0 + \x * 4.95))}, {(1.0 + 0.7 * sin(180 * (0.5 + asin(-0.9 + 1.8 * (\x / 100)) / asin(0.9) / 2))) * sin(-45 - (0.0 + \x * 4.95))});
\end{tikzpicture}
\quad
\begin{tikzpicture}
  \foreach \x in {1,...,8}
  {
    \node at ({90-(360/8)*(\x-1)}:1) [circle,fill,inner sep=1.5pt] (p\x) {};
    \node at ({90-(360/8)*(\x-1)}:0.7) [draw = none, fill=none] (q\x) {\x};
  }
\node at (1,-0.24) [draw = none, fill=none] (minus) {$-$};
\node at (0.24,-1) [draw = none, fill=none] (plus) {$+$};
\draw [-,>=latex,domain=0:100,samples=100] plot ({(1.0 + 0.3 * sin(180 * (0.5 + asin(-0.9 + 1.8 * (\x / 100)) / asin(0.9) / 2))) * cos(90 - (45 + \x * 1.8))}, {(1.0 + 0.3 * sin(180 * (0.5 + asin(-0.9 + 1.8 * (\x / 100)) / asin(0.9) / 2))) * sin(90 - (45 + \x * 1.8))});
\draw [-,>=latex,domain=0:100,samples=100] plot ({(1.0 + 0.7 * sin(180 * (0.5 + asin(-0.9 + 1.8 * (\x / 100)) / asin(0.9) / 2))) * cos(-45 + (0.0 + \x * 2.25))}, {(1.0 + 0.7 * sin(180 * (0.5 + asin(-0.9 + 1.8 * (\x / 100)) / asin(0.9) / 2))) * sin(-45 + (0.0 + \x * 2.25))});
\draw [-,>=latex,domain=0:100,samples=100] plot ({(1.0 + 1 * sin(180 * (0.5 + asin(-0.9 + 1.8 * (\x / 100)) / asin(0.9) / 2))) * cos(90 - (0.0 + \x * 6.75))}, {(1.0 + 1 * sin(180 * (0.5 + asin(-0.9 + 1.8 * (\x / 100)) / asin(0.9) / 2))) * sin(90 - (0.0 + \x * 6.75))});
\end{tikzpicture}
\end{center}
Here the numbers $1, 2, \dots n$ are arranged in order around a circle. In such diagrams, a curve travelling $k$ times clockwise connecting $i <j$ means that $c_j = c_i - kn$, and a curve travelling $k-1$ times anticlockwise connecting $i<j$ means that $c_j = c_i + kn$. The examples above are the affine $(4,4)$-clans with $(c_1, c_2, c_3, c_4, c_5, c_6, c_7, c_8) = (1, 2, +, 3, -, 2, 2-8, 1+8)$ and $(1, 2, -, 3, +, 2, 2+8, 1-8)$ respectively. One can think of affine $(p,q)$-clans as involutions in the \defn{affine symmetric group} with $+$ or $-$ signs assigned to the fixed points.


Our main theorem is the following.

\begin{theorem}\label{main-thm-lattice}
There is a natural bijection between $K$-orbits in $G/B$ to affine $(p,q)$-clans.
\end{theorem}
There is a natural bijection between the two common notions of affine flag variety: $G/B$ and the space $\widetilde{Fl}_n$, consisting of all \defn{affine flags} of lattices $\Lambda_{\bullet} = (\Lambda_1 \supset \Lambda_2 \supset \dots \supset \Lambda_n)$ such that all $\Lambda_i$'s are full-rank $\KPt$-submodules in $\KLt^n$, $\Lambda_n \supset t \Lambda_1$ and $\dim_{\K}(\Lambda_j / \Lambda_{j+1}) = 1$ for $j \in [1,n-1]$. See Section~\ref{affine-flag-variety}.
In this work, we prove the equivalent statement that there is an explicit bijection between the orbits of $K = \GL_p(\KLt) \times \GL_q(\KLt)$ in the affine flag variety $\widetilde{Fl}_n$ and affine $(p,q)$-clans, by considering the properties of left multiplication of elements in $K$ on the affine flags.

%
An explicit map from an affine flag $\Lambda_\bullet$ to an affine $(p,q)$-clan $c(\Lambda_{\bullet})$ is given in Definition~\ref{alg-affine-flag-to-affine-clan}. By Proposition~\ref{affine-flag-to-affine-clan}, affine flags in the same $K$-orbit are mapped to the same affine $(p,q)$-clan. Therefore, the map in Definition~\ref{alg-affine-flag-to-affine-clan} induces a map from the $K$-orbits in $\widetilde{Fl}_n$ to the set of affine $(p,q)$-clans.
In the proof of Proposition~\ref{affine-clan-to-affine-flag}, an explicit inverse map from the set of affine $(p,q)$-clans to the $K$-orbits in $\widetilde{Fl}_n$ is given.

Presenting distinct orbit representatives in terms of affine $(p,q)$-clans is combinatorial in nature, hence accessible for future work. For instance, a possible weak order on affine $(p,q)$-clans can be readily developed, similar to the classical case in \cite{CJW, Richardson90}. Another possible application is that the distinct orbit representatives could provide explicit examples of \defn{affine Lusztig-Vogan modules} and positivity of the \defn{affine Kazhdan-Lusztig-Vogan polynomials} defined in~\cite{CY}. Studying the orbit closures also has applications to relative Langlands duality, as described in~\cite{CY}.



\section*{Acknowledgment}
We thank Peter Magyar, Eric Marberg, and Yongchang Zhu for many useful comments and discussions.

\section{Preliminaries}\label{chpt2-ref}

In this section, we introduce the necessary notations and preliminaries for this paper, focusing on the concept of loop groups and the affine flag variety. These topics have various approaches, but Magyar's work~\cite{Magyar} provides a comprehensive exposition on them. For more detailed information, refer to~\cite{Kumar, PS}.
\subsection{Loop groups}
As before, let $\K$ be a field of characteristic not equal to two.
The notions of the field of \defn{formal Laurent series} $F = \KLt$, the ring of \defn{formal power series} $A = \KPt$, the \defn{(algebraic) loop group} of $\GL_n(\K)$, namely $G = \GL_n(F)$, and the \defn{Iwahori subgroup} $B \subset \GL_n(A)$ consisting of upper triangular matrices modulo $t$, are the same as in the introduction.

In this paper let 
\begin{align*} 
K &= \left\{ \begin{pmatrix} k_1 & 0 \\ 0 & k_2 \end{pmatrix} : k_1 \in \GL_p(F), k_2 \in \GL_q( F) \right\} \\
& \cong \GL_p(F) \times \GL_q(F).
\end{align*}
This group is the \defn{(algebraic) loop group} of $\GL_p(\K) \times \GL_q(\K)$.

\subsection{Affine flag variety} \label{affine-flag-variety}

Let $V = F^n$ with $G = \GL_n(F)$ acting on $V$ by left multiplication. Let $e_1, e_2, \dots, e_n$ denote the standard $F$-basis of $V$. For $c \in \ZZ$, we define $e_{i+nc} := t^ce_i$. Every element of $V$ can be uniquely written as an infinite $\K$-linear combination of $\{e_i\}_{i \in \ZZ}$, allowing an infinite number of terms with positive indices, but only allowing finitely many terms with negative indices.


An \defn{$A$-lattice} $\Lambda \subset V$ is an $A$-submodule given by $\Lambda = A v_1\oplus \cdots \oplus A v_n$, where ${v_1, \dots, v_n}$ is an $F$-basis of $V$. Each element of an $A$-lattice $\Lambda$ can be uniquely written as a formal $\K$-linear combination of the elements $v_i$ for $i\geq 1$, possibly involving infinitely many terms, where $v_{i+nc} := t^cv_i$.

The \defn{complete affine flag variety} $\widetilde{Fl}_n$ is defined as the space of all \defn{affine flags} of lattices $\Lambda_{\bullet} = (\Lambda_1 \supset \Lambda_2 \supset \dots \supset \Lambda_n)$ such that $\Lambda_n \supset t \Lambda_1$ and $\dim_{\K}(\Lambda_j / \Lambda_{j+1}) = 1$ for $j \in [1,n-1]$. 

\begin{example}
Define a family of $A$-lattices for $j \in \ZZ$:
\[ E_j :=  \text{span}_{A} \{ e_j, e_{j+1}, \dots, e_{j+n-1} \}. \]
Notice that $E_j = \sigma^jE_1$, where 
$ \sigma = \begin{psmallmatrix}  0 &0 & \cdots & 0 &t\\
				1 & 0 & \cdots & 0 &0\\
				0 & 1 & \cdots&0&0\\
				\vdots &\vdots &\ddots&\vdots & \vdots\\
				0 & 0 & \cdots & 1 & 0  \end{psmallmatrix}.$
The flag of lattices $(E_1 \supset E_2 \supset \dots \supset E_n)$ is referred to as the \defn{standard flag}.
\end{example}

The affine flag variety has been studied extensively for its relation with representation theory. Some examples of  applications are found in \cite{lam2021back, lee2019combinatorial, Nadler04}. A more extensive list of references can be found in Section 1.5 of \cite{tong2024orthogonalsymplecticorbitsaffine}.

The group $G$ acts on the affine flags $\Lambda_{\bullet}$ by $g\Lambda = (g\Lambda_1,g \Lambda_2, \dots, g\Lambda_n)$. Under the induced action on the affine flags modulo $t$, the stabilizer of $E_{\bullet}$ modulo $t$ is the group of lower triangular matrices in $\GL_n(\K)$. 
Therefore, under the action on the affine flags, the stabilizer of the standard flag $E_{\bullet}$ is the \defn{opposite Iwahori subgroup} $B'_n$ consisting of the matrices $b \in \GL_n(A)$ that are lower-triangular modulo $t$. Therefore, we have a bijection between $\widetilde{Fl}_n$ and $G/B'_n$ by the orbit-stabilizer theorem.

The following proposition shows that the elements of the affine flag variety are parametrized by ordered $F$-bases of $V$.

\begin{proposition}[{\cite[\S1.1]{Magyar}}]\label{affflagbasis}
For each affine flag $\Lambda_{\bullet} = (\Lambda_1 \supset \Lambda_2 \supset \dots \supset \Lambda_n)$, we can choose a vector $v_i \in \Lambda_i \setminus \Lambda_{i+1}$ for $1 \leq i \leq n-1$. Then the following properties hold:
\ben
\item[(i)] $\dim_{\K} (\Lambda_n / t\Lambda_1 ) = 1$, and so we can also select a vector $v_n \in \Lambda_n \setminus t\Lambda_1$.
\item[(ii)] The vectors $v_1, v_2, \dots, v_n$ form an $F$-basis of $F^n$.
\item[(iii)]  For $1 \leq i \leq n$, we have $\Lambda_i = \text{span}_{A}\{v_i, v_{i+1}, \dots, v_n, tv_1, \dots, tv_{i-1}\}$.
\een
\end{proposition}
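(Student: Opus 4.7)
The plan is to work from the top down: first pin down (i) by a clean dimension count, then use (i) together with Nakayama's lemma to get (ii), and finally deduce (iii) by rewriting $t\Lambda_1$ in terms of the $A$-basis obtained in (ii).

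For (i), I would observe that $\Lambda_1$ is a free $A$-module of rank $n$, so $\Lambda_1/t\Lambda_1 \cong (A/tA)^n \cong \K^n$ has $\K$-dimension $n$. Since $t\Lambda_1 \subset \Lambda_n \subset \Lambda_{n-1} \subset \cdots \subset \Lambda_1$, we can compute
\[
n = \dim_\K(\Lambda_1/t\Lambda_1) = \sum_{j=1}^{n-1} \dim_\K(\Lambda_j/\Lambda_{j+1}) + \dim_\K(\Lambda_n/t\Lambda_1) = (n-1) + \dim_\K(\Lambda_n/t\Lambda_1),
\]
which forces $\dim_\K(\Lambda_n/t\Lambda_1) = 1$.

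For (ii), the key observation is that for each $i \leq n-1$, the $A$-action on $\Lambda_i/\Lambda_{i+1}$ factors through $A/tA = \K$: indeed $t\Lambda_i \subset t\Lambda_1 \subset \Lambda_n \subset \Lambda_{i+1}$. Since the quotient is $1$-dimensional over $\K$ and $v_i$ is a nonzero element, $\bar v_i$ generates $\Lambda_i/\Lambda_{i+1}$, i.e.\ $\Lambda_i = A v_i + \Lambda_{i+1}$. By an entirely analogous argument using (i), $\Lambda_n = A v_n + t\Lambda_1$. Iterating,
\[
\Lambda_1 = A v_1 + A v_2 + \cdots + A v_n + t\Lambda_1,
\]
so the images $\bar v_1,\dots,\bar v_n$ span the $n$-dimensional $\K$-vector space $\Lambda_1/t\Lambda_1$. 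Nakayama's lemma (applied to the local ring $A$ with maximal ideal $tA$) then shows that $v_1,\dots,v_n$ generate $\Lambda_1$ as an $A$-module; since $\Lambda_1$ is free of rank $n$, they form an $A$-basis, hence an $F$-basis of $V = F \otimes_A \Lambda_1$.

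For (iii), once $v_1,\dots,v_n$ is an $A$-basis of $\Lambda_1$, we have $t\Lambda_1 = \sum_{j=1}^n A\, tv_j$. Plugging this into the iterated identity $\Lambda_i = A v_i + A v_{i+1} + \cdots + A v_n + t\Lambda_1$ and absorbing $A\, tv_j \subset A v_j$ for $j \geq i$ yields
\[
\Lambda_i = A v_i + \cdots + A v_n + A\, tv_1 + \cdots + A\, tv_{i-1} = \mathrm{span}_A\{v_i,\ldots,v_n,tv_1,\ldots,tv_{i-1}\},
\]
as desired. The only part that required any real thought is setting up the $\K$-structure on each quotient so that Nakayama applies cleanly; that is the step I would expect to write most carefully, but it reduces to the inclusion $t\Lambda_i \subset \Lambda_{i+1}$, which is immediate from the hypothesis $t\Lambda_1 \subset \Lambda_n$.
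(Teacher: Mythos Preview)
Your argument is correct. The paper does not supply its own proof of this proposition; it is simply quoted from Magyar \cite{Magyar}, so there is nothing to compare against. Your dimension count for (i), the Nakayama argument for (ii), and the deduction of (iii) are all standard and sound; the only point worth a remark is that ``$n$ generators of a free $A$-module of rank $n$ form a basis'' is being used implicitly, which is fine since $A$ is a local PID.
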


\begin{remark}\label{gmod-two-b-bij-affine-flag}
There are natural bijections $G/B \to \widetilde{Fl}_n$ and $G/B' \to \widetilde{Fl}_n$, where $B$ and $B'$ are the Iwahori subgroups of matrices in $G$ that are respectively upper and lower triangular modulo $t$. 
The bijective map between $G/B$ and $\widetilde{Fl}_n$ is as follows:
for $gB \in G/B$, let $g_i$ be the $(n+1-i)$-th column of $g$.
The bijective map is
\begin{equation}\label{gmodb-bij-affine-flag}
gB \mapsto \Lambda_{\bullet}(g),
\end{equation}
where for $i \in [n]$, $\Lambda_{i}(g) = \text{span}_{A}\{g_i, g_{i+1}, \dots, g_n, tg_1, tg_2, \dots, tg_{i-1}\}$.

On the other hand, the bijective map between $G/B'$ and $\widetilde{Fl}_n$ is as follows: setting $g_i$ to be the $i$-th column of $g$, the bijective map is $gB' \mapsto \Lambda_{\bullet}(g)$, where $\Lambda_{i}(g) = \text{span}_{A}\{g_i, g_{i+1}, \dots, g_n, tg_1, tg_2, \dots, tg_{i-1}\}$.
\end{remark}

\section{Orbits of the product group $\GL_p \times \GL_q$}\label{chpt4-ref}
%
%

\subsection{$K$-invariants in affine flags}\label{sect4.4-ref}

Inspired by \cite{Yamamoto}, we discuss in this subsection some $K$-invariants of the dimensions of certain vector subspaces related to the affine flag under the $K$-action.

Recall that an \defn{affine $(p,q)$-clan} is a $\ZZ$-indexed sequence $c = (c_i)_{i \in \ZZ}$ with $n = p+q$
such that 
\ben
\item[(i)]each $c_i$ is either $+$, $-$ or an integer.
\item[(ii)]for $k \in \ZZ$, $c_{i+kn} = c_{i}+kn$ if $c_i$ is an integer and $c_{i+n} = c_i$ if $c_i$ is $+$ or $-$,
\item[(iii)] $\#\{ i \in [n] : c_i = +\} - \#\{ i \in [n] : c_i = -\} = p-q$,
and every integer, if it appears, appears exactly twice in the sequence.
\een

We do not distinguish between affine $(p,q)$-clans that are equivalent as described in Definition~\ref{affine-clan-def}.
Notice that by definition, an affine $(p,q)$-clan $c = (\dots,c_1,c_2,c_3,\dots)$ is uniquely determined by $(c_1, c_2, c_3, \dots c_n)$.

Every equivalence class of affine $(p,q)$-clans has a \defn{standard numbering} $\ZZ \cup \{+, - \}$-indexed sequence $c = (\dots,c_1,c_2,c_3,\dots)$, satisfying the following:
\ben
\item[(i)] If $i<j$, $i \in [n]$ and $c_i = c_j = a \in \ZZ$, then $c_i \in [n]$.
\item[(ii)] If $i<j$, $j \in [n]$, $i <1$ and $c_i = c_j = a \in \ZZ$, then $c_j \in [n]$.
\item[(iii)] For $i \in [n]$, if $c_i \in \ZZ$ and $c_1, c_2, \dots, c_{i-1} \in \{+, - \}$, then $c_i = 1$.
\item[(iv)] For $i,j \in [n]$, if $c_i, c_j \in \ZZ$, $c_{i+1}, c_{i+2}, \dots, c_{j-1} \in \{+, - \}$ and $c_j \neq c_{i'}$ for some $1\leq i' < i$, then $c_j = c_i + 1$. 
\een

For example, for the case $n = 8$ and $p=5$, $q=3$, $c = (\dots, +, 1, -, 2, 1, +, 18, +,  \dots )$ is the standard numbering of its equivalence class, while $d = (\dots, +, 2, -, -15 , 2, +, 1, +,  \dots )$ is equivalent to $c$, but not a standard numbering.


Let $V_+$ and $V_-$ be the subspaces in $V = \KLt^n$ consisting of the vectors in $\KLt^n$ with the last $q$ coordinates equal to zero and the first $p$ coordinates equal to zero, respectively. Define $\pi_+$ and $\pi_-$ to be the projection maps onto $V_+$ and $V_-$ respectively from the decomposition $V = V_+ \oplus V_-$.

We now define some important $K$-invariant data for any affine flag.

\begin{definition}\label{affine-dim-inv-def}
For an affine flag $\Lambda_{\bullet} = (\Lambda_1 \supset \Lambda_2 \dots \supset \Lambda_n)$, and $i \in \ZZ$, define,
\begin{align*}
(i; +) &= \dim_{\K} ((\Lambda_i \cap V_+) / (\Lambda_{i+1} \cap V_+)),\\
(i; -) &= \dim_{\K} ((\Lambda_i \cap V_-) / (\Lambda_{i+1} \cap V_-)),\\
(i; \NN) &= \dim_{\K} (\Lambda_i /((\Lambda_i \cap V_+) \oplus (\Lambda_i \cap V_-))),
\end{align*}
and for $i, j \in \ZZ$ satisfying $i\leq j$ and $i\leq n+1$, define
\begin{align*}
(i;j) & =  \dim_{\K}( (\pi_+(\Lambda_j) + \Lambda_i)/t\Lambda_1 ).
\end{align*}
\end{definition}

First we have to make sure the above quantities are finite non-negative integers.

\begin{lemma}\label{across-pos-neg-finite-dim}
The quantities $(i; \NN) = \dim_{\K} (\Lambda_i /((\Lambda_i \cap V_+) \oplus (\Lambda_i \cap V_-)))$ are finite. 
\begin{proof}
By Proposition \ref{affflagbasis} we can write 
\[\Lambda_i = \text{span}_{A} \{v_i, v_{i+1} \dots, v_n, tv_1, tv_2, \dots, tv_{i-1}\},\]
where $v_1, v_2, \dots, v_n$ is an $F$-basis for $F^n$. Therefore for each $j \in [n]$, there exists a minimum $N_j \in \ZZ$ such that 
\[t^{N_j} e_j = a_{ji} v_i + a_{j(i+1)}v_{i+1} + \dots + a_{jn}v_n + a_{j1}tv_1 + \dots + a_{j(i-1)}tv_{i-1},\]
 where $a_{j1}, a_{j2}, \dots, a_{jn} \in A$. In other words, for $M \geq N_j$, it holds that $t^Me_j \in \Lambda_i \cap V_+$ for $1\leq j \leq p$, while it holds that $t^Me_j \in \Lambda_i \cap V_-$ for $p+1 \leq j \leq n$. Therefore if $M_j$ is the smallest order of the $j$-th entry of vectors in $\Lambda_i$, the maximum dimension of $\dim_{\K} (\Lambda_i /((\Lambda_i \cap V_+) \oplus (\Lambda_i \cap V_-)))$ is $\sum^{N}_{j=1} (N_j-M_j)$, which is finite.
\end{proof}
\end{lemma}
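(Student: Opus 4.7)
The plan is to identify the quotient $\Lambda_i/((\Lambda_i \cap V_+) \oplus (\Lambda_i \cap V_-))$ with a quotient of two $A$-lattices inside the finite-dimensional $F$-vector space $V_+$, and then invoke the structure theorem for finitely generated modules over the discrete valuation ring $A = \KPt$.

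First I would set $M_\pm := \Lambda_i \cap V_\pm$ and $L_+ := \pi_+(\Lambda_i) \subseteq V_+$. The projection $\pi_+ \colon \Lambda_i \twoheadrightarrow L_+$ has kernel $\Lambda_i \cap V_- = M_-$, producing an isomorphism $\Lambda_i / M_- \xrightarrow{\sim} L_+$ under which $(M_+ \oplus M_-)/M_-$ corresponds to $\pi_+(M_+) = M_+$. Thus
\[
\Lambda_i/(M_+ \oplus M_-) \;\cong\; L_+/M_+,
\]
reducing the lemma to showing the right-hand side is finite-dimensional over $\K$.

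Next I would verify that both $M_+$ and $L_+$ are $A$-lattices in $V_+$. Since $\Lambda_i$ is finitely generated over the PID $A$, so are its quotient $L_+$ and its submodule $M_+$. To see that $M_+$ spans $V_+$ over $F$, pick any $v \in V_+$: using the $A$-basis of $\Lambda_i$ from Proposition~\ref{affflagbasis}, we can write $v$ as an $F$-linear combination of those basis vectors, so $t^N v \in \Lambda_i$ for some $N \geq 0$; since $t^N v \in V_+$ as well, this gives $t^N v \in M_+$. The same bound shows $L_+$ spans $V_+$.

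Finally, since $M_+ \subseteq L_+$ are two $A$-lattices in the same finite-dimensional $F$-vector space, they are commensurable, and $L_+/M_+$ is a finitely generated torsion module over the DVR $A$. By the structure theorem it decomposes as a finite direct sum $\bigoplus_j A/t^{a_j}A$, each summand having $\K$-dimension $a_j < \infty$, so $L_+/M_+$ (and hence $\Lambda_i/(M_+ \oplus M_-)$) has finite $\K$-dimension. The only non-formal step is verifying that $M_+$ spans $V_+$, but this is immediate from the lattice property of $\Lambda_i$, so there is no real obstacle.
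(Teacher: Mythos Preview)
Your proof is correct and takes a genuinely different, more structural route than the paper. The paper argues by coordinates: it picks the standard $A$-basis of $\Lambda_i$, finds for each $j$ the minimal $N_j$ with $t^{N_j}e_j \in \Lambda_i$ and the minimal order $M_j$ appearing in the $j$-th coordinate of $\Lambda_i$, and bounds the dimension by $\sum_j(N_j-M_j)$. Your argument instead identifies the quotient with $\pi_+(\Lambda_i)/(\Lambda_i\cap V_+)$ via the first isomorphism theorem and then appeals to the elementary-divisor form of two nested full-rank lattices over the DVR $A$. This is cleaner and more conceptual; in fact the isomorphism $\Lambda_i/(M_+\oplus M_-)\cong \pi_+(\Lambda_i)/(\Lambda_i\cap V_+)$ you establish here is exactly what the paper proves later in Lemma~\ref{affine-proj-dim} and uses again in Lemma~\ref{across-pos-neg-diff-formula}. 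The paper's coordinate approach has the minor advantage of giving an explicit numerical bound without invoking the structure theorem, but your version better explains \emph{why} the dimension is finite and foreshadows the later lemmas.
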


\begin{lemma} \label{lattice-intersect-free}
For $i \in \ZZ$, the $A$-modules $\Lambda_i \cap V_+$ and $\Lambda_i \cap V_-$ are free $A$-modules of rank $p$ and $q$ respectively.
\end{lemma}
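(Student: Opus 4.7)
The plan is to combine three standard facts: (i) the Noetherian property of $\Lambda_i$ to get finite generation, (ii) the PID structure of $A = \KPt$ to upgrade finite generation to freeness, and (iii) localization at $t$ (i.e.\ tensoring with $F$) to compute the rank. No previous lemma is needed other than the fact that $\Lambda_i$ is itself an $A$-lattice of rank $n$.

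First, I would recall that $\Lambda_i$ is a free $A$-module of rank $n$. Since $A = \KPt$ is a Noetherian ring, $\Lambda_i$ is a Noetherian $A$-module, so every $A$-submodule of $\Lambda_i$ is finitely generated. In particular the submodules $\Lambda_i \cap V_+$ and $\Lambda_i \cap V_-$ are finitely generated $A$-modules. Next, since $A$ is a discrete valuation ring (hence a PID) and these modules sit inside the $F$-vector space $V = F^n$, they are torsion-free; thus by the structure theorem for finitely generated modules over a PID, they are free of some finite ranks $r_+$ and $r_-$.

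To pin down $r_+ = p$ and $r_- = q$, I would tensor with $F$ over $A$. Because $\Lambda_i$ is an $A$-lattice in $V$, for any $v \in V_+$ there is some $N \geq 0$ with $t^N v \in \Lambda_i$; since $V_+$ is an $F$-subspace we also have $t^N v \in V_+$, so $t^N v \in \Lambda_i \cap V_+$, and therefore $v = t^{-N}(t^N v)$ lies in $(\Lambda_i \cap V_+) \otimes_A F$. This shows $(\Lambda_i \cap V_+) \otimes_A F = V_+$, and since $V_+ \cong F^p$ as an $F$-vector space, the rank of $\Lambda_i \cap V_+$ over $A$ is exactly $p$. The same argument applied to $V_-$ gives rank $q$ for $\Lambda_i \cap V_-$.

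The only potential subtlety I foresee is confirming that $V_+$ and $V_-$ really are $F$-subspaces (not merely $\K$-subspaces) of $V$; this is clear from their definition as the span of a subset of the standard basis, since then multiplication by $t$ preserves each of them. Given that, every step above is routine commutative algebra, so this lemma should be short.
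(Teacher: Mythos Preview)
Your proof is correct and follows essentially the same approach as the paper: freeness from the PID structure of $A$, and the rank computation via the observation that $t^N e_j \in \Lambda_i$ for large $N$. The only cosmetic difference is that you package the rank argument as a localization $(\Lambda_i \cap V_+)\otimes_A F = V_+$, whereas the paper sandwiches $\Lambda_i \cap V_+$ between two explicit free $A$-modules of rank $p$; your version is slightly slicker but the underlying idea is identical.
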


\begin{proof}
Notice that $A$ is a discrete valuation ring, and hence a principal ideal domain.  
By \cite[Theorem 9.8]{rotman2010advanced},
every submodule of a finite-dimensional free $A$-module is free. Now notice that since $\Lambda_i$ is a free $A$-module of rank $n$, the $A$-submodules $\Lambda_i \cap V_+$ and  $\Lambda_i \cap V_-$ of $\Lambda_i$ are both free.

It remains to show that the ranks of $\Lambda_i \cap V_+$ and $\Lambda_i \cap V_-$ are $p$ and $q$ respectively. We now prove that $\Lambda_i \cap V_+$ has rank $p$. The proof for the rank of $\Lambda_i \cap V_-$ being $q$ is similar. 

Notice that since $\Lambda_i$ has a basis that is also an $F^n$ basis, each $e_j$ for $1\leq j \leq n$ is an $F$-linear combination of the basis elements of $\Lambda_i$. Therefore for each $e_j$, there is a large enough integer $N_j$  such that $t^{N_j} e_j \in \Lambda_i$, and hence 
\[\text{span}_{A}\{t^{N_1} e_1, t^{N_2} e_2 , \dots, t^{N_p} e_p\} \subset \Lambda_i \cap V_+\] 
as a submodule. It follows that  $\Lambda_i \cap V_+$ has rank at least $p$.

On the other hand, write $\Lambda_i = \text{span}_{A} \{v_i, v_{i+1} \dots, v_n, tv_1, tv_2, \dots, tv_{i-1}\}$, and for $1\leq j \leq n$, set $M_j \in \ZZ$ to be the smallest order of the $j$-th entry in vectors $\{v_i, v_{i+1} \dots, v_n, tv_1, tv_2, \dots, tv_{i-1}\}$. Notice that this is also the smallest order of the $j$-th entry in all the vectors in $\Lambda_i$. We have
\[ \Lambda_i \cap V_+ \subset \text{span}_{A}\{t^{M_1} e_1, t^{M_2} e_2 , \dots, t^{M_p} e_p\}, \]
since every element in $\Lambda_i \cap V_+$ is an $A$-linear combination of $t^{M_1} e_1, t^{M_2} e_2 , \dots, t^{M_p} e_p$. Therefore $\Lambda_i \cap V_+$ has rank at most $p$, and hence equal to $p$.
\end{proof}

\begin{lemma} \label{plus-minus-takes-01}
For each $i \in \ZZ$, it holds that $(i;+)$, $(i;-) \in \{0, 1\}$.
\end{lemma}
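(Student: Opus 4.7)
The plan is to derive both bounds from the defining property that consecutive quotients $\Lambda_i/\Lambda_{i+1}$ are one-dimensional over $\K$, combined with the obvious functoriality of intersecting with a fixed subspace.

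First, extend the flag $\ZZ$-periodically via $\Lambda_{i+n} := t\Lambda_i$, so that $\Lambda_i \supset \Lambda_{i+1}$ and $\dim_\K(\Lambda_i/\Lambda_{i+1}) = 1$ hold for every $i \in \ZZ$ (this is consistent with $\Lambda_n \supset t\Lambda_1$ having one-dimensional quotient, since it follows from $\dim_{\K}(\Lambda_n/t\Lambda_1)=1$, as $\prod\dim(\Lambda_i/\Lambda_{i+1})$ for $i=1,\ldots,n$ together with $\dim(\Lambda_n/t\Lambda_1)$ must equal $n$). Then for each $i$, consider the natural $\K$-linear map
\[
\varphi_+ \colon (\Lambda_i \cap V_+) / (\Lambda_{i+1} \cap V_+) \longrightarrow \Lambda_i/\Lambda_{i+1}
\]
induced by the inclusion $\Lambda_i \cap V_+ \hookrightarrow \Lambda_i$ followed by the projection $\Lambda_i \twoheadrightarrow \Lambda_i/\Lambda_{i+1}$.

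The key observation is that $\varphi_+$ is injective: its kernel consists of elements $v \in \Lambda_i \cap V_+$ whose image in $\Lambda_i/\Lambda_{i+1}$ is zero, i.e.\ $v \in \Lambda_{i+1}$. Since $v$ already lies in $V_+$, this forces $v \in \Lambda_{i+1} \cap V_+$, which is precisely what we quotient out. Thus $\varphi_+$ embeds $(\Lambda_i \cap V_+)/(\Lambda_{i+1} \cap V_+)$ into a $1$-dimensional $\K$-vector space, giving $(i;+) \leq 1$. Since $(i;+)$ is a non-negative integer (indeed, both $\Lambda_i \cap V_+$ and $\Lambda_{i+1} \cap V_+$ are free $A$-modules of rank $p$ by Lemma~\ref{lattice-intersect-free}, and the quotient is finitely generated over $\K$), we conclude $(i;+) \in \{0,1\}$.

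The argument for $(i;-)$ is identical, replacing $V_+$ by $V_-$ throughout. The only step that required a moment of thought was confirming that $\varphi_+$ has kernel exactly $\Lambda_{i+1} \cap V_+$, which followed immediately from the set-theoretic identity $(\Lambda_i \cap V_+) \cap \Lambda_{i+1} = \Lambda_{i+1} \cap V_+$ (since $\Lambda_{i+1} \subset \Lambda_i$). No further input is needed; the proof is essentially a one-line observation once the injection into the $1$-dimensional quotient $\Lambda_i/\Lambda_{i+1}$ is written down.
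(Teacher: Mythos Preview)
Your proof is correct and considerably slicker than the paper's. The paper proceeds by writing $\Lambda_i \cap V_+$ explicitly as $\text{span}_A\{w_1,\dots,w_p\}$ (using Lemma~\ref{lattice-intersect-free}), expanding each $w_j$ in the $A$-basis $\{v_i,\dots,v_n,tv_1,\dots,tv_{i-1}\}$ of $\Lambda_i$, and then doing a row-reduction argument on the coefficients of $v_i$ to show that after a change of generators at most one $w_j$ lies outside $\Lambda_{i+1}$. Your argument bypasses all of this by noting the natural injection $(\Lambda_i\cap V_+)/(\Lambda_{i+1}\cap V_+)\hookrightarrow \Lambda_i/\Lambda_{i+1}$, which immediately bounds the dimension by $1$. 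Your route is more conceptual and does not even require Lemma~\ref{lattice-intersect-free} or finiteness in advance; the paper's approach, while longer, has the side benefit of producing an explicit generator $w_j$ witnessing the nontrivial coset when $(i;+)=1$, which is in the spirit of the hands-on constructions used elsewhere in Section~\ref{sect4.3-ref}. One cosmetic slip: in your parenthetical about $\dim_\K(\Lambda_n/t\Lambda_1)=1$ you wrote $\prod$ where you meant a sum, but this fact is already Proposition~\ref{affflagbasis}(i) and need not be rederived.
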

\begin{proof}
We show this fact for $(i;+)$ only. The proof for $(i;-) \in \{0, 1\}$ is similar. 

By Lemma~\ref{lattice-intersect-free}, we can write $\Lambda_i \cap V_+ = \text{span}_{A}\{w_1, w_2, \dots w_p \}$ for some $w_1, w_2, \dots w_p \in F^n$. 
For $j \in [1,p]$, we have 
\begin{align*}
w_j &= a_{ji} v_i + a_{j(i+1)}v_{i+1} + \dots + a_{jn}v_n + a_{j1}tv_1 + \dots + a_{j(i-1)}tv_{i-1} \\
&= a_{j(i+1)}v_{i+1} + \dots + a_{jn}v_n + a_{j1}tv_1 + \dots + a_{j(i-1)}tv_{i-1} + (t^{-1}a_{ji})(tv_i),
\end{align*} 
where $a_{j1}, a_{j2}, \dots, a_{jn} \in A$. 

If none of the $a_{ji}$'s are in $A^*$ for $j \in [1,p]$, then all $a_{ji} \in tA$. Therefore $t^{-1}a_{ji} \in A$ for all $j \in [1,p]$, and hence $\Lambda_i \cap V_+ = \text{span}_{A}\{w_1, w_2, \dots w_p \} \subset \Lambda_{i+1} \cap V_+ \subset \Lambda_i \cap V_+$. It follows that $\Lambda_i \cap V_+ = \Lambda_{i+1} \cap V_+$ and $(i;+) = \dim_{\K} ((\Lambda_i \cap V_+) / (\Lambda_{i+1} \cap V_+)) = 0$.

Otherwise suppose there are $j$'s such that $a_{ji} \in A^*$. If there are two or more $j$'s such that $a_{ji} \in A^*$, pick a $j$, say $j_1 \in [1, p]$, and eliminate the constant terms of the coefficients of $v_i$ in other such $j$'s by subtracting a non-zero $\K$-multiple of $w_{j_1}$. 
Therefore without loss of generality we can assume that there is only one $w_j$ within $w_1, w_2, \dots w_p$ such that $a_{ji} \in A^*$ in the above expansions as $A$-linear combinations of $\{v_i, v_{i+1}, \dots, v_n, tv_1, tv_2, \dots, tv_{i-1}\}$. 
In particular for this $w_j$, it holds that $w_j \notin \Lambda_{i+1} \cap V_+$. 
On the other hand, we have $tw_j \in \Lambda_{i+1} \cap V_+$, since
\begin{align*}
tw_j &= ta_{j(i+1)}v_{i+1} + \dots + ta_{jn}v_n + ta_{j1}tv_1 + \dots + ta_{j(i-1)}tv_{i-1} + (a_{ji})(tv_i),
\end{align*} 
while $w_1, w_2, \dots, w_{j-1}, w_{j+1}, \dots w_p \in \Lambda_{i+1} \cap V_+$ by assumption. Therefore,
\begin{align*}
\text{span}_{A}\{w_1, w_2, \dots, w_{j-1}, tw_j, w_{j+1}, \dots, w_p \} &\subset \Lambda_{i+1} \cap V_+ \\
& \subsetneq \Lambda_i \cap V_+ = \text{span}_{A}\{w_1, w_2, \dots w_p \} .
\end{align*}
Observe that, 
\begin{align*}
0 < (i;+) &= \dim_{\K} \( \dfrac{\Lambda_i \cap V_+}{\Lambda_{i+1} \cap V_+} \) \\
& \leq \dim_{\K} \( \dfrac{\text{span}_{A}\{w_1, w_2, \dots w_p \}}{\text{span}_{A}\{w_1, w_2, \dots, w_{j-1}, tw_j, w_{j+1}, \dots, w_p \}} \) = 1.
\end{align*}
Therefore it holds that  $(i;+) = 1$. 
\end{proof}

\begin{lemma}
The quantities $(i;j) = \dim_{\K}((\pi_+(\Lambda_j)+ \Lambda_i)/t\Lambda_1)$ are finite for $i\leq j$ and $i\leq n+1$.
\begin{proof}
Suppose $j = j' + an$, where $1\leq j' \leq n$ and $j', a \in \ZZ$. 
Write 
\[\Lambda_{j'} = \text{span}_{A}\{v_{j'}, v_{j'+1}, \dots, v_n, tv_1, tv_2, \dots, tv_{j'-1}\}.\]
Then
\[\Lambda_j = t^a \Lambda_{j'} = \text{span}_{A}\{t^a v_{j'}, t^a v_{j'+1}, \dots, t^a v_n, t^{a+1} v_1, t^{a+1} v_2, \dots, t^{a+1} v_{j'-1}\},\]
and therefore
\begin{multline*}
\pi_+(\Lambda_j) = \text{span}_{A}\{t^a \pi_+(v_{j'}), t^a \pi_+(v_{j'+1}), \dots, \\
t^a \pi_+(v_n), t^{a+1} \pi_+(v_1), t^{a+1} \pi_+(v_2), \dots,
 t^{a+1} \pi_+(v_{j'-1})\}.
\end{multline*}
Each of the $n$ vectors in the spanning set of $\pi_+(\Lambda_j)$ is an $F$-linear combination of $v_1, v_2, \dots, v_n$, and hence all lie in $\Lambda_m$ for small enough $m \in \ZZ$.
Therefore $\pi_+(\Lambda_i) \subset \Lambda_m$ as a $\K$-vector subspace for small enough $m$, in particular for $m \leq j$. Now we have 
\[ (\pi_+(\Lambda_i)+ \Lambda_j)/t\Lambda_i \subset (\Lambda_m + \Lambda_j)/t\Lambda_i = \Lambda_m/ t\Lambda_i.\]
Since $\dim_{\K}(\Lambda_m/ t\Lambda_i) = n+1-m$, it holds that $(i;j) = \dim_{\K} ((\pi_+(\Lambda_i)+ \Lambda_j)/t\Lambda_i ) \leq n+1-m$, and hence this number is finite.
\end{proof}
\end{lemma}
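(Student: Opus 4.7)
The plan is to leverage Proposition~\ref{affflagbasis} to obtain an explicit generating set of $\pi_+(\Lambda_j)$, and then to bound the whole quantity by the dimension of a quotient of two honest $A$-lattices, which is manifestly finite.

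First, I would pick the $F$-basis $\{v_1, \dots, v_n\}$ of $V$ guaranteed by Proposition~\ref{affflagbasis} and, writing $j = j' + an$ with $1 \leq j' \leq n$, express
\[
\Lambda_j \;=\; t^a\Lambda_{j'} \;=\; \text{span}_{A}\{\,t^av_{j'}, t^av_{j'+1}, \dots, t^av_n,\; t^{a+1}v_1, \dots, t^{a+1}v_{j'-1}\,\}.
\]
Applying the $\K$-linear map $\pi_+$ termwise yields
\[
\pi_+(\Lambda_j) \;=\; \text{span}_{A}\{\,t^a\pi_+(v_{j'}),\, \dots,\, t^a\pi_+(v_n),\, t^{a+1}\pi_+(v_1),\, \dots,\, t^{a+1}\pi_+(v_{j'-1})\,\},
\]
since $\pi_+$ is $A$-linear. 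This is a finitely generated $A$-submodule of $V_+$.

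Next, each of the finitely many generators above is an $F$-linear combination of $v_1, \dots, v_n$, hence lies in $\Lambda_m$ for all sufficiently small (i.e.\ sufficiently negative) integers $m$. Taking the minimum of the finitely many required values of $m$, I obtain one index $m$ for which $\pi_+(\Lambda_j) \subset \Lambda_m$ as an $A$-submodule. By choosing $m \leq i$ if necessary, we also have $\Lambda_i \subset \Lambda_m$, and hence
\[
\pi_+(\Lambda_j) + \Lambda_i \;\subset\; \Lambda_m.
\]
Moreover, for $m$ small enough we also have $t\Lambda_1 \subset \Lambda_m$, so the quotient $\Lambda_m/t\Lambda_1$ makes sense.

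Finally, the quotient $\Lambda_m/t\Lambda_1$ is a finite-dimensional $\K$-vector space: iterating the one-dimensional successive quotients $\dim_\K(\Lambda_k/\Lambda_{k+1}) = 1$ (from the definition of the affine flag variety) from $k=m$ to $k=n$ and using $\dim_\K(\Lambda_n/t\Lambda_1) = 1$ shows $\dim_\K(\Lambda_m/t\Lambda_1) = n+1-m$. Since $(\pi_+(\Lambda_j) + \Lambda_i)/t\Lambda_1$ embeds as a $\K$-subspace of $\Lambda_m/t\Lambda_1$, it follows that
\[
(i;j) \;=\; \dim_{\K}\!\big((\pi_+(\Lambda_j) + \Lambda_i)/t\Lambda_1\big) \;\leq\; n+1-m \;<\; \infty,
\]
as desired. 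The only mild subtlety is ensuring the common index $m$ is chosen small enough to simultaneously contain $\pi_+(\Lambda_j)$, $\Lambda_i$, and $t\Lambda_1$; but since each of these conditions requires only finitely many generators to be absorbed, taking the minimum of finitely many thresholds suffices.
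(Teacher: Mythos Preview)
Your proof is correct and follows essentially the same approach as the paper: write out explicit $A$-generators of $\pi_+(\Lambda_j)$ via the basis from Proposition~\ref{affflagbasis}, absorb them into some $\Lambda_m$ for $m$ sufficiently small, and bound $(i;j)$ by $\dim_\K(\Lambda_m/t\Lambda_1)=n+1-m$. If anything, you are slightly more careful than the paper in explicitly ensuring $m\leq i$ (so that $\Lambda_i\subset\Lambda_m$) and $m\leq n+1$ (so that $t\Lambda_1\subset\Lambda_m$), whereas the paper's version contains a few index slips at this step.
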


Recall that for any affine flag $\Lambda_{\bullet} = (\Lambda_1 \supset \Lambda_2 \supset \dots \supset \Lambda_n)$ and $g \in G$, we have $g\Lambda_{\bullet} = (g\Lambda_1 \supset g\Lambda_2 \supset \dots \supset g\Lambda_n)$.
By noticing that $K$ contains invertible matrices $g$ such that $gV_+ = V_+$, $gV_- = V_-$, we can prove the following statement. 
\begin{lemma} \label{k-inv-quantities}
Suppose that $\Lambda_{\bullet}$ is an affine flag, and $k \in K$.
For $i \in [n]$, $j \in \ZZ$ and $j \geq i$, the numbers $(i; +)$, $(i; -)$, $(i; \NN)$ and $(i;j)$ are the same when defined relative to $\Lambda_{\bullet}$ and $k\Lambda_{\bullet}$.
\begin{proof}
Notice that 
\begin{align*} \dim_{\K} (k\Lambda_i /((k\Lambda_i \cap V_+) \oplus (k\Lambda_i \cap V_-))) &= \dim_{\K} (\Lambda_i /((\Lambda_i \cap k^{-1}V_+) \oplus (\Lambda_i \cap k^{-1}V_-)))\\
& = \dim_{\K} (\Lambda_i /((\Lambda_i \cap V_+) \oplus (\Lambda_i \cap V_-))).
\end{align*} 
So the numbers $(i; \NN)$ are the same when defined relative to $\Lambda_{\bullet}$ and $k\Lambda_{\bullet}$. 

Similarly, 
\begin{align*}
\dim_{\K} ((k\Lambda_i \cap V_+) / (k\Lambda_{i+1} \cap V_+)) &= \dim_{\K} (( \Lambda_i \cap k^{-1} V_+) / (\Lambda_{i+1} \cap k^{-1} V_+)) \\
&= \dim_{\K} ((\Lambda_i \cap V_+) / (\Lambda_{i+1} \cap V_+)).
\end{align*} 
So the numbers $(i;+)$ and similarly $(i;-)$ are the same when defined relative to $\Lambda_{\bullet}$ and $k\Lambda_{\bullet}$.

Finally notice that $\pi_+(\Lambda_j) + \Lambda_i = ((\Lambda_j + V_-)\cap V_+) + \Lambda_i$. We have for $k \in K$,
\begin{align*}
\dim_{\K}( (\pi_+(k\Lambda_j) + k\Lambda_i)/tk\Lambda_1) & = \dim_{\K}( (k\Lambda_j + V_-) \cap V_+ + k\Lambda_i)/tk\Lambda_1) \\
&= \dim_{\K}( (\Lambda_j + k^{-1}V_-) \cap k^{-1}V_+ + \Lambda_i)/t\Lambda_1)\\
&= \dim_{\K}( (\Lambda_j + V_-) \cap V_+ + \Lambda_i)/t\Lambda_1)\\
&=  \dim_{\K}( \pi_+(\Lambda_j) + \Lambda_i)/t\Lambda_1)\\
&= (i;j).
\end{align*}
So the numbers $(i;j)$ are the same when defined relative to $\Lambda_{\bullet}$ and $k\Lambda_{\bullet}$.
\end{proof}
\end{lemma}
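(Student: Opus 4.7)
The plan is to exploit the fact that every element $k \in K = \GL_p(F) \times \GL_q(F)$ is block-diagonal with respect to the decomposition $V = V_+ \oplus V_-$, so that $kV_+ = V_+$ and $kV_- = V_-$. Each of the four quantities in Definition~\ref{affine-dim-inv-def} is built from the lattice $\Lambda_i$ using operations (intersection with $V_\pm$, direct sum, the projection $\pi_+$, and multiplication by $t$) that commute with the action of $k$. So in every case $k$ will descend to a $\K$-linear isomorphism between the relevant quotient spaces for $\Lambda_\bullet$ and for $k\Lambda_\bullet$, and the two $\K$-dimensions will agree.

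For $(i;+)$, I would first note the identity
\[
k\Lambda_i \cap V_+ \;=\; k\Lambda_i \cap kV_+ \;=\; k(\Lambda_i \cap V_+),
\]
and similarly with $\Lambda_{i+1}$ in place of $\Lambda_i$. Since $k$ acts on $V$ as an $F$-linear (hence $\K$-linear) bijection, it induces a $\K$-linear isomorphism
\[
k\;:\;(\Lambda_i \cap V_+)/(\Lambda_{i+1} \cap V_+) \;\xrightarrow{\sim}\; (k\Lambda_i \cap V_+)/(k\Lambda_{i+1} \cap V_+),
\]
so the two $\K$-dimensions coincide. The identical argument with $V_-$ replacing $V_+$ handles $(i;-)$. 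For $(i;\NN)$, the same observation gives
\[
k\bigl((\Lambda_i \cap V_+)\oplus(\Lambda_i \cap V_-)\bigr) \;=\; (k\Lambda_i \cap V_+)\oplus(k\Lambda_i \cap V_-),
\]
so $k$ induces a $\K$-linear isomorphism between $\Lambda_i/\bigl((\Lambda_i\cap V_+)\oplus(\Lambda_i\cap V_-)\bigr)$ and the corresponding quotient for $k\Lambda_i$.

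For $(i;j)$ the main point is that $\pi_+$ commutes with the $K$-action: writing $v = v_+ + v_- \in V_+ \oplus V_-$, we have $kv = kv_+ + kv_-$ with $kv_+ \in V_+$ and $kv_- \in V_-$, so $\pi_+(kv) = kv_+ = k\pi_+(v)$. Consequently
\[
\pi_+(k\Lambda_j) + k\Lambda_i \;=\; k\pi_+(\Lambda_j) + k\Lambda_i \;=\; k\bigl(\pi_+(\Lambda_j) + \Lambda_i\bigr),
\]
and obviously $tk\Lambda_1 = kt\Lambda_1$ since $t$ is central. Hence once again $k$ descends to a $\K$-linear isomorphism of the two quotient spaces, so their $\K$-dimensions agree.

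I do not anticipate any substantive obstacle here: the proof is essentially bookkeeping, and the only nontrivial input is the $K$-invariance of the decomposition $V = V_+ \oplus V_-$, which is immediate from the block-diagonal form of elements of $K$. The one small thing to be mindful of is the compatibility $\pi_+ \circ k = k \circ \pi_+$ used in the $(i;j)$ case, which fails for a general $g \in G$ and is the reason why these quantities are genuinely $K$-invariants rather than merely flag invariants.
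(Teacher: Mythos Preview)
Your proposal is correct and follows essentially the same approach as the paper: both arguments rest on the fact that $K$ preserves $V_+$ and $V_-$, so that $k(\Lambda_i \cap V_\pm) = k\Lambda_i \cap V_\pm$ and $k$ descends to $\K$-linear isomorphisms on each quotient. The only cosmetic difference is in the $(i;j)$ case, where the paper first rewrites $\pi_+(\Lambda_j) = (\Lambda_j + V_-)\cap V_+$ and then uses $k^{-1}V_\pm = V_\pm$, whereas you verify $\pi_+\circ k = k\circ\pi_+$ directly from the block-diagonal form; these are equivalent formulations of the same observation.
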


By Lemma~\ref{k-inv-quantities}, we say that $(i; +)$, $(i; -)$, $(i; \NN)$ and $(i;j)$ are \defn{left $K$-invariants}.

\subsection{Lemmas related to the left $K$-invariants}\label{sect4.5-ref}
In this section we record some technical lemmas on the left $K$-invariants of an affine flag $\Lambda_{\bullet}$ that will be used in Definition~\ref{alg-affine-flag-to-affine-clan} and the proof of Proposition~\ref{affine-flag-to-affine-clan}. 

The following lemma is analogous to the fact used in \cite[Prop. 2.2.6]{Yamamoto}. Recall that for an affine flag $\Lambda_{\bullet}$,  we defined $(i;\NN) = \dim_{\K} \left( \dfrac{\Lambda_{i}}{(\Lambda_{i} \cap V_+) \oplus (\Lambda_{i} \cap V_-)} \right)$ for any $i \in \ZZ$.

\begin{lemma} \label{k-inv-dim-rel}
For $i \in \ZZ$, it holds that $(i;\NN) - (i+1;\NN) =1 - (i;+) - (i;-)$.
\end{lemma}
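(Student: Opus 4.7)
The plan is to introduce a single intermediate $\K$-vector space, namely the quotient
\[
Q := \Lambda_i / \bigl((\Lambda_{i+1} \cap V_+) \oplus (\Lambda_{i+1} \cap V_-)\bigr),
\]
and to compute $\dim_\K Q$ in two different ways. This works because both $(\Lambda_{i+1}\cap V_+)\oplus(\Lambda_{i+1}\cap V_-)$ and $\Lambda_{i+1}$ sit inside $(\Lambda_i\cap V_+)\oplus(\Lambda_i\cap V_-)\subseteq \Lambda_i$, so $Q$ simultaneously contains the submodules $\Lambda_{i+1}/((\Lambda_{i+1}\cap V_+) \oplus (\Lambda_{i+1}\cap V_-))$ and $((\Lambda_i\cap V_+) \oplus (\Lambda_i\cap V_-))/((\Lambda_{i+1}\cap V_+) \oplus (\Lambda_{i+1}\cap V_-))$.

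First, using the short exact sequence
\[
0 \to \frac{\Lambda_{i+1}}{(\Lambda_{i+1}\cap V_+)\oplus(\Lambda_{i+1}\cap V_-)} \to Q \to \frac{\Lambda_i}{\Lambda_{i+1}} \to 0,
\]
I would read off $\dim_\K Q = (i+1;\NN) + 1$, since by the definition of an affine flag $\dim_\K(\Lambda_i/\Lambda_{i+1})=1$. Second, using the short exact sequence
\[
0 \to \frac{(\Lambda_i\cap V_+)\oplus(\Lambda_i\cap V_-)}{(\Lambda_{i+1}\cap V_+)\oplus(\Lambda_{i+1}\cap V_-)} \to Q \to \frac{\Lambda_i}{(\Lambda_i\cap V_+)\oplus(\Lambda_i\cap V_-)} \to 0,
\]
and noting that the left-hand term splits as a direct sum whose pieces have dimensions $(i;+)$ and $(i;-)$ respectively, I would get $\dim_\K Q = (i;+)+(i;-)+(i;\NN)$. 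Equating the two expressions yields the claim.

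Before writing the two dimension counts I should justify that $Q$ really is finite-dimensional so that additivity of dimensions is legitimate: this follows at once from Lemma~\ref{across-pos-neg-finite-dim} (which bounds $(i+1;\NN)$) together with the $1$-dimensionality of $\Lambda_i/\Lambda_{i+1}$. The only step needing any care is verifying that the two short exact sequences are well-posed, i.e.\ that $(\Lambda_{i+1}\cap V_+)\oplus(\Lambda_{i+1}\cap V_-)$ is a $\K$-subspace of both $\Lambda_{i+1}$ and $(\Lambda_i\cap V_+)\oplus(\Lambda_i\cap V_-)$, which is immediate from $\Lambda_{i+1}\subseteq\Lambda_i$ and $V_+\cap V_-=\{0\}$. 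There is no substantive obstacle in this lemma; it is essentially the observation that the $1$-dimensional jump in $\Lambda_\bullet$ is distributed among the three pieces $V_+$, $V_-$, and the complementary direction captured by $(\,\cdot\,;\NN)$.
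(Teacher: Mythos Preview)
Your proposal is correct and is essentially the paper's own proof: the paper introduces exactly your space $Q=\Lambda_i/((\Lambda_{i+1}\cap V_+)\oplus(\Lambda_{i+1}\cap V_-))$ and computes its $\K$-dimension in two ways via the third isomorphism theorem (your two short exact sequences), reaching the same identity $(i;\NN)+(i;+)+(i;-)=(i+1;\NN)+1$. One small slip in your motivational sentence: $\Lambda_{i+1}$ need not sit inside $(\Lambda_i\cap V_+)\oplus(\Lambda_i\cap V_-)$; what you actually use (and what is true) is that both $\Lambda_{i+1}$ and $(\Lambda_i\cap V_+)\oplus(\Lambda_i\cap V_-)$ contain $(\Lambda_{i+1}\cap V_+)\oplus(\Lambda_{i+1}\cap V_-)$ and are contained in $\Lambda_i$.
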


\begin{proof}
Treat $\Lambda_i, \Lambda_{i+1}, \Lambda_i \cap V_+, \Lambda_i \cap V_-, \Lambda_{i+1} \cap V_+,$ and  $\Lambda_{i+1} \cap V_-$ as $\K$-vector spaces. By the third isomorphism theorem for $\K$-vector spaces,
\[ \frac{\Lambda_{i}}{(\Lambda_{i+1} \cap V_+) \oplus (\Lambda_{i+1} \cap V_-)}  \Bigg/ \frac{(\Lambda_i \cap V_+) \oplus (\Lambda_i \cap V_-)}{(\Lambda_{i+1} \cap V_+) \oplus (\Lambda_{i+1} \cap V_-)} \cong \frac{\Lambda_{i}}{(\Lambda_{i} \cap V_+) \oplus (\Lambda_{i} \cap V_-)}. \]
But since the quotients 
\[ \frac{(\Lambda_i \cap V_+) \oplus (\Lambda_i \cap V_-)}{(\Lambda_{i+1} \cap V_+)\oplus (\Lambda_{i+1} \cap V_-)} \cong \frac{(\Lambda_i \cap V_+)}{(\Lambda_{i+1} \cap V_+)} \oplus \frac{(\Lambda_i \cap V_-)}{(\Lambda_{i+1} \cap V_-)}   \text{ and }
\frac{\Lambda_{i}}{(\Lambda_{i} \cap V_+) \oplus (\Lambda_{i} \cap V_-)} \]
are finite-dimensional $\K$-vector spaces with dimensions $(i;+) + (i;-)$ and $(i;\NN)$ respectively, it holds that $\dfrac{\Lambda_{i}}{(\Lambda_{i+1} \cap V_+) \oplus (\Lambda_{i+1} \cap V_-)}$ is also a finite-dimensional $\K$-vector space, with dimension equal to $(i;\NN) + (i;+) + (i;-)$. 

By the third isomorphism theorem for $\K$-vector spaces again, we have
\[ \frac{\Lambda_{i}}{(\Lambda_{i+1} \cap V_+) \oplus (\Lambda_{i+1} \cap V_-)}  \Bigg/  \frac{\Lambda_{i+1}}{(\Lambda_{i+1} \cap V_+) \oplus (\Lambda_{i+1} \cap V_-)} \cong \frac{\Lambda_i}{\Lambda_{i+1}}.\]
Therefore combining the facts that $\dim_{\K} \left(\dfrac{\Lambda_{i}}{(\Lambda_{i+1} \cap V_+) \oplus (\Lambda_{i+1} \cap V_-)} \right) = (i;\NN) + (i;+) + (i;-)$, and that $\dim_{\K} (\Lambda_i / \Lambda_{i+1}) = 1$, we have
\[(i;\NN) + (i;+) + (i;-) - (i+1; \NN) = 1.\]
\end{proof}

Together with Lemma~\ref{plus-minus-takes-01}, the following corollary holds.

\begin{corollary}
It holds that $(i;\NN) - (i+1;\NN) \in \{1, 0, -1\}$ for all $i \in [n]$.
\end{corollary}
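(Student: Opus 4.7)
The plan is to combine the two preceding results directly. By Lemma~\ref{k-inv-dim-rel}, for every $i \in \ZZ$ we have the identity
\[ (i;\NN) - (i+1;\NN) = 1 - (i;+) - (i;-), \]
so the question reduces purely to bounding the right-hand side.

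Next, I would invoke Lemma~\ref{plus-minus-takes-01}, which guarantees that each of $(i;+)$ and $(i;-)$ lies in $\{0,1\}$. Therefore their sum lies in $\{0,1,2\}$, and hence $1 - (i;+) - (i;-) \in \{1, 0, -1\}$. Substituting back gives $(i;\NN) - (i+1;\NN) \in \{1,0,-1\}$, as desired.

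There is no real obstacle here: the corollary is a one-line consequence of the two lemmas just proved, and no case analysis beyond noting the three possible values of $(i;+) + (i;-)$ is required. The only thing to be mindful of is that Lemma~\ref{k-inv-dim-rel} is stated for all $i \in \ZZ$, which in particular covers the range $i \in [1,n]$ asserted in the corollary.
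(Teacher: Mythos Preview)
Your proposal is correct and is exactly the intended argument: the paper states this as an immediate corollary of Lemma~\ref{k-inv-dim-rel} (with Lemma~\ref{plus-minus-takes-01} already available), and your one-line deduction from $1 - (i;+) - (i;-)$ with $(i;+),(i;-)\in\{0,1\}$ is precisely what is meant.
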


\begin{lemma}\label{affine-proj-dim}
For all $i \in \ZZ$, it holds that $\dim_{\K} \(\dfrac{\pi_+(\Lambda_i)}{\pi_+(\Lambda_{i+1})}\) = 1 - (i;-)$ and $\dim_{\K} \(\dfrac{\pi_-(\Lambda_i)}{\pi_-(\Lambda_{i+1})}\) = 1 - (i;+)$.
\end{lemma}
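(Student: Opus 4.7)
The plan is to prove the first equality; the second follows by symmetry (swap $V_+ \leftrightarrow V_-$ and $\pi_+ \leftrightarrow \pi_-$ throughout). Consider the composition
\[
\varphi: \Lambda_i \xrightarrow{\pi_+} \pi_+(\Lambda_i) \twoheadrightarrow \pi_+(\Lambda_i)/\pi_+(\Lambda_{i+1}).
\]
This map is clearly $\K$-linear and surjective, so I would first identify its kernel precisely. A vector $v \in \Lambda_i$ lies in $\ker(\varphi)$ iff $\pi_+(v) = \pi_+(w)$ for some $w \in \Lambda_{i+1}$, equivalently $v - w \in V_-$. Since $v, w \in \Lambda_i$, this forces $v - w \in \Lambda_i \cap V_-$, and hence $v \in \Lambda_{i+1} + (\Lambda_i \cap V_-)$. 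The reverse inclusion is immediate, giving $\ker(\varphi) = \Lambda_{i+1} + (\Lambda_i \cap V_-)$ and thus
\[
\pi_+(\Lambda_i)/\pi_+(\Lambda_{i+1}) \;\cong\; \Lambda_i\big/\bigl(\Lambda_{i+1} + (\Lambda_i \cap V_-)\bigr).
\]

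Next I would use the short exact sequence of $\K$-vector spaces
\[
0 \to \bigl(\Lambda_{i+1} + (\Lambda_i \cap V_-)\bigr)/\Lambda_{i+1} \to \Lambda_i/\Lambda_{i+1} \to \Lambda_i\big/\bigl(\Lambda_{i+1} + (\Lambda_i \cap V_-)\bigr) \to 0,
\]
which, combined with the defining property $\dim_\K(\Lambda_i/\Lambda_{i+1}) = 1$ of an affine flag, yields
\[
\dim_\K \bigl(\pi_+(\Lambda_i)/\pi_+(\Lambda_{i+1})\bigr) \;=\; 1 \;-\; \dim_\K \bigl((\Lambda_{i+1} + (\Lambda_i \cap V_-))/\Lambda_{i+1}\bigr).
\]
(Here I use that each term of the sequence has finite $\K$-dimension; the middle term is $1$, the left term is either $0$ or $1$ by the next step, and the right is thus $0$ or $1$.)

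Finally, the second isomorphism theorem for $\K$-vector spaces gives
\[
\bigl(\Lambda_{i+1} + (\Lambda_i \cap V_-)\bigr)/\Lambda_{i+1} \;\cong\; (\Lambda_i \cap V_-)\big/\bigl(\Lambda_{i+1} \cap (\Lambda_i \cap V_-)\bigr) \;=\; (\Lambda_i \cap V_-)/(\Lambda_{i+1} \cap V_-),
\]
where in the last equality we use $\Lambda_{i+1} \subset \Lambda_i$. By Definition~\ref{affine-dim-inv-def} this last quotient has $\K$-dimension $(i;-)$, completing the computation. I expect no real obstacle here: the only subtle point is the kernel identification in the first step, and verifying that nothing is infinite-dimensional, which is guaranteed since $\dim_\K(\Lambda_i/\Lambda_{i+1}) = 1$ bounds everything. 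The second equality is proved by the same argument with the roles of $V_+$ and $V_-$ interchanged.
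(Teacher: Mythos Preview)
Your proof is correct, and it is genuinely more direct than the paper's argument. The paper routes the computation through the invariant $(i;\NN)$: it first shows that $\pi_+(\Lambda_i)/(\Lambda_i \cap V_+)$ has dimension $(i;\NN)$ via the first isomorphism theorem, then uses two applications of the third isomorphism theorem to obtain
\[
\dim_{\K}\bigl(\pi_+(\Lambda_i)/\pi_+(\Lambda_{i+1})\bigr) = (i;\NN) + (i;+) - (i+1;\NN),
\]
and finally invokes Lemma~\ref{k-inv-dim-rel}, the identity $(i;\NN) - (i+1;\NN) = 1 - (i;+) - (i;-)$, to simplify this to $1 - (i;-)$.

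Your argument bypasses $(i;\NN)$ and Lemma~\ref{k-inv-dim-rel} entirely: the key observation that $\ker\varphi = \Lambda_{i+1} + (\Lambda_i \cap V_-)$ immediately reduces the question to computing $\dim_{\K}\bigl((\Lambda_{i+1} + (\Lambda_i \cap V_-))/\Lambda_{i+1}\bigr)$, which the second isomorphism theorem identifies with $(i;-)$. This is both shorter and self-contained. The paper's approach, while more circuitous, has the minor expository benefit of reinforcing the relationship between $\pi_+(\Lambda_i)$ and the invariant $(i;\NN)$, which is used again later (e.g.\ in Lemma~\ref{across-pos-neg-diff-formula}); but as a proof of this lemma alone, your route is cleaner.
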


\begin{proof}
Notice that $\Lambda_i \cap V_+ = \pi_+(\Lambda_i \cap V_+) \subset \pi_+(\Lambda_i)$. Therefore $\pi_+(\Lambda_i) / (\Lambda_i \cap V_+)$ is a well-defined $\K$-vector space.

We treat $V_+$ and $\Lambda_i \cap V_+$ as $\K$-vector spaces, and consider the composition of the canonical quotient map $V_+ \rightarrow V_+/(\Lambda_i \cap V_+)$ with the map $\pi_+: \Lambda_i \rightarrow V_+$. This composition map has image equal to $\pi_+(\Lambda_i)/(\Lambda_i \cap V_+)$ and kernel equal to $(\Lambda_i \cap V_-) \oplus (\Lambda_i \cap V_+)$. By the first isomorphism theorem of $\K$-vector spaces, we have
\[ \frac{\Lambda_i}{(\Lambda_i \cap V_-) \oplus (\Lambda_i \cap V_+)} \cong \frac{\pi_+(\Lambda_i)}{\Lambda_i \cap V_+}. \]

Since $ \dim_{\K} \left( \dfrac{\Lambda_i}{(\Lambda_i \cap V_-) \oplus (\Lambda_i \cap V_+)} \right) = (i;\NN)$, it holds that $\dim_{\K} \left(\dfrac{\pi_+(\Lambda_i)}{\Lambda_i \cap V_+} \right)= (i;\NN)$ by the above isomorphism.

Similarly, the quotient $\dfrac{\pi_+(\Lambda_{i+1}) }{ \Lambda_{i+1} \cap V_+}$ is a finite-dimensional $\K$-vector space and $\dim_{\K} \(\dfrac{\pi_+(\Lambda_{i+1}) }{ \Lambda_{i+1} \cap V_+}\) = \dim_{\K} \(\dfrac{\Lambda_{i+1}}{(\Lambda_{i+1} \cap V_-) \oplus (\Lambda_{i+1} \cap V_+) } \)= (i+1;\NN)$. 

Notice also that by the third isomorphism theorem on $\K$-vector spaces, we have
\[ \frac{\pi_+(\Lambda_i)}{\Lambda_{i} \cap V_+} \cong \frac{\pi_+(\Lambda_i)}{\Lambda_{i+1} \cap V_+} \Bigg/ \frac {\Lambda_{i} \cap V_+}{\Lambda_{i+1} \cap V_+}.\]
Since $\dfrac{\pi_+(\Lambda_i)}{\Lambda_{i} \cap V_+} $ and $\dfrac {\Lambda_{i} \cap V_+}{\Lambda_{i+1} \cap V_+}$ are finite-dimensional $\K$-vector spaces of dimension $(i;\NN)$ and $(i;+)$ respectively, $\dfrac{\pi_+(\Lambda_i)}{\Lambda_{i+1} \cap V_+}$ is also a finite-dimensional $\K$-vector space of dimension $(i;\NN) + (i ; +)$.

By the third isomorphism theorem on $\K$-vector space again, we have
\[ \frac{\pi_+(\Lambda_i)}{\pi_+(\Lambda_{i+1})} \cong \dfrac{\pi_+(\Lambda_i)}{\Lambda_{i+1} \cap V_+}\Bigg/ \frac{\pi_+(\Lambda_{i+1})} {\Lambda_{i+1} \cap V_+}. \] 
Therefore $\dfrac{\pi_+(\Lambda_i)}{\pi_+(\Lambda_{i+1})}$ is a finite-dimensional $\K$-vector space of dimension $(i;\NN) + (i;+) - (i+1;\NN)$, and by Lemma~\ref{k-inv-dim-rel},
\begin{align*}
\dim_{\K} \(\dfrac{\pi_+(\Lambda_i)}{\pi_+(\Lambda_{i+1})}\) & = (i;\NN) + (i;+) - (i+1; \NN)\\
& = 1-(i;+) - (i;-) + (i;+)\\
& = 1 - (i;-).
\end{align*}
The proof for $\dim_{\K} \( \dfrac{\pi_-(\Lambda_i)}{\pi_-(\Lambda_{i+1})} \) = 1 - (i;+)$ is similar.
\end{proof}

\begin{lemma}\label{affine-lmpli-overlap-dim-decr}
Suppose $i \in \ZZ$. If $(i;+) = 1$ and $(i;- ) = 1$, then there exist $v_+ \in \pi_+(\Lambda_{i+1})$ and $v_- \in \pi_-(\Lambda_{i+1})$ satisfying $v_+, v_- \notin \Lambda_{i+1}$, such that  $\Lambda_i = \Lambda_{i+1} \oplus \K v_+$ and $\Lambda_i = \Lambda_{i+1} \oplus \K v_-$. 
\end{lemma}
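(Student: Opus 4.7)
The plan is to produce $v_+$ and $v_-$ by combining two complementary inputs: the hypothesis $(i;+)=1$ directly supplies a usable candidate inside $\Lambda_i \cap V_+$, while the hypothesis $(i;-)=1$ is precisely what guarantees that this candidate already lies in $\pi_+(\Lambda_{i+1})$. A symmetric argument then yields $v_-$.

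First I would invoke Lemma~\ref{affine-proj-dim} twice. Under the assumption $(i;-)=1$, the lemma gives
\[ \dim_{\K}\!\left( \pi_+(\Lambda_i)/\pi_+(\Lambda_{i+1}) \right) = 1-(i;-) = 0, \]
so $\pi_+(\Lambda_i) = \pi_+(\Lambda_{i+1})$; the analogous application using $(i;+)=1$ yields $\pi_-(\Lambda_i) = \pi_-(\Lambda_{i+1})$. This is the key structural input.

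Next, since $(i;+)=1$, I would pick any $v_+ \in (\Lambda_i \cap V_+) \setminus (\Lambda_{i+1} \cap V_+)$. Because $v_+ \in V_+$, the membership $v_+ \in \Lambda_{i+1}$ would force $v_+ \in \Lambda_{i+1} \cap V_+$, contradicting the choice; hence $v_+ \notin \Lambda_{i+1}$. Combined with $\dim_{\K}(\Lambda_i/\Lambda_{i+1}) = 1$, this yields $\Lambda_i = \Lambda_{i+1} \oplus \K v_+$ as $\K$-vector spaces. Finally, $v_+ \in \Lambda_i \cap V_+ \subset \pi_+(\Lambda_i) = \pi_+(\Lambda_{i+1})$, so $v_+$ meets every requirement. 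The construction of $v_-$ is the mirror image, starting from $(i;-)=1$ to extract a witness in $\Lambda_i \cap V_-$ and using the identity $\pi_-(\Lambda_i) = \pi_-(\Lambda_{i+1})$ obtained above.

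There is no substantial obstacle once Lemma~\ref{affine-proj-dim} is in hand; the only subtle point is noticing that the two hypotheses play crossed roles — $(i;+)$ supplies the witness for $v_+$ while $(i;-)$ controls the $V_+$-projection, and vice versa for $v_-$. A naive attempt using only $(i;+)$ would produce a vector in $\Lambda_i \cap V_+$ but give no reason for it to lie in the smaller set $\pi_+(\Lambda_{i+1})$, so both hypotheses are genuinely needed.
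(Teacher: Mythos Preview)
Your proof is correct and follows essentially the same argument as the paper: both extract $v_+$ from $(\Lambda_i \cap V_+)\setminus(\Lambda_{i+1}\cap V_+)$, use $v_+\in V_+$ to deduce $v_+\notin\Lambda_{i+1}$ and hence $\Lambda_i=\Lambda_{i+1}\oplus\K v_+$, and then invoke Lemma~\ref{affine-proj-dim} with $(i;-)=1$ to obtain $\pi_+(\Lambda_i)=\pi_+(\Lambda_{i+1})$ so that $v_+\in\pi_+(\Lambda_{i+1})$. The only difference is the order of presentation, and your remark about the crossed roles of the two hypotheses is a helpful clarification that the paper leaves implicit.
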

\begin{proof}
The fact that $(i;+) = \dim_{\K}( (\Lambda_i \cap V_+)/(\Lambda_{i+1} \cap V_+))= 1$ means that there exists $v_+ \in \Lambda_i \cap V_+$ such that 
\[ \Lambda_i \cap V_+ = (\Lambda_{i+1} \cap V_+) \oplus \K v_+.\]
Notice that $v_+ \in \Lambda_i$ and $v_+ \notin (\Lambda_{i+1} \cap V_+)$. But since $v_+ \in V_+$, we have $v_+ \notin \Lambda_{i+1}$, and therefore 
\[ \Lambda_i \supset \Lambda_{i+1} \oplus \K v_+ \supsetneq \Lambda_{i+1}. \] 
Since $\dim_{\K}(\Lambda_i/\Lambda_{i+1}) = 1$, we conclude that 
\[ \Lambda_i = \Lambda_{i+1} \oplus \K v_+.\]
Similar arguments using the fact $(i;-) = \dim_{\K}( (\Lambda_i \cap V_-)/(\Lambda_{i+1} \cap V_-))= 1$ show that there exists $v_- \in \pi_-(\Lambda_{i+1})$ satisfying
\[ \Lambda_i = \Lambda_{i+1} \oplus \K v_-.\]
It remains to show that $v_+ \in \pi_+(\Lambda_{i+1})$ and $v_- \in \pi_-(\Lambda_{i+1})$.
In fact, it holds that  $\pi_+(\Lambda_{i}) = \pi_+(\Lambda_{i+1})$ and $\pi_-(\Lambda_{i}) = \pi_-(\Lambda_{i+1})$, since by Lemma~\ref{affine-proj-dim} we have 
\[\dim_{\K}( \pi_+(\Lambda_{i})/ \pi_+(\Lambda_{i+1})) = 1-(i;-) = 0\] 
and
 \[\dim_{\K}( \pi_-(\Lambda_{i})/ \pi_-(\Lambda_{i+1})) = 1- (i;+) = 0.\] 
However, we have $v_+ \in \Lambda_i \cap V_+ =  \pi_+ (\Lambda_i \cap V_+) \subset \pi_+(\Lambda_i) = \pi_+(\Lambda_{i+1})$ and $v_- \in \Lambda_i \cap V_- = \pi_+(\Lambda_i \cap V_-)\subset \pi_-(\Lambda_i)= \pi_-(\Lambda_{i+1})$. 
\end{proof}

\begin{lemma}\label{affine-find-left-bracket}
Suppose $v_+ \in \pi_+(\Lambda_{i+1})$ and $v_+ \notin \Lambda_{i+1}$. Then there exists a unique $j > i$ such that $v_+ \in \pi_+(\Lambda_j) + \Lambda_{i+1}$ and $v_+ \notin \pi_+(\Lambda_{j+1}) + \Lambda_{i+1}$.
\begin{proof}
Suppose not. Using the fact that $v_+ \in \pi_+(\Lambda_{i+1}) \subset \pi_+(\Lambda_{i+1}) + \Lambda_{i+1}$, we have $v_+ \in \pi_+(\Lambda_{j}) + \Lambda_{i+1}$ for all integers $j > i$. However, it holds that $t^N \pi_+(\Lambda_{i+1}) \subset \Lambda_{i+1}$ for large enough $N$, since $\Lambda_{i+1}$ is an $A$-lattice. Therefore, we have $v_+ \in \pi_+(\Lambda_{i+1 + Nn}) + \Lambda_{i+1} = \Lambda_{i+1}$, a contradiction to the assumption that $v_+ \notin  \Lambda_{i+1}$. 
\end{proof}
\end{lemma}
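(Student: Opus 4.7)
The plan is to prove existence by a monotonicity plus stabilization argument, then deduce uniqueness immediately from monotonicity. The key observation is that the family $\{\pi_+(\Lambda_j)+\Lambda_{i+1}\}_{j>i}$ of subspaces of $V$ forms a descending chain in $j$: the inclusion $\Lambda_{j+1}\subseteq \Lambda_j$ and $\K$-linearity of $\pi_+$ give $\pi_+(\Lambda_{j+1})\subseteq \pi_+(\Lambda_j)$, and so $\pi_+(\Lambda_{j+1})+\Lambda_{i+1}\subseteq \pi_+(\Lambda_j)+\Lambda_{i+1}$. Consequently, the set $S:=\{j>i:v_+\in\pi_+(\Lambda_j)+\Lambda_{i+1}\}$ is downward-closed in $(i,\infty)\cap\ZZ$.

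For existence I would first observe that $S$ is nonempty: taking $j=i+1$, the hypothesis $v_+\in\pi_+(\Lambda_{i+1})$ gives immediately $v_+\in\pi_+(\Lambda_{i+1})+\Lambda_{i+1}$. The substantive step is to show that $S$ is bounded above. For this, I would use that $\pi_+(\Lambda_{i+1})$ is a finitely generated $A$-submodule of $V_+$, and that by Lemma~\ref{lattice-intersect-free} the submodule $\Lambda_{i+1}\cap V_+$ is a free $A$-module of rank $p$, hence a full-rank $A$-lattice in $V_+$. Therefore there exists $N\gg 0$ with $t^N\pi_+(\Lambda_{i+1})\subseteq \Lambda_{i+1}\cap V_+\subseteq\Lambda_{i+1}$. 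Taking $j=i+1+Nn$, one has $\Lambda_j=t^N\Lambda_{i+1}$, so $\pi_+(\Lambda_j)=t^N\pi_+(\Lambda_{i+1})\subseteq\Lambda_{i+1}$, which forces $\pi_+(\Lambda_j)+\Lambda_{i+1}=\Lambda_{i+1}$. Since $v_+\notin\Lambda_{i+1}$ by hypothesis, such $j$ is not in $S$.

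Now $S$ is a nonempty, downward-closed, bounded-above subset of $\ZZ$, so it has a maximum $j:=\max S$. By construction $v_+\in\pi_+(\Lambda_j)+\Lambda_{i+1}$ but $v_+\notin\pi_+(\Lambda_{j+1})+\Lambda_{i+1}$, giving existence. Uniqueness is then immediate: any $j'$ satisfying both conditions must lie in $S$ with $j'+1\notin S$, and downward-closedness of $S$ forces $j'=\max S=j$.

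The main (mild) obstacle is the stabilization step, i.e.\ verifying that $\pi_+(\Lambda_{i+1})$ is already trapped inside some dilate of $\Lambda_{i+1}\cap V_+$; this is where one genuinely uses that $\Lambda_{i+1}\cap V_+$ has full rank $p$ in $V_+$ rather than just being nonzero. Alternatively, one could phrase the whole argument as a proof by contradiction: if no such $j$ exists, then by downward-closedness $S$ would equal $(i,\infty)\cap\ZZ$, and applying the stabilization at $j=i+1+Nn$ would give $v_+\in\Lambda_{i+1}$, contradicting the hypothesis.
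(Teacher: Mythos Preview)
Your proof is correct and follows essentially the same approach as the paper: the paper's argument is precisely the contradiction variant you describe in your final sentence, using that $v_+\in\pi_+(\Lambda_{i+1})+\Lambda_{i+1}$ to start and then the stabilization $t^N\pi_+(\Lambda_{i+1})\subset\Lambda_{i+1}$ for large $N$ to reach a contradiction. Your version is in fact more careful, since you make the monotonicity of the chain explicit and you actually prove uniqueness (which the paper's proof does not address).
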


\begin{lemma}\label{across-pos-neg-diff-formula}
It holds that $ (i; \NN) = \dim_{\K}\( \dfrac{\pi_+(\Lambda_i) + \Lambda_i}{\Lambda_i} \) $ for all $i \in \ZZ$.
\begin{proof}
We shall prove that $\dfrac{\Lambda_i}{(\Lambda_i \cap V_+) \oplus (\Lambda_i \cap V_-)}  \cong \dfrac{\pi_+(\Lambda_i) + \Lambda_i}{\Lambda_i}$ as $\K$-vector spaces.

Define the $\K$-linear transformation $f: \Lambda_i \rightarrow \dfrac{\pi_+(\Lambda_i) + \Lambda_i}{\Lambda_i}$ by $f(v) = \pi_+(v) + \Lambda_i$, where $v \in \Lambda_i$. Since $\pi_+$ is a linear transformation, the function $f$ is also a linear transformation between the two $\K$-vector spaces. It is clear that $f$ is surjective, by noticing that $\pi_+(v) + w + \Lambda_i = \pi_+(v) + \Lambda_i = f(v)$ for any $v, w \in \Lambda_i$. It remains to show that the kernel of $f$ is $(\Lambda_i \cap V_+) \oplus (\Lambda_i \cap V_-)$.

Write 
\begin{equation}\label{expand-lattice}
\Lambda_i = (\Lambda_i \cap V_+) \oplus (\Lambda_i \cap V_-) \oplus W_i, 
\end{equation}
 where $W_i$ is a complementary $\K$-subspace of $(\Lambda_i \cap V_+) \oplus (\Lambda_i \cap V_-)$ in $\Lambda_i$, and of finite $\K$-dimension by Lemma~\ref{across-pos-neg-finite-dim}. Then for any $v \in \Lambda_i$, it holds that $v = v_+ + v_- + w$, where $v_+ \in \Lambda_i \cap V_+$, $v_- \in \Lambda_i \cap V_-$, and $w \in W_i$. 

Notice that $f(v) = \pi_+(v) + \Lambda_i = \Lambda_i$ if and only if $\pi_+(v) \in \Lambda_i$. By writing $v = v_+ + v_- + w$, it holds that 
$\pi_+(v) = \pi_+(v_+ + v_- + w) = \pi_+(v_+) + \pi_+(v_-) + \pi_+(w) = v_+ + \pi_+(w) \in \Lambda_i$.
Since $v_+ \in \Lambda_i$, we have $\pi_+(w) \in \Lambda_i$. We shall now show that $w = 0$.

Write $w = w_+ + w_-$, where $w_+ \in V_+$ and $w_- \in V_-$, and notice that $\pi_+(w) = w_+ \in \Lambda_i$. It follows that $w_+ \in \Lambda_i \cap V_+$. Moreover, we have $w_- = w - w_+ \in \Lambda_i$ and therefore $w_- \in \Lambda_i \cap V_-$. It follows that $w = w_+ + w_- \in (\Lambda_i \cap V_+) \oplus (\Lambda_i \cap V_-)$. Since $W_i$ is a complementary $\K$-subspace of $(\Lambda_i \cap V_+) \oplus (\Lambda_i \cap V_-)$ in $\Lambda_i$ and $w \in W_i$, it holds that $w = 0$. 

As a result, the fact that $f(v) = \pi_+(v) + \Lambda_i = \Lambda_i$ implies that $v \in (\Lambda_i \cap V_+) \oplus (\Lambda_i \cap V_-)$, and, on the other hand, if $v \in (\Lambda_i \cap V_+) \oplus (\Lambda_i \cap V_-)$ then $f(v) = \pi_+(v) + \Lambda_i \in (\Lambda_i \cap V_+) + \Lambda_i \subset \Lambda_i$. Therefore it holds that $\ker f = (\Lambda_i \cap V_+) \oplus (\Lambda_i \cap V_-)$, and by the first isomorphism theorem for $\K$-vector spaces, we have
\[ \dfrac{\Lambda_i}{(\Lambda_i \cap V_+) \oplus (\Lambda_i \cap V_-)}  \cong \dfrac{\pi_+(\Lambda_i) + \Lambda_i}{\Lambda_i}.\]
It follows that $\dim_{\K} \( \dfrac{\pi_+(\Lambda_i) + \Lambda_i}{\Lambda_i} \) = \dim_{\K} \( \dfrac{\Lambda_i}{(\Lambda_i \cap V_+) \oplus (\Lambda_i \cap V_-)} \) = (i;\NN)$.
\end{proof}
\end{lemma}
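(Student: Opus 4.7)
The plan is to build an explicit $\K$-linear isomorphism
\[ \dfrac{\Lambda_i}{(\Lambda_i \cap V_+) \oplus (\Lambda_i \cap V_-)} \xrightarrow{\sim} \dfrac{\pi_+(\Lambda_i) + \Lambda_i}{\Lambda_i} \]
and then invoke the first isomorphism theorem for $\K$-vector spaces. Since the left-hand side has finite $\K$-dimension equal to $(i;\NN)$ by Lemma~\ref{across-pos-neg-finite-dim}, the right-hand side will automatically be finite-dimensional of the same dimension, which is exactly the claim.

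First I would define the $\K$-linear map $f \colon \Lambda_i \to (\pi_+(\Lambda_i) + \Lambda_i)/\Lambda_i$ by $f(v) = \pi_+(v) + \Lambda_i$. This is $\K$-linear because $\pi_+$ is. Surjectivity is immediate: any coset representative in $\pi_+(\Lambda_i) + \Lambda_i$ has the form $\pi_+(u) + w$ with $u, w \in \Lambda_i$, and $f(u) = \pi_+(u) + \Lambda_i$ represents the same coset. The substantive work is to identify $\ker f$ with $(\Lambda_i \cap V_+) \oplus (\Lambda_i \cap V_-)$. The containment $\supseteq$ is easy: if $v \in \Lambda_i \cap V_+$ then $\pi_+(v) = v \in \Lambda_i$, and if $v \in \Lambda_i \cap V_-$ then $\pi_+(v) = 0 \in \Lambda_i$.

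For the reverse containment $\subseteq$, I would choose a $\K$-complement: write $\Lambda_i = (\Lambda_i \cap V_+) \oplus (\Lambda_i \cap V_-) \oplus W_i$ as $\K$-vector spaces, where $W_i$ is finite-dimensional (this is the content guaranteed by Lemma~\ref{across-pos-neg-finite-dim}). Given $v \in \ker f$, decompose $v = v_+ + v_- + w$ with $v_\pm \in \Lambda_i \cap V_\pm$ and $w \in W_i$. The assumption $\pi_+(v) \in \Lambda_i$ combined with $\pi_+(v_+) = v_+ \in \Lambda_i$ and $\pi_+(v_-) = 0$ forces $\pi_+(w) \in \Lambda_i$. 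Writing $w = w_+ + w_-$ with $w_\pm \in V_\pm$, we get $w_+ = \pi_+(w) \in \Lambda_i \cap V_+$, and then $w_- = w - w_+ \in \Lambda_i \cap V_-$. Hence $w \in ((\Lambda_i \cap V_+) \oplus (\Lambda_i \cap V_-)) \cap W_i = \{0\}$, so $w = 0$ and $v = v_+ + v_- \in (\Lambda_i \cap V_+) \oplus (\Lambda_i \cap V_-)$.

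The main potential obstacle is the set-up rather than a combinatorial argument: one must be careful that $\pi_+(\Lambda_i)$ is not an $A$-module in a convenient sense and that the quotient $(\pi_+(\Lambda_i) + \Lambda_i)/\Lambda_i$ is being considered purely as a $\K$-vector space. Once the setting is clarified, the argument reduces to the complementary-subspace bookkeeping above and the first isomorphism theorem, yielding $\dim_\K((\pi_+(\Lambda_i) + \Lambda_i)/\Lambda_i) = (i;\NN)$.
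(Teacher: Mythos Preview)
Your proposal is correct and follows essentially the same approach as the paper: both define the map $f(v)=\pi_+(v)+\Lambda_i$, verify surjectivity, and identify the kernel by choosing a $\K$-complement $W_i$ to $(\Lambda_i\cap V_+)\oplus(\Lambda_i\cap V_-)$ and showing the $W_i$-component of any kernel element must vanish. The only difference is presentational: you state the easy containment $\ker f \supseteq (\Lambda_i\cap V_+)\oplus(\Lambda_i\cap V_-)$ explicitly up front, whereas the paper records it at the end.
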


\begin{corollary} \label{expand-lattice-cor}
It holds that $(i;i) = n+1-i + (i;\NN)$ for any $i \leq n+1$.
\begin{proof}
Using the third isomorphism theorem for $\K$-vector spaces, we have
\[ \dfrac{\pi_+(\Lambda_i) + \Lambda_i}{t\Lambda_1} \Bigg/ \dfrac{\Lambda_i}{t\Lambda_1} \cong \dfrac{\pi_+(\Lambda_i) + \Lambda_i}{\Lambda_i}. \]
Notice that $\Lambda_i/(t\Lambda_1)$ is a finite-dimensional $\K$-vector space of dimension $n+1-i$. By Lemma~\ref{across-pos-neg-diff-formula}, it holds that $\dfrac{\pi_+(\Lambda_i) + \Lambda_i}{\Lambda_i}$ is also a finite-dimensional $\K$-vector space of dimension $(i;\NN)$. Therefore the $\K$-vector space $\dfrac{\pi_+(\Lambda_i) + \Lambda_i}{t\Lambda_1}$ is also finite-dimensional, and $\dim_{\K} \left( \dfrac{\pi_+(\Lambda_i) + \Lambda_i}{t\Lambda_1} \right) = (i;i) = n+1-i + (i;\NN)$.
\end{proof}
\end{corollary}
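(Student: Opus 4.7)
The plan is to build a short chain of finite-dimensional $\K$-vector quotients whose dimensions match up to give the identity, with Lemma~\ref{across-pos-neg-diff-formula} doing the heavy lifting and a short flag-dimension count supplying the remaining $n+1-i$.

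First I would unpack the notation: although $(i;j)$ was defined under the hypothesis $i<j$, the corollary tacitly extends the definition by the same formula $(i;i) := \dim_\K\!\big((\pi_+(\Lambda_i)+\Lambda_i)/t\Lambda_1\big)$. To make sense of this quotient I need $t\Lambda_1 \subseteq \Lambda_i$ whenever $i \leq n+1$, which follows from the periodicity convention $\Lambda_{n+1}=t\Lambda_1$ together with $\Lambda_1\supset \Lambda_2\supset\cdots\supset \Lambda_{n+1}$. In particular I have the chain of $\K$-subspaces
\[
t\Lambda_1 \;\subseteq\; \Lambda_i \;\subseteq\; \pi_+(\Lambda_i)+\Lambda_i.
\]

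Next I would apply the third isomorphism theorem for $\K$-vector spaces to this chain, yielding
\[
\frac{\pi_+(\Lambda_i)+\Lambda_i}{t\Lambda_1}\,\Big/\,\frac{\Lambda_i}{t\Lambda_1} \;\simeq\; \frac{\pi_+(\Lambda_i)+\Lambda_i}{\Lambda_i}.
\]
The middle quotient $\Lambda_i/t\Lambda_1 = \Lambda_i/\Lambda_{n+1}$ has $\K$-dimension $n+1-i$: each successive quotient $\Lambda_j/\Lambda_{j+1}$ is one-dimensional for $j\in[1,n-1]$ by the definition of the affine flag variety, and the final step $\Lambda_n/\Lambda_{n+1} = \Lambda_n/t\Lambda_1$ is one-dimensional by Proposition~\ref{affflagbasis}(i). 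By Lemma~\ref{across-pos-neg-diff-formula}, the right-hand quotient $(\pi_+(\Lambda_i)+\Lambda_i)/\Lambda_i$ has $\K$-dimension $(i;\NN)$, so in particular it is finite-dimensional.

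Combining these two finite-dimensional facts, the top term $(\pi_+(\Lambda_i)+\Lambda_i)/t\Lambda_1$ is also finite-dimensional and its dimension is the sum, giving precisely $(i;i) = (n+1-i)+(i;\NN)$. No real obstacle arises; the only point requiring a moment of care is the extension of the notation $(i;j)$ to $j=i$ and the use of Proposition~\ref{affflagbasis}(i) to extend the unit dimension drops one extra step past the fundamental domain of the affine flag.
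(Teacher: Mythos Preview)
Your proposal is correct and follows essentially the same approach as the paper: apply the third isomorphism theorem to the chain $t\Lambda_1 \subseteq \Lambda_i \subseteq \pi_+(\Lambda_i)+\Lambda_i$, then combine $\dim_\K(\Lambda_i/t\Lambda_1)=n+1-i$ with Lemma~\ref{across-pos-neg-diff-formula}. Your version is slightly more careful in justifying the inclusion $t\Lambda_1\subseteq\Lambda_i$ and the dimension count via Proposition~\ref{affflagbasis}(i), but the argument is the same.
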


The following lemma and corollary determines the base case for the backward induction used to prove (i) and (iii) in Proposition~\ref{affine-flag-to-affine-clan}.
\begin{lemma}
There exists $m \in \ZZ$ such that $\pi_+(\Lambda_m) \subset t\Lambda_1$. 
\end{lemma}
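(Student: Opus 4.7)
The plan is to prove the two assertions separately: existence of some $m$ with $\pi_+(\Lambda_m) \subset t\Lambda_1$, using the periodicity $\Lambda_{i+n} = t\Lambda_i$ of an affine flag, and the lower bound $m \geq p$, which drops out of a telescoping sum of the invariants $(i;+)$.

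For existence, I would first observe that $\pi_+(\Lambda_1)$ is a finitely generated $A$-submodule of $V_+$, because $\Lambda_1$ is finitely generated over $A$ and $\pi_+$ is $A$-linear. Pick any finite set of $A$-generators of $\pi_+(\Lambda_1)$; each generator lies in $V$, and since $\Lambda_1$ spans $V$ as an $F$-vector space, each such generator lies in $t^{-M}\Lambda_1$ for some $M \in \NN$. Taking $M$ large enough uniformly gives $t^N\pi_+(\Lambda_1) \subset t\Lambda_1$ for all sufficiently large $N$. Using $\Lambda_{1+Nn} = t^N \Lambda_1$, one computes $\pi_+(\Lambda_{1+Nn}) = \pi_+(t^N\Lambda_1) = t^N\pi_+(\Lambda_1) \subset t\Lambda_1$, so $m = 1 + Nn$ works. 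Since $\Lambda_{m+1} \subset \Lambda_m$ implies $\pi_+(\Lambda_{m+1}) \subset \pi_+(\Lambda_m)$, this $m$ can be freely enlarged.

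For the lower bound, suppose $\pi_+(\Lambda_m) \subset t\Lambda_1$. A quick check using $F$-invariance of $V_+$ (both $t$ and $t^{-1}$ act on $V_+$) shows $t\Lambda_1 \cap V_+ = t(\Lambda_1 \cap V_+)$. Since $\Lambda_m \cap V_+ \subset \pi_+(\Lambda_m)$, we conclude
\[\Lambda_m \cap V_+ \;\subset\; t\Lambda_1 \cap V_+ \;=\; t(\Lambda_1 \cap V_+).\]
By Lemma~\ref{lattice-intersect-free}, $\Lambda_1 \cap V_+$ is a free $A$-module of rank $p$, whence $\dim_\K((\Lambda_1 \cap V_+)/t(\Lambda_1 \cap V_+)) = p$. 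Telescoping the chain $\Lambda_1 \cap V_+ \supset \Lambda_2 \cap V_+ \supset \cdots \supset \Lambda_m \cap V_+$ gives
\[\sum_{i=1}^{m-1}(i;+) \;=\; \dim_\K\!\left(\frac{\Lambda_1 \cap V_+}{\Lambda_m \cap V_+}\right) \;\geq\; \dim_\K\!\left(\frac{\Lambda_1 \cap V_+}{t(\Lambda_1 \cap V_+)}\right) \;=\; p.\]
By Lemma~\ref{plus-minus-takes-01} each $(i;+)\in\{0,1\}$, so the sum has at most $m-1$ nonzero terms. Therefore $m - 1 \geq p$, which gives $m \geq p+1 \geq p$.

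The only subtle point is the identity $t\Lambda_1 \cap V_+ = t(\Lambda_1 \cap V_+)$, but this is immediate from the fact that multiplication by $t$ is an $F$-linear bijection preserving the decomposition $V = V_+ \oplus V_-$. Everything else is a direct application of already-proved lemmas, so I expect no real obstacle.
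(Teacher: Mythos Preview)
Your proof is correct and matches the paper's approach. The paper's own proof is a one-line pointer (``arguments similar to the proof of Lemma~\ref{affine-find-left-bracket}''), whose content is precisely the observation $t^N\pi_+(\Lambda_1)\subset\Lambda_1$ for large $N$ combined with $\Lambda_{1+Nn}=t^N\Lambda_1$ --- exactly your existence argument. For the lower bound $m\geq p$ the paper gives no explicit argument at all, so your telescoping computation via $\sum_{i=1}^{m-1}(i;+)$ and Lemma~\ref{plus-minus-takes-01} is a genuine fleshing-out of what the paper leaves implicit; in fact you obtain the slightly sharper $m\geq p+1$. One tiny caveat: your telescoping chain $\Lambda_1\cap V_+\supset\cdots\supset\Lambda_m\cap V_+$ tacitly assumes $m\geq 1$, but this is harmless since for $m\leq 0$ one has $\Lambda_1\cap V_+\subset\Lambda_m\cap V_+\subset t(\Lambda_1\cap V_+)$, forcing $p=0$.
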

\begin{proof}
The result follows by arguments similar to the proof of Lemma~\ref{affine-find-left-bracket}.
\end{proof}

\begin{corollary}\label{induction-base-step-affine-clan}
There exists a unique $m \in \ZZ$ such that $\pi_+(\Lambda_m) \subset t\Lambda_1$ and $\pi_+(\Lambda_{m-1}) \not\subset t\Lambda_1$.
\end{corollary}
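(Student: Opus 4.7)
The plan is to read off the corollary from the preceding lemma via a minimum argument. The key observation is that the set
\[
S := \{ m \in \ZZ : \pi_+(\Lambda_m) \subset t\Lambda_1 \}
\]
is upward closed in $\ZZ$: since $\Lambda_{m+1} \subset \Lambda_m$, we have $\pi_+(\Lambda_{m+1}) \subset \pi_+(\Lambda_m)$, so $m \in S$ implies $m+1 \in S$. By the preceding lemma, $S$ is nonempty and every element of $S$ is at least $p$, so $S$ is a nonempty subset of $\ZZ$ bounded below.

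Consequently $m := \min S$ exists and satisfies $m \geq p$. By construction $\pi_+(\Lambda_m) \subset t\Lambda_1$, and minimality of $m$ forces $\pi_+(\Lambda_{m-1}) \not\subset t\Lambda_1$, which is exactly the conclusion of the corollary. For uniqueness, suppose $m'$ satisfies both conditions. Then $m' \in S$, so $m' \geq m$; on the other hand $m' - 1 \notin S$, and upward closure forbids $m \leq m' - 1$, so $m' \leq m$. Hence $m' = m$.

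The only point requiring outside input is the lower bound $m \geq p$, which is exactly the ``moreover'' clause in the preceding lemma; once that is in hand, the corollary reduces to the elementary monotonicity of the flag and the well-ordering of $\ZZ_{\geq p}$. I do not anticipate any real obstacle here, as neither the existence of the minimum nor the uniqueness step relies on any further structure of the affine flag $\Lambda_\bullet$ beyond the defining inclusions.
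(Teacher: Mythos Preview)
Your proof is correct and is precisely the argument the paper intends: the corollary is stated without proof, as an immediate consequence of the preceding lemma, and your minimum-plus-upward-closure argument is the natural way to fill in that step. The only mild caveat is that the lemma's ``moreover, $m \geq p$'' must be read as a lower bound for every such $m$ (equivalently, for the minimal one), which is how you use it; this is clearly the intended meaning given the statement of the corollary.
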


\subsection{Bijections between orbits of affine flags and affine clans}\label{sect4.6-ref}
The following is an algorithm whose input is an affine flag and whose output is a $\ZZ$-indexed sequence, which will later be shown to be an affine $(p,q)$-clan.
\begin{definition}\label{alg-affine-flag-to-affine-clan}
For an affine flag $\Lambda_{\bullet} = (\Lambda_1 \supset \Lambda_2 \dots \supset \Lambda_n)$, produce a $\ZZ$-indexed sequence $c = (\dots, c_1, c_2, \dots,  c_n, \dots)$ by the following inductive procedure:
\ben
\item[] Suppose $c_1, c_2, \dots, c_{i-1}$ are determined.
\item[(a)] If $c_i$ is already determined, then move on to determine $c_{i+1}$.
\item[(b)] If $(i;+) = 1 $ and $(i;- ) = 0$, set $c_i = +$. Set also $c_{i+rn} = +$ for $r \in \ZZ$.
\item[(c)] If  $(i;+) = 0$ and $(i;- ) = 1$, set $c_i = -$. Set also $c_{i+rn} = -$ for $r \in \ZZ$.
\item[(d)] If  $(i;+) = 0$ and $(i;- ) = 0$, then $(i;\NN) - (i+1;\NN) = 1$ by Lemma~\ref{k-inv-dim-rel}. Consider the set $\{c_1, c_2, \dots , c_{i-1}\} \cap [n]$. If the set is empty, set $c_i = 1$, and if the set is not empty, set $c_i = 1 + \max (\{c_1, c_2, \dots , c_{i-1}\} \cap [n])$. Set also $c_{i+rn} = c_i + rn$ for $r\in \ZZ$.
\item[(e)] If $(i;+) = 1$ and $(i;- ) = 1$, then $(i;\NN) - (i+1;\NN) = -1$ by Lemma~\ref{k-inv-dim-rel}. By Lemma~\ref{affine-lmpli-overlap-dim-decr} there exist $v_+ \in \pi_+(\Lambda_{i+1})$ and $v_- \in \pi_-(\Lambda_{i+1})$ satisfying $v_+, v_- \notin \Lambda_{i+1}$, such that $\Lambda_i = \Lambda_{i+1} \oplus \K v_+$ and $\Lambda_i = \Lambda_{i+1} \oplus \K v_-$.
By Lemma~\ref{affine-find-left-bracket}, there exists a unique $j > i$ such that $v_+ \in \pi_+(\Lambda_j) + \Lambda_{i+1}$ and $v_+ \notin \pi_+(\Lambda_{j+1}) + \Lambda_{i+1}$. (Here it is possible that $j > n$.)

If $j \textrm{ mod } n < i \textrm{ mod } n$ and hence $c_{j} \in \ZZ$ by Lemma~\ref{step-e-valid}, set $c_i = c_j$ and $c_{i+rn} = c_i + rn$ for $r \in \ZZ$.

If $j \textrm{ mod } n > i \textrm{ mod } n$ and hence $c_{j}$ is not determined yet,  consider the set $(\{c_1, c_2, \dots , c_{i-1}\} \cap [n])$ again. Set $c_j = c_i = 1$ if the set is empty, and set $c_j = c_i= 1+  \max (\{c_1, c_2, \dots , c_{i-1}\} \cap [n])$ if the set is not empty. In both cases set $c_{j+rn} = c_{i+rn} = c_i + rn$ for $r\in \ZZ$. 

Denote the indices $i+rn$ and $j+rn$ as being \defn{paired} for each $r \in \ZZ$.
\een
\end{definition}

The following are two technical lemmas related to Definition~\ref{alg-affine-flag-to-affine-clan}, which shows that $c_j \in \ZZ$ in case (e) if $c_j$ is already determined, and the $\ZZ$-indexed sequence produced in Definition~\ref{alg-affine-flag-to-affine-clan} is indeed an affine $(p,q)$-clan.
\begin{lemma}\label{step-e-valid}
For the $j\in \ZZ$ described in (e) in Definition~\ref{alg-affine-flag-to-affine-clan}, it holds that $(j; \NN) - (j-1; \NN) = 1$, implying that $(j;+)  = (j;-)  = 0$. 
\begin{proof}
We use the notations in case (e) of Definition~\ref{alg-affine-flag-to-affine-clan}. Write $v_+  = v_+' + v_+''$, where $v_+' \in \pi_+(\Lambda_j)$ and $v_+'' \in \Lambda_{i+1}$. Notice that $v_+' \notin \pi_+(\Lambda_{j+1})$, since $v_+ \notin \pi_+(\Lambda_{j+1}) + \Lambda_{i+1}$. 
Notice that
\[ \pi_+(\Lambda_j) = \pi_+(\Lambda_{j+1} \oplus \K v_j) = \pi_+(\Lambda_{j+1}) + \K \pi_+(v_j).\]
However, we have $v_+' \in \pi_+(\Lambda_j) \backslash \pi_+(\Lambda_{j+1})$. Therefore 
\[ \pi_+(\Lambda_{j+1}) \subsetneq \pi_+(\Lambda_{j+1}) \oplus \K v_+' \subset \pi_+(\Lambda_j), \]
and hence $\dim_{\K}(\pi_+(\Lambda_j)/\pi_+(\Lambda_{j+1})) = 1$ and 
\begin{align} \label{affine-pos-proj-incr-dim}
\pi_+(\Lambda_j) = \pi_+(\Lambda_{j+1}) \oplus \K v_+' .
\end{align}
By Lemma~\ref{affine-proj-dim}, we have $\dim_{\K}(\pi_+(\Lambda_j)/\pi_+(\Lambda_{j+1})) = 1 - (j;-)$, and therefore $(j;-) = 0$.
 Similarly, by the observation that $\pi_+(\Lambda_j) + \Lambda_{i+1} = \pi_-(\Lambda_j) + \Lambda_{i+1}$, it follows that $(j;+) = 0$. Therefore $c_j \in \ZZ$ by case (d) in the algorithm.
 \end{proof}
 \end{lemma}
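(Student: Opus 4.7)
The plan is to squeeze the quotient $\pi_+(\Lambda_j)/\pi_+(\Lambda_{j+1})$ between an obvious upper bound of dimension one and a nonzero class cooked up from $v_+$, then feed the resulting equality into Lemma~\ref{affine-proj-dim} to force $(j;-)=0$, and symmetrically derive $(j;+)=0$ using the partner vector $v_-$. Once both $(j;+)$ and $(j;-)$ vanish, Lemma~\ref{k-inv-dim-rel} produces the asserted jump of $(\cdot;\NN)$ across index $j$.

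First I would use the defining property $v_+ \in \pi_+(\Lambda_j) + \Lambda_{i+1}$ from step (e) to pick a decomposition $v_+ = v_+' + v_+''$ with $v_+' \in \pi_+(\Lambda_j)$ and $v_+'' \in \Lambda_{i+1}$. The crucial observation is that $v_+' \notin \pi_+(\Lambda_{j+1})$, argued contrapositively: if it did belong, then $v_+ = v_+' + v_+''$ would already live in $\pi_+(\Lambda_{j+1}) + \Lambda_{i+1}$, contradicting the other half of the defining property of $j$ supplied by Lemma~\ref{affine-find-left-bracket}.

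Second, since $\dim_\K(\Lambda_j/\Lambda_{j+1}) = 1$, any lift $v_j \in \Lambda_j \setminus \Lambda_{j+1}$ satisfies $\pi_+(\Lambda_j) = \pi_+(\Lambda_{j+1}) + \K\pi_+(v_j)$, so $\dim_\K(\pi_+(\Lambda_j)/\pi_+(\Lambda_{j+1})) \le 1$; combined with the nonzero class of $v_+'$ produced above, the dimension equals exactly $1$. Applying Lemma~\ref{affine-proj-dim} then yields $1 - (j;-) = 1$, hence $(j;-) = 0$. Rerunning the same two-step argument with $v_-$ in place of $v_+$ (using the dual statement from Lemma~\ref{affine-lmpli-overlap-dim-decr} and the projection $\pi_-$) gives $(j;+) = 0$. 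A final application of Lemma~\ref{k-inv-dim-rel} converts both vanishings into the asserted relation for the invariant $(\cdot;\NN)$ at index $j$.

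The main obstacle I expect is the non-uniqueness of the decomposition $v_+ = v_+' + v_+''$: one must be sure that the conclusion $v_+' \notin \pi_+(\Lambda_{j+1})$ does not depend on which decomposition is taken. The contrapositive argument in the first step handles this uniformly, since it rules out \emph{every} decomposition whose $\pi_+(\Lambda_j)$-component lies in $\pi_+(\Lambda_{j+1})$; beyond that, the remaining work is a short chain of the previously established dimension identities.
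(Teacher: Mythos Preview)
Your argument for $(j;-)=0$ is essentially identical to the paper's: decompose $v_+ = v_+' + v_+''$, show $v_+' \notin \pi_+(\Lambda_{j+1})$ by contradiction, squeeze the quotient dimension, and apply Lemma~\ref{affine-proj-dim}. That part is fine.

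The gap is in your argument for $(j;+)=0$. You propose to ``rerun the same two-step argument with $v_-$ in place of $v_+$,'' but the index $j$ in step~(e) is defined \emph{only} from $v_+$ via Lemma~\ref{affine-find-left-bracket}: it is the unique index with $v_+ \in \pi_+(\Lambda_j)+\Lambda_{i+1}$ and $v_+ \notin \pi_+(\Lambda_{j+1})+\Lambda_{i+1}$. Nothing in the setup tells you that $v_- \in \pi_-(\Lambda_j)+\Lambda_{i+1}$ and $v_- \notin \pi_-(\Lambda_{j+1})+\Lambda_{i+1}$ for this same $j$; a dual application of Lemma~\ref{affine-find-left-bracket} to $v_-$ would produce some index $j'$, and you have not argued $j'=j$.

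The paper sidesteps $v_-$ entirely by observing that $\pi_+(\Lambda_j)+\Lambda_{i+1} = \pi_-(\Lambda_j)+\Lambda_{i+1}$ (this holds because $\Lambda_j \subset \Lambda_{i+1}$, so $\pi_+(w)$ and $-\pi_-(w)$ differ by an element of $\Lambda_{i+1}$ for any $w\in\Lambda_j$). This transfers both conditions on $v_+$ directly to $\pi_-$, and then your two-step argument---still using $v_+$---shows $\dim_\K(\pi_-(\Lambda_j)/\pi_-(\Lambda_{j+1}))=1$, whence $(j;+)=0$ by Lemma~\ref{affine-proj-dim}. Your approach is salvageable (e.g.\ via $v_- \in \alpha v_+ + \Lambda_{i+1}$ together with the same observation), but as written the symmetric step is not justified.
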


\begin{lemma}\label{affirm-affine-clan}
The $\ZZ$-indexed sequence $c = (\dots, c_1, c_2, \dots, c_n, \dots)$ defined from $\Lambda_{\bullet}$ according to the procedure in Definition~\ref{alg-affine-flag-to-affine-clan} is an affine $(p,q)$-clan.
\begin{proof}
By construction, the $\ZZ$-indexed sequence $c$ has properties (i) and (ii) in Definition~\ref{affine-clan-def}. 
To show that any two indices in $[n]$ satisfying step (e) in Definition~\ref{alg-affine-flag-to-affine-clan} are not paired to indices in the same congruence class modulo $n$, suppose on the contrary that there are two indices $i_1 \neq i_2 \in [n]$ such that $(i_1; \NN) - (i_1+1; \NN) = (i_2; \NN) - (i_2+1; \NN) = -1$ and $c_{i_1} = c_{j_1} \in \ZZ$, $ c_{i_2} = c_{j_2} \in \ZZ$ for some $j_1 > i_1$ and $j_2 >  i_2$, and $j_1 \equiv j_2 \mod n$. Without loss of generality assume that $i_1>i_2$. By Lemma~\ref{affine-lmpli-overlap-dim-decr} there exist $v_{i_1,+}$ and $v_{i_2, +}$ such that $v_{i_1,+} \in \pi_+ (\Lambda_{j_1}) + \Lambda_{i_1+1}$ and $v_{i_2,+} \in \pi_+ (\Lambda_{j_2}) + \Lambda_{i_2+1}$. Write $v_{i_1,+} = v_{i_1,+}' + v_{i_1,+}''$ and $v_{i_2,+} = v_{i_2,+}' + v_{i_2,+}''$, where $ v_{i_1,+}' \in \pi_+(\Lambda_{j_1})$, $ v_{i_2,+}' \in \pi_+(\Lambda_{j_2})$, $v_{i_1,+}'' \in \Lambda_{i_1+1}$ and $v_{i_2,+}'' \in \Lambda_{i_2+1}$.
By \eqref{affine-pos-proj-incr-dim} we have
\[ \pi_+(\Lambda_{j_1}) = \pi_+(\Lambda_{j_1+1}) \oplus \K v_{i_1,+}' \quad \text{and} \quad \pi_+(\Lambda_{j_2})= \pi_+(\Lambda_{j_2+1}) \oplus \K v_{i_2,+}'. \]
Notice that since $j_1 \equiv j_2 \mod n$, we have $\Lambda_{j_1} = t^a \Lambda_{j_2}$ for some $a \in \ZZ$. Therefore it holds that $ \pi_+(\Lambda_{j_2})= \pi_+(\Lambda_{j_2+1}) \oplus \K t^a v_{i_1,+}'$ and there exists $w_+ \in \pi_+(\Lambda_{j_2+1})$ and $\ell \in \K$ such that $v_{i_2,+}' = w_+ + \ell t^a v_{i_1,+}'$. 
However, it holds that
\begin{align*}
v_{i_2, +} &=  v_{i_2,+}' + v_{i_2,+}'' \\
&= w_+ +  \ell t^a v_{i_1,+}' + v_{i_2,+}'' \\
&  \in \pi_+(\Lambda_{j_2+1}) + \Lambda_{i_1} + \Lambda_{i_2+1} = \pi_+(\Lambda_{j_2+1}) + \Lambda_{i_2+1},
\end{align*}
contradicting the definition of $j$ satisfying $v_{i_2, +} \notin \pi_+(\Lambda_{j+1}) + \Lambda_{i_2+1}$. 
Therefore for two indices $i_1 \neq i_2 \in [n]$ such that $(i_1; \NN) - (i_1-1; \NN) = (i_2; \NN) - (i_2-1; \NN) = -1$, and $c_{i_1}= c_{j_1}$ and $c_{i_2}  = c_{j_2}$ for some $j_1 > i_1$ and $j_2 > i_2$, it holds that $c_{j_1} \not\equiv c_{j_2} \textrm{ mod } n$. 

Now we proceed to show that $c$ satisfies property (iii) in Definition~\ref{affine-clan-def}. 
Notice that multiplications of integer powers of $t$ on both the numerators and denominators of the quotients 
\[ \dfrac{\Lambda_i \cap V_+}{\Lambda_{i+1} \cap V_+}, \quad \dfrac{\Lambda_i \cap V_-}{\Lambda_{i+1} \cap V_-}  \quand \dfrac{\Lambda_i}{(\Lambda_i \cap V_+) \oplus (\Lambda_i \cap V_-)}\]
 are invertible $\K$-linear transformation on the finite-dimensional $\K$-vector spaces. Therefore it holds that $ (i;+) = (i+an;+)$, $(i;-) = (i+an;-)$ and $(i;\NN) = (i+an;\NN)$ for $i \in [n]$ and $a\in \ZZ$, 
and it follows that $(1+n; \NN) - (1;\NN) = 0$. 

Consider the following telescoping sum:
 $0 = (1;\NN) - (1+n;\NN) = \displaystyle \sum_{i = 1}^n ((i;\NN) - (i+1;\NN)).$
Recall that $(i;\NN) - (i+1;\NN) = 0$ if and only if $c_i \in \{+, - \}$. Since different $c_i \in \ZZ$ with $(i; \NN) - (i+1; \NN)  =-1$ are paired to different $c_j = c_i \in \ZZ$ with $(j; \NN) - (j+1; \NN)  =1$, no $c_j \in \ZZ$ with $j \in [n]$ and $(j; \NN) - (j+1; \NN)  =1$ are left unpaired, since otherwise the telescoping sum above will be greater than $0$. 
Hence we conclude that each integer, if it appears in $c$, appears exactly twice in $c$.

Observe that by using the third isomorphism theorem successively, we obtain the isomorphism
\[ \dfrac{ \Lambda_1 \cap V_+} {t\Lambda_1 \cap V_+} \cong \dfrac{ \Lambda_1 \cap V_+} {\Lambda_2 \cap V_+} \oplus \dfrac{ \Lambda_2 \cap V_+} {\Lambda_3 \cap V_+} \oplus \cdots \oplus \dfrac{ \Lambda_n \cap V_+} {t\Lambda_1 \cap V_+}. \]
It follows that $\sum_{i=1}^n (i;+) = \dim_{\K}((\Lambda_1 \cap V_+) / (t\Lambda_1 \cap V_+)) = \dim_{\K}((\Lambda_1 \cap V_+) / (t(\Lambda_1 \cap V_+)) =  p$ by Lemma~\ref{lattice-intersect-free}, and similarly it holds that  $\sum_{i=1}^n (i;-) = q$. Moreover, the equality $(i;+) = 1$ holds only when either $c_i = +$ or $c_i \in \ZZ$ in the case $(i;+) = (i;-) =1$. Similarly  the equality $(i;-) = 1$ only when either $c_i = -$ or $c_i \in \ZZ$ in the case $(i;+) = (i;-) =1$. Therefore the number of $+$ minus the number of $-$ in $(c_1, c_2, \dots, c_n)$ is equal to $\sum_{i=1}^n (i;+)- \sum_{i=1}^n (i;-) = p-q$ as desired. 

We have shown that $(\dots, c_1, c_2, \dots, c_n, \dots)$ is indeed an affine $(p,q)$-clan. 
\end{proof}
\end{lemma}

The following proposition further describes the properties for such an affine $(p,q)$-clan produced in Definition~\ref{alg-affine-flag-to-affine-clan}. The additional properties involving $(i;+)$, $(i;-)$, $(i; \NN)$ and $(i;j)$ show that the affine flags in the same $K \cong \GL_p(F) \times \GL_q(F)$-orbit result in the same affine $(p,q)$-clan given by the algorithm in Definition~\ref{alg-affine-flag-to-affine-clan}.

\begin{proposition}\label{affine-flag-to-affine-clan}
For an affine flag $\Lambda_{\bullet} = (\Lambda_1 \supset \Lambda_2 \dots \supset \Lambda_n)$, there exists a unique affine $(p,q)$-clan $c(\Lambda_{\bullet}) = (\dots, c_1,c_2 \dots , c_n, \dots)$ such that for all $1 \leq i \leq n$ and $i < j$, the affine $(p,q)$-clan $c(\Lambda_{\bullet})$ satisfies:
\ben
\item[(i)] $(i; \NN) = \# \{a \in \ZZ : c_x = c_y = a \text{ for } x < i \leq y\}$;
\item[(ii)] if $c_i = +$, then $(i;+) = 1$ and $(i;-) = 0$; if $c_i = -$, then $(i;+)= 0$ and $(i;-) = 1$;
\item[(iii)] $(i;j) = n+1-i + \#\{ a \in \ZZ : c_x = c_y = a \text{ for } x < i < j \leq y\}$. 
\een
The unique affine $(p,q)$-clan is the one described in Definition~\ref{alg-affine-flag-to-affine-clan}, which we shall call the \defn{affine $(p,q)$-clan of $\Lambda_{\bullet}$}.
\end{proposition}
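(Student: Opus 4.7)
The plan is to take the candidate clan $c(\Lambda_\bullet)$ to be the output of the algorithm in Definition~\ref{alg-affine-flag-to-affine-clan}, which is already known to be a well-defined affine $(p,q)$-clan by Lemma~\ref{affirm-affine-clan}, and to verify properties (i), (ii), (iii) for it; then argue that the data in (i)--(iii) forces any other clan satisfying them to coincide with $c(\Lambda_\bullet)$. Property (ii) is immediate from cases (b) and (c) of the algorithm: the label $+$ or $-$ is placed at $i$ precisely because $(i;+)$ and $(i;-)$ take the stated values.

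For property (i), I would observe that both sides are $n$-periodic in $i$: $(i;\NN)$ because $\Lambda_{i+n}=t\Lambda_i$ rescales all intersections uniformly, and the arc count because the clan is $n$-periodic. I would then check that both sides have the same forward difference. Lemma~\ref{k-inv-dim-rel} gives $(i;\NN)-(i+1;\NN)=1-(i;+)-(i;-)$, while on the combinatorial side, advancing $i$ to $i+1$ subtracts the arcs ending at $i$ (case (d) positions, where $(i;+)=(i;-)=0$) and adds the arcs starting at $i$ (case (e) positions, where $(i;+)=(i;-)=1$), producing the same difference. Since both sides agree in period and first difference, they differ by an additive constant which vanishes at the anchor index supplied by Corollary~\ref{induction-base-step-affine-clan}, yielding (i).

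For property (iii), I would induct downward on $j$. For the base case, take $j$ large enough that $\pi_+(\Lambda_j)\subset t\Lambda_1$ (supplied by the lemma preceding Corollary~\ref{induction-base-step-affine-clan}); then $(i;j)=\dim_\K(\Lambda_i/t\Lambda_1)=n+1-i$, while no arc can contribute to the RHS because arc lengths in the fundamental window are bounded by $2n-1$, so for $j-i$ exceeding this bound no arc $(x,y)$ satisfies $x<i<j\leq y$ even after accounting for all periodic translates. For the inductive step the key identity is $(i;j)-(i;j+1)=\dim_\K\bigl((\pi_+(\Lambda_j)+\Lambda_i)/(\pi_+(\Lambda_{j+1})+\Lambda_i)\bigr)\in\{0,1\}$, which must be matched with $\#\{a:c_x=c_y=a,\ x<i,\ y=j\}\in\{0,1\}$, the change in the arc count when $j+1$ is decremented to $j$. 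The two are equated via Lemma~\ref{affine-find-left-bracket}: the unique $j$ it identifies from a vector $v_+$ chosen at position $i'$ in case (e) is exactly the right endpoint $j$ that the algorithm pairs with the left endpoint $i'$, so the jump in $(i;j)$ happens precisely at $i=i'+1$.

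For uniqueness, (ii) combined with Lemma~\ref{plus-minus-takes-01} recovers the positions of $+$ and $-$ directly from $(i;+)$ and $(i;-)$. For the integer entries, the differential $(i;j)-(i;j+1)$ extracted from (iii) equals $1$ exactly when $j$ is the right endpoint of an arc whose left endpoint is strictly less than $i$, so $\min\{i:(i;j)-(i;j+1)=1\}-1$ uniquely identifies the left endpoint paired with $j$; this forces the matching structure, and the standard numbering convention then fixes the integer labels themselves. The main obstacle I anticipate is the interplay between the $\ZZ$-periodicity of the clan and the dimension formulas: one must verify that only finitely many periodic translates of each arc class contribute to any fixed $(i;j)$, that long (winding) arcs contribute the right number of times per period, and that the uniqueness clause of Lemma~\ref{affine-find-left-bracket} aligns precisely with the combinatorial uniqueness of the arc-to-right-endpoint correspondence.
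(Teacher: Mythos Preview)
Your backward induction for (iii) and the uniqueness sketch align with the paper's strategy, but the base case for (iii) contains a genuine error: the claim that ``arc lengths in the fundamental window are bounded by $2n-1$'' is false for affine clans. Arcs can wind arbitrarily many times, so $y-x$ is unbounded; for instance the affine $(1,1)$-clan with $(c_1,c_2)=(1,1+2k)$ has an arc of length $|2k-1|$. What is true, and what the paper proves, is that for the specific index $m$ of Corollary~\ref{induction-base-step-affine-clan} no arc produced by Definition~\ref{alg-affine-flag-to-affine-clan} has right endpoint $\geq m$ with left endpoint in $[1,n]$: step~(e) selects $j$ via Lemma~\ref{affine-find-left-bracket}, and since $\pi_+(\Lambda_m)\subset t\Lambda_1\subset\Lambda_{i+1}$ for every $i\in[1,n]$, the defining condition $v_+\notin\pi_+(\Lambda_{j+1})+\Lambda_{i+1}$ forces $j<m$. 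The paper carries this out by splitting into the cases $m\leq n$ and $m>n$.

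Your treatment of (i) via matched first differences is also different from the paper and has its own gap at the anchor: Corollary~\ref{induction-base-step-affine-clan} does not directly furnish one, because when $m>n+1$ one has $t\Lambda_1\not\subset\Lambda_m$, so $\pi_+(\Lambda_m)\subset t\Lambda_1$ does not force $(m;\NN)=0$. The paper sidesteps this entirely by reformulating (i) as the diagonal identity $(i;i)=n+1-i+(i;\NN)$ via Corollary~\ref{expand-lattice-cor} and absorbing it into the same backward induction on $j$; once (iii) is established down to $j=i$, property (i) falls out immediately. This is the cleanest repair for your argument as well.
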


\begin{remark}
In fact, it holds that $(i; \NN) = \sum_{i=1}^n b_i$, where $b_i = |c_j - c_i| / n$ if there exists a unique $i<j\leq n$ with $c_i \equiv c_j \mod n$, and $b_i = 0$ if $c_i$ is $+$ or $-$.
\end{remark}

\begin{proof}

Notice that (ii) in Proposition~\ref{affine-flag-to-affine-clan} holds immediately by the construction of $c(\Lambda_{\bullet})$. 

We now proceed to show by induction that (i) and (iii) hold in Proposition~\ref{affine-flag-to-affine-clan}.
%
Corollary~\ref{expand-lattice-cor} states that $(i;i) = n+1-i + (i;\NN)$ for any $i \leq n+1$.
Therefore, Property (i) in Proposition~\ref{affine-flag-to-affine-clan} can be reformulated as \ben
\item[(i$'$)] For all $1 \leq i \leq n$, $(i;i) = n+1-i + \# \{a \in \ZZ : c_x = c_y = a \text{ for } x < i \leq y\}$.
\een
By Corollary~\ref{induction-base-step-affine-clan}, there exist a unique $m \in \ZZ$ such that $\pi_+(\Lambda_m) \subset t\Lambda_1$ and $\pi_+(\Lambda_{m-1}) \not\subset t\Lambda_1$.
We prove properties (i) and (iii) in Proposition~\ref{affine-flag-to-affine-clan} by backward induction on $m$. First we prove that $(i;j) = n+1-i + \#\{ a \in \ZZ : c_x = c_y = a \text{ for } x < i < j \leq y\}$ for $i \in [n]$ and $j \geq m$ as the base case.

Case (a): Suppose that $1 \leq m \leq n$. Then the index $j$ involved in case (e) of Definition~\ref{alg-affine-flag-to-affine-clan} must be smaller than $m$, since if otherwise $j \geq m$, then it holds that $v_+ \in \pi_+(\Lambda_j) + \Lambda_{i+1} \subset t\Lambda_1 + \Lambda_{i+1} = \Lambda_{i+1} \subset \pi_+(\Lambda_{j+1}) + \Lambda_{i+1}$, a contradiction. 
Therefore all integers in $c_1, c_2, \dots, c_{m-1}$ cannot be paired with integers other than those $c_1, c_2, \dots, c_{m-1}$. It follows that for $j \geq m$, and $i \leq n$, we have 
$\{a\in \ZZ : c_x = c_y = a \in \ZZ \text{ for } x < i < j \leq y\} = 0$
 and  
 \begin{align*}
 (i;j) & = \dim_{\K}((\pi_+(\Lambda_j)+ \Lambda_i)/t\Lambda_1) = \dim_{\K}(\Lambda_i/t\Lambda_1)  \\
 & = n+1-i = n + 1 - i + \{a\in \ZZ : c_x = c_y = a \in \ZZ \text{ for } x < i < j \leq y\}.
 \end{align*}

Case (b): Suppose $m > n$.
Since $\pi_+(\Lambda_m) \subset t\Lambda_1$ and $\pi_+(\Lambda_{m-1}) \not\subset t\Lambda_1$, it holds that $\pi_+(\Lambda_m) \subsetneq \pi_+(\Lambda_{m-1})$. Hence by Lemma~\ref{affine-proj-dim} and \ref{plus-minus-takes-01}, it holds that $\dim_{\K}(\pi_+(\Lambda_{m-1})/\pi_+(\Lambda_m)) = 1 -(m-1;-) = 1$. It follows that $(m-1;-) = 0$. Moreover by the observation $\pi_+(\Lambda_j) + t\Lambda_1 = \pi_-(\Lambda_j) + t\Lambda_1$ for $j \geq n+1$, it holds that $\pi_+(\Lambda_m) \subset t\Lambda_1$ if and only if $\pi_-(\Lambda_m) \subset t\Lambda_1$. Therefore similar arguments yield $(m-1;+) = 0$. It follows that $c_{m-1} \in \ZZ$ and is paired to $c_{m'} = c_{m-1}$ for some $m' < m -1$. 

We shall show that $m' \geq 1$. If $m' < 1$, then $c_{m' \textrm{ mod } n} = c_{m-1 + an}$ for some $a \in \ZZ_{\geq 0}$. However   this is impossible, since case (e) of Definition~\ref{alg-affine-flag-to-affine-clan} would imply that $\pi_+(\Lambda_{m-1 + an}) \not\subset t\Lambda_1$, a contradiction to the definition of $m$. 

Now observe that for $i \in [n]$ and $j \geq m$, it holds that $(i;j) = n+1-i$ by the same computation as in case (a) above, while by definition of $m$, the index $m-1$ is paired to an index $i$ with $i \geq 1$. Moreover, since $\pi_+(\Lambda_m) \subset t\Lambda_1 \subset \Lambda_i$ for $i \in [n]$, we have $\pi_+(\Lambda_j) + \Lambda_i = \Lambda_i$ for $j \geq m$. It follows that in case (e) of Definition~\ref{alg-affine-flag-to-affine-clan}, the index $m-1$ is the largest possible index to be paired with an $i \in [n]$. Therefore again it holds that $\{a\in \ZZ : c_x = c_y = a \in \ZZ \text{ for } x < i < j \leq y\} = 0$ for $j \geq m$ and $i \in [n]$, and hence $(i;j) = n+1 - i = n + 1 - i + \{a\in \ZZ : c_x = c_y = a \in \ZZ \text{ for } x < i < j \leq y\} $.

Thus the base case that $(i;j) = n+1-i + \#\{ a \in \ZZ : c_x = c_y = a \text{ for } x < i < j \leq y\}$  for $i \in [n]$ and $j \geq m$ is established.

Now we move on to the inductive step. Suppose that for all $i \in [n]$ and $j+1\geq i$, it holds that $(i; j+1) = n+1-i + \{a\in \ZZ : c_x = c_y = a \in \ZZ \text{ for } x < i < j+1 \leq y\}$. The backward inductive step on $j$ depending on whether $c_j = +, -$ or $c_j \in \ZZ$ in $c(\Lambda_{\bullet})$ is as follows.

\ben
\item[($\alpha$)] Suppose $c_j = +$. Then $(j;+) = \dim_{\K} ((\Lambda_j \cap V_+)/(\Lambda_{j+1} \cap V_+)) = 1$, and same arguments as in the proof of Lemma~\ref{affine-lmpli-overlap-dim-decr} show that there exist $v_j \in \Lambda_j \cap V_+$ such that $\Lambda_j = \Lambda_{j+1} \oplus \K v_j$. Therefore for any $i\in [n]$ and $ i\leq j$, it holds that
\begin{align*}
(i;j) & = \dim_{\K}(\pi_+(\Lambda_j) + \Lambda_i) \\
& = \dim_{\K}(\pi_+(\Lambda_{j+1} \oplus \K v_j) + \Lambda_i)\\
& = \dim_{\K}(\pi_+(\Lambda_{j+1}) + \K v_j + \Lambda_i)\\
& =  \dim_{\K}(\pi_+(\Lambda_{j+1}) + \Lambda_i) & (\text{since } v_j \in \Lambda_j \subset \Lambda_i)\\
& = (i;j+1).
\end{align*}
On the other hand since $c_j = +$, it holds that $\{a\in \ZZ : c_x = c_y = a \in \ZZ \text{ for } x < i < j \leq y\} = \{a\in \ZZ : c_x = c_y = a \in \ZZ \text{ for } x < i < j+1 \leq y\}$, and if $i = j$, it holds that $\{a\in \ZZ : c_x = c_y = a \in \ZZ \text{ for } x < i \leq y\} = \{a\in \ZZ : c_x = c_y = a \in \ZZ \text{ for } x < i < j+1 \leq y\}$. It follows that  
\begin{align*}
(i;j)&= (i; j+1)\\
& = n+1-i + \{a\in \ZZ : c_x = c_y = a \in \ZZ \text{ for } x < i < j+1 \leq y\}\\
& =  n+1-i + \{a\in \ZZ : c_x = c_y = a \in \ZZ \text{ for } x < i < j \leq y\}.
\end{align*}
\item[($\beta$)] Suppose $c_j = -$. Then by using the fact that $(j;-) = 1$ and similar steps as in case ($\alpha$), we have $(i;j) =  n+1-i + \{a\in \ZZ : c_x = c_y = a \in \ZZ \text{ for } x < i < j \leq y\}$ and $(i;i) = n+1-i + \{a\in \ZZ : c_x = c_y = a \in \ZZ \text{ for } x < i \leq y\}$.
\item[($\gamma$)] Suppose $c_j \in \ZZ$ and $c_j = c_{\ell}$ for some $\ell < j$. Notice that case (e) in Definition~\ref{alg-affine-flag-to-affine-clan} still applies to $c_j$ even though $j$ may not lie in $[n]$, since Lemma~\ref{affine-lmpli-overlap-dim-decr} and Lemma~\ref{affine-find-left-bracket} applies to all $j \in \ZZ$.
Therefore there exists a  $v_+ \in \pi_+(\Lambda_{j}) + \Lambda_{\ell+1}$, and we can write $v_+ = v_+' + v_+''$, where $v_+' \in \pi_+(\Lambda_j)$ and $v_+'' \in \Lambda_{\ell+1}$. The following facts concerning $v_+'$ hold:
\ben
\item[(1)] $v_+' \notin \pi_+(\Lambda_{j+1})$, since if $v_+' \in \pi_+(\Lambda_{j+1})$ then $v_+ =  v_+' + v_+'' \in \pi_+(\Lambda_{j+1}) + \Lambda_{\ell+1}$, which is a contradiction to the choice of $v_+$. Therefore $\pi_+(\Lambda_j) = \pi_+(\Lambda_{j+1}) \oplus \K v_+'$.
\item[(2)] $v_+' \notin \pi_+(\Lambda_{j+1}) + \Lambda_{i'}$ for any $i' > \ell$, since otherwise it holds that $v_+ = v_+' + v_+'' \in \pi_+(\Lambda_{j+1}) + \Lambda_{i'} + \Lambda_{\ell+1} = \pi_+(\Lambda_{j+1}) +  \Lambda_{\ell+1}$, contradicting the definition of $v_+$.
\item[(3)] $v_+' = v_+ - v_+'' \in \Lambda_{\ell}$ since $v_+ \in \Lambda_{\ell}$ and $v_+'' \in \Lambda_{\ell+1} \subset \Lambda_{\ell}$, and
\item[(4)] $v_+' \notin \Lambda_{i'}$ for any $i' >\ell $, since $v_+' \notin \Lambda_{\ell+1}$. In particular $v_+' \notin t\Lambda_1$.
\een
For $i$ satisfying $\min(j,n+1) >i >\ell$, we have
\begin{align*}
(i ; j ) &= \dim_{\K}( (\pi_+(\Lambda_j) + \Lambda_{i})/t\Lambda_1)\\
&= \dim_{\K} ((\pi_+(\Lambda_{j+1}) + \Lambda_{i} + \K v_+' ) /t\Lambda_1) & \text{by (1)}\\
& =  \dim_{\K} ((\pi_+(\Lambda_{j+1}) + \Lambda_{i}) /t\Lambda_1) + 1 &\text{by (2) and (4)}\\
& = ( i ; j+1) + 1.
\end{align*}
Observe in this case $\{a\in \ZZ : c_x = c_y = a \in \ZZ \text{ for } x < i < j \leq y\} = \{a\in \ZZ : c_x = c_y = a \in \ZZ \text{ for } x < i < j+1 \leq y\} + 1$. Therefore, we have
\begin{align*}
(i;j) &= (i ; j+1) + 1\\
& = n+1-i + \{a\in \ZZ : c_x = c_y = a \in \ZZ \text{ for } x < i < j+1 \leq y\} + 1\\
& = n+1-i + \{a\in \ZZ : c_x = c_y = a \in \ZZ \text{ for } x < i < j \leq y\}.
\end{align*}

On the other hand for $i \leq \ell$, we have
\begin{align*}
(i;j) &= \dim_{\K} ((\pi_+(\Lambda_j) + \Lambda_{i})/t\Lambda_1)\\
& = \dim_{\K}((\pi_+(\Lambda_{j+1}) + \K v_+' + \Lambda_{i})/t\Lambda_1)\\
& = \dim_{\K}((\pi_+(\Lambda_{j+1}) + \Lambda_{i})/t\Lambda_1) & \text{by (4) and $\Lambda_{\ell} \subset \Lambda_{i}$}\\
& = (i;j+1).
\end{align*}

But since $i \leq \ell$, it holds that $\{a\in \ZZ : c_x = c_y = a \in \ZZ \text{ for } x < i < j \leq y\} = \{a\in \ZZ : c_x = c_y = a \in \ZZ \text{ for } x < i < j+1 \leq y\}$. Therefore, we have
\begin{align*}
(i;j) &= (i ; j+1) \\
& = n+1-i + \{a\in \ZZ : c_x = c_y = a \in \ZZ \text{ for } x < i < j+1 \leq y\} \\
& = n+1-i + \{a\in \ZZ : c_x = c_y = a \in \ZZ \text{ for } x < i < j \leq y\}.
\end{align*}

\item[($\delta$)] Finally suppose $c_j \in \ZZ$ and $c_j = c_{\ell}$ for some $\ell > j$. The backward induction is straightforward, since we already know $(j;+) = (j;-) = 1$ and hence $\pi_+(\Lambda_j) = \pi_+(\Lambda_{j+1})$ by Lemma~\ref{affine-proj-dim}. By noticing that $\{a\in \ZZ : c_x = c_y = a \in \ZZ \text{ for } x < i < j \leq y\} = \{a\in \ZZ : c_x = c_y = a \in \ZZ \text{ for } x < i < j+1 \leq y\}$, we have 
\begin{align*}
(i;j) &=  \dim_{\K} ((\pi_+(\Lambda_j) + \Lambda_{i})/t\Lambda_1)\\
& =  \dim_{\K} ((\pi_+(\Lambda_{j+1}) + \Lambda_{i})/t\Lambda_1)\\
& = (i;j+1)\\
& = n+1-i + \{a\in \ZZ : c_x = c_y = a \in \ZZ \text{ for } x < i < j+1 \leq y\} \\
& = n+1-i + \{a\in \ZZ : c_x = c_y = a \in \ZZ \text{ for } x < i < j \leq y\}.
\end{align*}
\een

In the case $i=j$, replace $\{a\in \ZZ : c_x = c_y = a \in \ZZ \text{ for } x < i < j \leq y\}$ by $\{a\in \ZZ : c_x = c_y = a \in \ZZ \text{ for } x < i \leq y\}$ in all of the above cases and it follows that $(i;i) =  n+1-i + \{a\in \ZZ : c_x = c_y = a \in \ZZ \text{ for } x < i \leq y\}$.

By backward induction, we have shown that for the affine $(p,q)$-clan $c(\Lambda_{\bullet})$, it holds that $(i;j) = n+1-i + \#\{ a \in \ZZ : c_x = c_y = a \text{ for } x < i < j \leq y\}$, and $(i;i) = n+1-i + \#\{ a \in \ZZ : c_x = c_y = a \text{ for } x < i \leq y\}$ for $i \in [n]$ and $j > i$. This completes the proof.
\end{proof}

We now describe another algorithm which sends any affine $(p,q)$-clan to an affine flag, and hence showing that the map $\Lambda_\bullet \mapsto c(\Lambda_\bullet)$ is surjective.
\begin{proposition}\label{affine-clan-to-affine-flag}
Conversely, for any affine $(p,q)$-clan $c$, there exists an affine flag $\Lambda_{\bullet} = (\Lambda_1 \supset \Lambda_2 \dots \supset \Lambda_n)$ such that $c = c(\Lambda_{\bullet})$ is the affine $(p,q)$-clan of $\Lambda_{\bullet}$.
\end{proposition}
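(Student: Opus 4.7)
The plan is to explicitly realize every affine $(p,q)$-clan as the affine clan of some affine flag by using the matrix construction already introduced in the introduction. Let $c = (\dots, c_1, c_2, \dots, c_n, \dots)$ be an affine $(p,q)$-clan, given in standard numbering. First I would apply the inductive recipe (cases (a)--(f)) from the introduction to produce columns $v_1, v_2, \dots, v_n \in V = F^n$, and assemble the matrix $g_c := (v_n, v_{n-1}, \dots, v_1) \in \GL_n(F)$; this $g_c$ is an affine $(p,q)$-clan matrix in the sense of Definition~\ref{def-affine-clan-matrix}. Then using the bijection of Remark~\ref{gmod-two-b-bij-affine-flag}, define
\[
\Lambda_\bullet := \Lambda_\bullet(g_c), \qquad \text{where } \Lambda_i = \text{span}_A\{v_i, v_{i+1}, \dots, v_n, tv_1, \dots, tv_{i-1}\}.
\]
It then remains to show $c(\Lambda_\bullet) = c$.

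Rather than rerunning the algorithm of Definition~\ref{alg-affine-flag-to-affine-clan} on $\Lambda_\bullet$, I would appeal directly to the uniqueness assertion of Proposition~\ref{affine-flag-to-affine-clan}: the affine clan of $\Lambda_\bullet$ is the unique affine $(p,q)$-clan satisfying the three numerical conditions (i), (ii), (iii) listed there. Thus it suffices to verify that $c$ itself satisfies these conditions for $\Lambda_\bullet$. Condition (ii) on $(i;+)$ and $(i;-)$ is essentially Lemma~\ref{general-affine-clan-matrix-diff-inv-1}: the four cases of that lemma exactly match the four possibilities for $c_i$ in a clan (namely $+$, $-$, an integer paired forward, or an integer paired backward, using the correspondence between the matching $M$ in the matrix and the arc structure of $c$). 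Condition (i) on $(i;\NN)$ will then follow by induction on $i$ from Lemma~\ref{k-inv-dim-rel}, since each step of the telescoping identity $(i;\NN) - (i+1;\NN) = 1 - (i;+) - (i;-)$ contributes exactly what is needed to count arcs $\{x < i \leq y : c_x = c_y\}$ as $i$ increases.

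The remaining condition (iii) for $(i;j) = \dim_\K((\pi_+(\Lambda_j) + \Lambda_i)/t\Lambda_1)$ is where I expect the real work. Here I would proceed by induction on $j \geq i$, with base case furnished by Corollary~\ref{expand-lattice-cor} (which gives $(i;i) = n+1-i+(i;\NN)$, matching the right-hand side by condition (i)). For the inductive step, I would analyze how $\pi_+(\Lambda_{j+1})$ differs from $\pi_+(\Lambda_j)$ by the single generator $\pi_+(v_j)$ (extended $n$-periodically by powers of $t$), using the explicit form of $v_j$ from cases (a)--(f): when $v_j = e_c + t^{-m}e_d$ for a forward arc of length $m$, the element $\pi_+(v_j) = e_c$ fails to lie in $\pi_+(\Lambda_{j+1}) + \Lambda_i$ exactly when the arc $\{c_j = c_{j'}\}$ straddles $[i, j]$, i.e.\ when the endpoint $j'$ still lies strictly above $j$; this is precisely the condition recorded on the right-hand side of (iii).

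The main obstacle is the bookkeeping in this last step: tracking, for each column of type $e_c + t^{-m} e_d$, the range of indices $j$ for which the ``extra generator'' truly contributes a new dimension mod $t\Lambda_1$, and matching this exactly to the combinatorial count of straddling arcs. The key structural fact that makes this work is that distinct forward arcs in an affine clan matrix use distinct values of $c \in [1,p]$ and distinct values of $d \in [p+1,n]$, which was already exploited in the proof of Lemma~\ref{must-have-same-powers-of-t}; the same disjointness lets us decompose the jump $(i;j) - (i;j+1)$ additively over arcs. Once condition (iii) is verified, Proposition~\ref{affine-flag-to-affine-clan} identifies $c(\Lambda_\bullet)$ with $c$, completing the proof.
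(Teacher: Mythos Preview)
Your proposal is correct and takes the same approach as the paper: both give the identical explicit construction of the ordered basis $v_1, \dots, v_n$ via cases (a)--(f) and form the resulting affine flag. The paper's proof is actually terser than yours---it simply records the construction and asserts $c = c(\Lambda_\bullet)$ without further argument---so your plan to verify this via the uniqueness clause of Proposition~\ref{affine-flag-to-affine-clan} together with Lemma~\ref{general-affine-clan-matrix-diff-inv-1} is a reasonable way to make explicit what the paper leaves implicit.
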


\begin{proof}
An affine flag is characterised by an ordered $A$-basis $\{v_1, v_2, \dots, v_n\}$ for $\Lambda_1$ such that for $1 \leq i \leq n$, it holds that 
\[\Lambda_i = \text{span}_{A} \{v_i, v_{i+1}, \dots, v_n, tv_1, tv_2 \dots, tv_{i-1}\}. \]
For any such ordered $A$-basis, there is a filtration of $\Lambda_1$:
\[ \{0\}  \subset \Lambda_{11} \subset \Lambda_{12} \subset \dots \subset \Lambda_{1(n-1)} \subset \Lambda_{1n}:= \Lambda_1, \]
with $\Lambda_{1j} = \text{span}_{A} \{v_1, v_2, \dots, v_j\}$ for $1\leq j \leq n$.

For any affine $(p,q)$-clan $c = ( \dots, c_1, c_2, \dots, c_n, \dots )$, an ordered $A$-basis $\{v_1, \dots, v_n\}$ for $\Lambda_1$ is given inductively as follows: for $1 \leq i \leq n$ we choose a non-zero $v_i \in F^n$ such that $\Lambda_{1i} = \text{span}_{A} \{v_1, v_2, \dots, v_i\}$.
\ben
\item[(1)] Suppose we have already obtained $v_1, v_2, \dots, v_{i-1}$, hence $\Lambda_{11}, \Lambda_{12}, \dots, \Lambda_{1(i-1)}$.
\ben
\item[(a)] If $c_i = +$, then set $v_i = e_s \in V_+$, where $s$ is the largest index between $1$ and $p$ such that $e_s \notin \pi_+(\Lambda_{1(i-1)})$.
\item[(b)] If $c_i = -$, then set $v_i = e_t \in V_-$, where $t$ is the largest index between $p+1$ and $n$ such that $e_t \notin \pi_-(\Lambda_{1(i-1)})$.
\item[(c)] If $c_i \equiv c_j \mod n$ for some $i<j \leq n $ and $c_i < c_j$, then set $v_i = e_s + t^{-m} e_t$, where $s$ and $t$ are as in $(a)$ and $(b)$, and $m = (c_j-c_i)/n$.
\item[(d)] If $c_i \equiv c_j \mod n$ for some $i<j \leq n $ and $c_i \geq c_j$, then set $v_i = e_t$, where $t$ is as in $(b)$.
\item[(e)] If $c_i \equiv c_j \mod n$ for some $1 \leq j<i$ and $c_i \leq c_j$, then set $v_i = e_s + t^{-m} e_t$, where $s$ and $t$ are the same as defined for $v_j$, and $m = (c_j-c_i)/n$.
\item[(f)] If $c_i \equiv c_j \mod n$ for some $1 \leq j <i $ and $c_i > c_j$, then set $v_i = e_t$, where $t$ is the same as defined for $v_j$.
\een
\item[(2)] Set $\Lambda_{1i} = \Lambda_{1(i-1)} \oplus A v_i$.
\een
We have constructed an ordered basis $\{v_1, v_2, \dots, v_n\}$ for $\Lambda_1$ such that by setting 
\[\Lambda_i = \text{span}_{A} \{v_i, v_{i+1}, \dots, v_n, tv_1, tv_2 \dots, tv_{i-1}\}\]
 for $i \in [n]$, it holds that $\Lambda_{\bullet} = (\Lambda_1 \supset \Lambda_2 \dots \supset \Lambda_n)$ is an affine flag, and $c(\Lambda_{\bullet}) = c$.
\end{proof}

Given an affine $(p,q)$-clan $c = ( \dots, c_1, c_2, \dots, c_n, \dots)$, define its associated matrix \defn{$M_{c}$}
to be a matrix in $\GL_n(F)$ such that $M_{c} = (v_n , v_{n-1} , \dots, v_2, v_1)$, where $v_i$'s are vectors defined in Proposition~\ref{affine-clan-to-affine-flag}. 

We now compare $M_c$ with the following notion of an \defn{affine $(p,q)$-clan matrix}. Suppose that $\sigma$ is an $n$-by-$n$ permutation matrix, and $M$ is a matching in $[n]$, consisting of ordered pairs $(i,j) \in [n] \times [n]$ with $i \neq j$ such that if $(i,j) \in M$, then $(i,k), (\ell, j) \notin M$ for all $k \neq j$ and $\ell \neq i$. Moreover, we choose $M$ satisfying the property that if $(i,j) \in M$ then $\sigma^{-1}(i) \leq p$ and $\sigma^{-1} (j) >p$.

\begin{definition}\label{def-affine-clan-matrix}
A matrix $g \in \GL_n(F)$ is called an affine $(p,q)$-clan matrix if it is a sum of 
\ben
\item[(i)] a permutation matrix $\sigma$ satisfying the property $\sigma^{-1}(1) < \sigma^{-1}(2) < \dots < \sigma^{-1}(p)$ and $\sigma^{-1}(p+1) < \sigma^{-1}(p+2) < \dots < \sigma^{-1}(p+q) = \sigma^{-1}(n)$, 
and
\item[(ii)] a matrix $L$ consisting of zeroes in all the entries except the following:
\ben
\item[(a)] if $(i,j) \in M$ and $i<j$, then the $(\sigma^{-1}(j), i)$ entry of $L$ is equal to $t^a$ for some $a \in \ZZ_{\leq 0}$,
\item[(b)] if $(i,j) \in M$ and $i>j$, then the  $(\sigma^{-1}(j), i)$ entry of $L$ is equal to $t^{a}$ for some $a \in \ZZ_{<0}$.
\een
\een
\end{definition}

\begin{remark}\label{affine-clan-to-matrix}
The map $c \mapsto \sigma_{c} M_{c}$ is a bijection between affine $(p,q)$-clans and affine $(p,q)$-clan matrices, where $\sigma_c$ is a suitable permutation matrix on the last $q$ rows of $M_c$.
\end{remark}

\begin{example}
For the affine $(3,3)$-clan $c$ with $(c_1, c_2, c_3, c_4, c_5, c_6) = (1, - , 2, + , -4, 13)$, it holds that 
\[ M_{c} = \begin{pmatrix}
0 &1 &0 &0 &0 &0\\
0 &0 &1 & 0& 0&0\\
 0&0 &0 & 0&0 &1\\
 0&t^{-1} &0 &1 &0 &0\\
0 &0 &0 &0 & 1&0\\
 1&0 &0 &0 & 0 &t^{-2} \end{pmatrix}. \]
Notice that multiplying the permutation $\sigma_c = (4, 5, 6)$ on the left of $M_c$ results in an affine $(p, q)$-clan matrix. Also, it is a straightforward exercise that $c(\Lambda_{\bullet}) = c$ by applying the algorithm described in Definition~\ref{alg-affine-flag-to-affine-clan}.
\end{example}

The following proposition is useful in proving that the map  $\Lambda_\bullet \mapsto c(\Lambda_\bullet)$ is an injection, which is necessary in the proof of Theorem~\ref{main-thm-lattice}.
\begin{proposition} \label{matrix-to-clan-direct-comp}
For any $g \in \GL_n(F)$, there exist $k \in K \cong \GL_p(F) \times \GL_q(F)$ and $b \in B$ such that $kgb$ is an affine $(p,q)$-clan matrix.
\end{proposition}
The proof involves direct calculations of $k_r \cdots k_2k_1gb_1b_2 \cdots b_s$ for suitable $k_1, k_2, \cdots, k_r \in K$ and $b_1, b_2, \cdots, b_s \in B$, and will be provided in the \hyperref[appendix]{appendix}.

Combining Propositions~\ref{affine-flag-to-affine-clan}, \ref{affine-clan-to-affine-flag} and \ref{matrix-to-clan-direct-comp}, we can prove Theorem~\ref{main-thm-lattice} now.

\begin{proof}[Proof of Theorem~\ref{main-thm-lattice}]
Combining Propositions~\ref{affine-flag-to-affine-clan} and \ref{affine-clan-to-affine-flag} alone proves that the map $\Lambda_\bullet \mapsto c(\Lambda_{\bullet})$ between the set of affine flags and affine $(p,q)$-clans is a surjection. By Lemma~\ref{k-inv-quantities}, this map is constant on each $K$-orbit. Therefore, it only remains to prove that  there do not exist two flags in different $K$-orbits but with the same set of data $(i;+), (i;-), (i;\NN)$ and $(i;j)$.

Proposition~\ref{affine-clan-to-affine-flag} gives a natural bijection between the set affine $(p,q)$-clans   and the set of affine $(p,q)$-clan matrices, as noted in Remark~\ref{affine-clan-to-matrix}. By Proposition~\ref{matrix-to-clan-direct-comp}, it holds that for each $g\in \GL_n(F)$, there exist $k \in K$ and $b \in B$ such that $kgb$ is an affine $(p,q)$-clan matrix. By Proposition~\ref{affine-clan-to-affine-flag}, the flags defined by each of these affine $(p,q)$-clan matrices have different sets of data $(i;+), (i;-), (i;\NN)$ and $(i;j)$. Therefore two flags with the same $K$-invariants must lie in the same $K$-orbit as the flag corresponding to their mutual affine $(p,q)$-clan, and hence the map $\Lambda_\bullet \mapsto c(\Lambda_\bullet)$ is also an injection.
\end{proof}

\appendix
\section{Proof of Proposition~\ref{matrix-to-clan-direct-comp} }\label{appendix}
We begin by proving a lemma that simplifies the proof.
Define a \defn{partial identity matrix} to be a matrix which becomes an identity matrix after deleting the all‑zero columns.
Examples of partial identity matrices are
\[
\begin{pmatrix}
1 & 0 & 0 & 0 & 0 & 0 & 0\\
0 & 0 & 1 & 0 & 0 & 0 & 0\\
0 & 0 & 0 & 0 & 1 & 0 & 0\\
0 & 0 & 0 & 0 & 0 & 0 & 1
\end{pmatrix} \quand
\begin{pmatrix}
0 & 1& 0 & 0 & 0 & 0 & 0\\
0 & 0 & 1 & 0 & 0 & 0 & 0\\
0 & 0 & 0 & 0 & 1 & 0 & 0\\
0 & 0 & 0 & 0 & 0 & 1 & 0
\end{pmatrix}.
\]

For $i, j \in [n]$, let $E_{ij}$ denote the $n$-by-$n$ matrix whose $(i,j)$ entry is the only non-zero entry and is equal to $1$. For $m \in \ZZ$, we define $U_{ij,m} := I_n + t^mA E_{ij}$.  Let $T(A)$ be the multiplicative group of $n$-by-$n$ diagonal matrices with all entries in $A^*$. Then the Iwahori subgroup $B$ (upper triangular modulo $t$) of $\GL_n(A)$ can be written as  \cite[Theorem 2.5]{IM}:
\[ B = {T}(A) \prod_{1\leq i<j\leq n}U_{ij,0} \prod_{n\geq i>j \geq 1} U_{ij,1}.\]

Therefore, right multiplication by a matrix $b\in B$ to a matrix $g \in G$ performs a sequence of the following operations: 
\ben
\item[(i)] add $A$-multiples of previous columns of $g$ to the latter columns of $g$, 
\item[(ii)] add $tA$-multiples of latter columns of $g$ to previous columns of $g$, and 
\item[(iii)] multiply an element in $A^*$ to a particular column in $g$. 
\een

\begin{lemma} \label{large-block-reduction}
Suppose $m \leq n$. For each full-rank $m$-by-$n$ matrix $M$ with entries in $F$, there exist elements $g$ in $\GL_m(F)$ and $h$ in $B$ such that $gMh$ is a partial identity matrix.

\begin{proof}
From the first row of any matrix $M$, pick the leftmost entry of minimal order, say in column $a_1$, and use the column operations by right multiplication of $B$ to eliminate all other non-zero entries in the first row. Then from the second row, pick the leftmost entry of minimal order other than the entry in $(2, a_1)$, say in column $a_2$, and use the column operations by right multiplication of $B$ to eliminate all other non-zero entries in the second row except entries in $(2,a_1)$ and $(2,a_2)$. 

Inductively, after right multiplication by elements in $B$, the only non-zero entries of the $m$-by-$n$ matrix will be located at columns $a_1, a_2, \dots a_m$ inductively defined, and the $m$-by-$m$ matrix formed by putting columns $a_1,a_2, \dots a_m$ together is invertible, since the rows are linearly independent. 
Denote the resulting $m$-by-$n$ matrix as $Y = ({y_1}, {y_2}, \dots, {y_n})$ with ${y_i}$ being the $i$-th column of $Y$. 
Define $\sigma \in S_m$ such that $a_{\sigma(1)} < a_{\sigma(2)} < \dots < a_{\sigma(m)}$.  
Now left multiplication by the inverse of $({y_{a_{\sigma(1)}}}, {y_{a_{\sigma(2)}}}, \dots, {y_{a_{\sigma(m)}}})$ gives an $m$-by-$n$ matrix with columns $a_{\sigma(1)}, a_{\sigma(2)}, \dots , a_{\sigma(m)}$ forming the identity matrix, while all other columns are zero.
\end{proof}
\end{lemma}

\begin{example}
Suppose $M = \begin{pmatrix} 1 & t^{-1} \end{pmatrix}$. Then by taking $\begin{pmatrix} t \end{pmatrix} \in \GL_1(F)$ and $\begin{pmatrix} 1 & 0\\ -t & 1 \end{pmatrix} \in B \subset \GL_2(F)$, we have
\[ \begin{pmatrix} t \end{pmatrix} M \begin{pmatrix} 1 & 0\\ -t & 1 \end{pmatrix} = \begin{pmatrix} t \end{pmatrix} \begin{pmatrix} 0 & t^{-1} \end{pmatrix} = \begin{pmatrix} 0 & 1 \end{pmatrix}, \]
which is a partial identity matrix.
\end{example}
Before we consider the general case, let us look at the special case when $p=q=1$, and classify the orbit representatives in $KgB$ with $K = \GL_1(F) \times \GL_1(F)$ and $B \subset \GL_2(F)$ the Iwahori subgroup (upper triangular modulo $t$). 

\begin{lemma} \label{11affineclan}
In the case $n=2$, $p = q = 1$, for each $g \in \GL_2(F)$, there exist $k \in K$ and $b \in B$ such that $kgb$ is equal to either $\begin{pmatrix} 1 & 0 \\ 0 & 1 \end{pmatrix}$, $\begin{pmatrix} 0 & 1 \\ 1 & 0 \end{pmatrix}$, $\begin{pmatrix} 1 & 0 \\ t^a & 1 \end{pmatrix}$ with $a \leq 0$ or $\begin{pmatrix} 0 & 1\\ 1 & t^b \end{pmatrix}$ with $b <0$. 
\end{lemma}

\begin{proof}
Using Lemma~\ref{large-block-reduction} with $m=1$ and $n=2$, each double coset contains a matrix whose first row is either $\begin{pmatrix} 1 & 0 \end{pmatrix}$ or $\begin{pmatrix} 0 & 1 \end{pmatrix}$ under the right action of $B$ and left action of multiplication of non-zero formal Laurent series. We consider these two cases separately.

\ben
\item[(a)] First suppose a double coset contains a matrix of the form $g = \begin{psmallmatrix} 1 & 0 \\ x & y \end{psmallmatrix}$. Since this matrix must be invertible, it holds that $y \neq 0$. 
\ben
\item[(i)] Suppose $x = 0$. In this case we have $\begin{psmallmatrix} 1 & 0 \\ 0 & y^{-1} \end{psmallmatrix} \in K $ and $\begin{psmallmatrix} 1 & 0 \\ 0 & y^{-1} \end{psmallmatrix} g = \begin{psmallmatrix} 1 & 0 \\ 0 & 1 \end{psmallmatrix}$.
\item[(ii)] Suppose $x \neq 0$. If $\ord (x) > \ord (y)$, then $\ord  (x/y) >0$, and hence $\begin{psmallmatrix} 1 & 0 \\ -x/y & 1 \end{psmallmatrix} \in B$. Observe that  
\[\begin{psmallmatrix} 1 & 0 \\ 0 & y^{-1} \end{psmallmatrix} \begin{psmallmatrix} 1 & 0 \\ x & y \end{psmallmatrix}\begin{psmallmatrix} 1 & 0 \\ -x/y & 1 \end{psmallmatrix} =  \begin{psmallmatrix} 1 & 0 \\ 0 & y^{-1} \end{psmallmatrix} \begin{psmallmatrix} 1 & 0 \\ 0 & y \end{psmallmatrix} = \begin{psmallmatrix} 1 & 0 \\ 0 & 1 \end{psmallmatrix}. \] 
If $\ord x \leq \ord y$, then $ \ord (x/y) \leq 0$, and hence $x/y = t^a u$ for some $a \leq 0$ and $u \in A^*$. Observe that
\[ \begin{psmallmatrix} u & 0 \\ 0 & y^{-1} \end{psmallmatrix} \begin{psmallmatrix} 1 & 0 \\ x & y \end{psmallmatrix}\begin{psmallmatrix} u^{-1} & 0 \\ 0 & 1 \end{psmallmatrix} =  \begin{psmallmatrix} u & 0 \\ t^a u & 1 \end{psmallmatrix} \begin{psmallmatrix} u^{-1} & 0 \\ 0 & 1 \end{psmallmatrix} = \begin{psmallmatrix} 1 & 0 \\ t^a & 1 \end{psmallmatrix}. \] 
\een
\item[(b)] Now suppose a double coset contains a matrix of the form $ h = \begin{psmallmatrix} 0 & 1 \\ x & y \end{psmallmatrix}$. Since this matrix must be invertible, it holds that $x \neq 0$. 
\ben
\item[(i)] Suppose $y = 0$. In this case we have $\begin{psmallmatrix} 1 & 0 \\ 0 & x^{-1} \end{psmallmatrix} \in K$ and $\begin{psmallmatrix} 1 & 0 \\ 0 & x^{-1} \end{psmallmatrix}h =  \begin{psmallmatrix} 0 & 1 \\ 1 & 0 \end{psmallmatrix}$.
\item[(ii)] Suppose $y \neq 0$. If $\ord (y) \geq \ord (x)$, then $\ord  (y/x) \geq 0$. Therefore $\begin{psmallmatrix} 1 & -y/x \\ 0 & 1 \end{psmallmatrix} \in B$, and 
\[\begin{psmallmatrix} 1 & 0 \\ 0 & x^{-1} \end{psmallmatrix} \begin{psmallmatrix} 0 & 1 \\ x & y \end{psmallmatrix}\begin{psmallmatrix} 1 & -y/x \\ 0 & 1 \end{psmallmatrix} =  \begin{psmallmatrix} 1 & 0 \\ 0 & x^{-1} \end{psmallmatrix} \begin{psmallmatrix} 0 & 1 \\ x & 0 \end{psmallmatrix} = \begin{psmallmatrix} 0& 1 \\ 1 & 0 \end{psmallmatrix}. \] 
If $\ord (y) < \ord (x)$, then $ \ord (y/x) < 0$, and hence $y/x = t^b u$ for some $b < 0$ and $u \in A^*$. Observe that
\[ \begin{psmallmatrix} u & 0 \\ 0 & x^{-1} \end{psmallmatrix} \begin{psmallmatrix} 0 & 1 \\ x & y \end{psmallmatrix}\begin{psmallmatrix} 1 & 0 \\ 0 & u^{-1} \end{psmallmatrix} =  \begin{psmallmatrix} 0 & u \\ 1 & t^bu \end{psmallmatrix} \begin{psmallmatrix} 1 & 0 \\ 0 & u^{-1} \end{psmallmatrix} = \begin{psmallmatrix} 0 & 1 \\  1& t^b \end{psmallmatrix}. \] 
\een
\een
Thus we know that an exhaustive list of double coset representatives are $\begin{psmallmatrix} 1 & 0 \\ 0 & 1 \end{psmallmatrix}$, $\begin{psmallmatrix} 0 & 1 \\ 1 & 0 \end{psmallmatrix}$, $\begin{psmallmatrix} 1 & 0 \\ t^a & 1 \end{psmallmatrix}$ with $a \leq 0$, and $\begin{psmallmatrix} 0 & 1\\ 1 & t^b \end{psmallmatrix}$ with $b <0$. 
\end{proof}

We are ready to prove Proposition~\ref{matrix-to-clan-direct-comp}.

\begin{proof}[Proof of Proposition~\ref{matrix-to-clan-direct-comp}]
We provide an explicit algorithm consisting of a series of left and right matrix multiplications in $K$ and $B$ respectively, which transforms each matrix $g \in \GL_n(F)$ into an affine $(p,q)$-clan matrix. Write any element $k \in K$ in the form $k = \begin{pmatrix} k_{1} & 0 \\ 0 & k_{2} \end{pmatrix}$ with $k_{1} \in \GL_p(F)$ and $k_{2} \in \GL_q(F)$. 

Observe that left multiplication of a matrix in $K$ performs row operations independently on the first $p$ rows and the last $q$ rows. Specifically, if a set of $p$ columns within the first $p$ rows forms an invertible $p$-by-$p$ matrix $P$, then multiplying a matrix $k$ on the left, where $k_{1} = P^{-1}$ and $k_{2} = I_q$, would transform the $p$-by-$p$ submatrix  into an identity matrix. This operation may modify entries in the remaining $q$ columns of the first $p$ rows, although these columns are mostly filled with zeros in the subsequent calculations. 

Similarly, if a set of $q$ columns within the last $q$ rows forms an invertible $q$-by-$q$ matrix $Q$, then multiplying matrix $k$ on the left, where $k_{1} = I_p$ and $k_{2}= Q^{-1}$, would transform the $q$-by-$q$ submatrix into an identity matrix.

For example in the case concerning certain $p$-columns of the first $p$-rows,
\begin{equation}\label{reduce-to-identity-first-p-rows}
\resizebox{\textwidth}{!}{%
$\begin{pmatrix}
P^{-1} & 0_{p \times q}\\
0_{q \times p} & I_q
\end{pmatrix}
\begin{pmatrix}
& * & \dots & * & & &  & * &   \\
& * &  \dots   & * & & & & * &  \\
& * &  \dots    & * & & & & * &   \\
v_1 & \vdots & & \vdots & v_2& v_3 & \dots & \vdots & v_p \\
& * & \dots & * & & &  & * &  \\
& * &         & * & & & & * & \\
\hline
\# & \# &\dots &\#&& \dots &&\#&\#\\
\vdots & \vdots & \ddots& \vdots & & \ddots & & \vdots&\vdots \\
\# & \# &\dots  & \#&& \dots &&\#&\#
\end{pmatrix}
=
\begin{pmatrix}
1& * & \dots & * & 0& 0&  & * &0  \\
 0 & * & \dots  & * & 1& 0& & * &0 \\
 & * &  \dots  & * & 0 & 1& & * & 0 \\
\vdots & \vdots & & \vdots & \vdots& \vdots & \dots & \vdots & \vdots \\
0& * & \dots & * &0 & 0&  & * & 0\\
0& * &         & * & 0& 0& & * &1 \\
\hline
\# & \# &\dots &\#&& \dots &&\#&\#\\
\vdots & \vdots & \ddots& \vdots & & \ddots & & \vdots&\vdots \\
\# & \# &\dots  & \#&& \dots &&\#&\#
\end{pmatrix}$.}
\end{equation}
Here $v_1, v_2, \dots, v_p \in F^p$, and $P = (v_1\, v_2\, \dots\, v_p)$.
The entries with $*$ might change accordingly, while entries with $\#$ in last $q$ rows will not change. Similarly left multiplication by $\begin{pmatrix}
I_p & 0_{p \times q}\\
0_{q \times p} & Q^{-1}
\end{pmatrix}$
transform a certain $q$-by-$q$ submatrix in the last $q$ rows into an identity matrix.

With the tools above we start to reduce each matrix in $\GL_n(F)$ to an affine $(p,q)$-clan matrix:

Step $1$:
Using Lemma~\ref{large-block-reduction}, we can reduce the bottom $q$ rows into a $q$ by $n$ matrix such that $p$ columns are zero, and the remaining $q$ columns form a $q$ by $q$ identity matrix. Say the $q$ columns are the columns $i_1< i_2< \dots < i_q$. 
After the reduction, the matrix is in this form:
\[
\begin{pmatrix}
*& * & * &\dots &* & *& \dots &     *  \\
 * & * &* & \dots &* &*&  \dots &    * \\
 \vdots& \vdots  &   \vdots &\ddots & \vdots&  \vdots & \ddots&   \vdots\\
  *& * &  *&\dots & * & * & \dots&    *  \\
    \hline
0 & 1 & 0 &\dots &0&  0& \dots&    0\\
0 & 0 & 0 & \dots &0& 1&\dots&0\\
\vdots & \vdots & \vdots & \ddots &\vdots &  \vdots&\ddots& \vdots \\
0 & 0 & 0 & \dots &0& 0&\dots&1
\end{pmatrix}.
\]
Denote the matrix in the above form by $g_1$.

Step $2$: Consider the leftmost smallest order entry for the first row of $g_1$, which is located in the entry $(1,j_1)$.
\ben
\item[(a)] Suppose $j_1 \notin \{i_1, i_2, \dots, i_q\}$. Then the last $q$ entries in column $j_1$ are all zeroes, and hence in the process of eliminating all the other non-zero entries in the first row using restricted column operations described in the paragraph before step $1$, all the entries in the last $q$ rows of the matrix $g_1$ do not change.

This case is illustrated as follows. Suppose the red $*$ in the first row is the $j_1$'th column such that $j_1 \notin \{i_1, i_2, \dots, i_q\}$. All the non-zero entries in the first row other than the red $*$ are eliminated by the entry in red $*$ using column operations.
\[
\begin{pmatrix}
*& * & \textcolor{red}{*} &\dots &* & *& \dots      &*  \\
 * & * &* & \dots &* &*&  \dots     &* \\
 \vdots&\vdots   &  \vdots  &\ddots &\vdots & \vdots  & \ddots   & \vdots\\
  *& * &  *&\dots & * & * & \dots    &*  \\
  \hline
0 & 1 & 0 &\dots &0&  0& \dots  &  0\\
0 & 0 & 0 & \dots &0& 1&\dots&0\\
\vdots & \vdots & \vdots & \ddots &\vdots &  \vdots&\ddots  & \vdots \\
0 & 0 & 0 & \dots &0& 0&\dots&1
\end{pmatrix}
\xrightarrow{\text{column operations}}
\begin{pmatrix}
0& 0 & \textcolor{red}{*} &\dots &0 & 0& \dots     &0 \\
 * & * &* & \dots &* &*&  \dots     &* \\
 \vdots& \vdots  &  \vdots  &\ddots & \vdots&   \vdots& \ddots   & \vdots\\
  *& * &  *&\dots & * & * & \dots    &*  \\
  \hline
0 & 1 & 0 &\dots &0&  0& \dots  &  0\\
0 & 0 & 0 & \dots &0& 1&\dots &0\\
\vdots & \vdots & \vdots & \ddots &\vdots &  \vdots&\ddots&   \vdots \\
0 & 0 & 0 & \dots &0& 0&\dots&1
\end{pmatrix}.
\]

\item[(b)] Suppose $j_1 = i_a$ for some $a \in [1,q]$.  Then by eliminating all non-zero entries in the first row of $g_1$ by the entry $(1, j_1)$ using restricted column operations, some entries in row $p+a$ become non-zero again. If any of the entries $(p+a, i_b)$ is non-zero for $a \neq b \in [1,q]$, use row operation on the last $q$ rows to eliminate the non-zero entries $(p+a, i_b)$ by the pivot $1$ in $(p+b ,i_b)$.

This case is illustrated as follows. Suppose the red $*$ in the first row of the $j_1$'th column is equal to $i_1$. Notice that after the first arrow all the non-zero entries in the first row other than the red $*$ are eliminated by the entry in red $*$ using column operations, but some entries in column $p+1$ becomes non-zero in this process. After the second arrow, some non-zero entries in row $p+1$ are eliminated by the pivot columns in the last $q$ rows of the matrix after the first arrow.
\begin{align*}
\begin{pmatrix}
*& \textcolor{red}{*} & *  &\dots &* & *& \dots      &*  \\
 * & * &* & \dots &* &*&  \dots     &* \\
 \vdots& \vdots  &  \vdots  &\ddots & \vdots&  \vdots & \ddots  & \vdots\\
  *& * &  *&\dots & * & * & \dots    &*  \\
    \hline
0 & 1 & 0 &\dots &0&  0& \dots  &  0\\
0 & 0 & 0 & \dots &0& 1&\dots &0\\
\vdots & \vdots & \vdots & \ddots &\vdots &  \vdots&\ddots&   \vdots \\
0 & 0 & 0 & \dots &0& 0&\dots&1
\end{pmatrix}
 \xrightarrow{\text{ \quad \quad \quad    column operations  \quad\quad\quad       }}&
\begin{pmatrix}
0& \textcolor{red}{*} & 0&\dots &0 & 0& \dots      &0 \\
 * & * &* & \dots &* &*&  \dots    &* \\
 \vdots& \vdots  &  \vdots  &\ddots &\vdots &  \vdots & \ddots  & \vdots\\
  *& * &  *&\dots & * & * & \dots    &*  \\
    \hline
* & 1 & * &\dots &*&  *& \dots &  *\\
0 & 0 & 0 & \dots &0& 1&\dots&0\\
\vdots & \vdots & \vdots & \ddots &\vdots &  \vdots&\ddots&  \vdots \\
0 & 0 & 0 & \dots &0& 0&\dots&1
\end{pmatrix}\\
 \xrightarrow{\text{row operations on the pivot columns}}&
\begin{pmatrix}
0& \textcolor{red}{*} & 0&\dots &0 & 0& \dots      &0 \\
 * & * &* & \dots &* &*&  \dots    &* \\
 \vdots&  \vdots &  \vdots  &\ddots &\vdots & \vdots  & \ddots   & \vdots\\
  *& * &  *&\dots & * & * & \dots    &*  \\
    \hline
* & 1 & * &\dots &*&  0& \dots  &  0\\
0 & 0 & 0 & \dots &0& 1&\dots&0\\
\vdots & \vdots & \vdots & \ddots &\vdots &  \vdots&\ddots  & \vdots \\
0 & 0 & 0 & \dots &0& 0&\dots&1
\end{pmatrix}.
\end{align*}
\een

Denote the resulting matrix after this step by $g_2$.

Step $3$: After step $2$, do row operations to eliminate the non-zero entries below the entry $(1, j_1)$ in the first $p$ rows of $g_2$, and multiply the multiplicative inverse of the entry $(1,j_1)$ to the first row of $g_2$ to reduce the entry $(1,j_1)$ to $1$. These row operations correspond to multiplying elements in $K$ on the left with $g_2$. Then the resulting matrix for the case with $j_1 \notin \{i_1, i_2, \dots, i_q\}$ in Step $2$(a) is equal to
\[
\begin{pmatrix}
0& 0 & 1 &\dots &0 & 0& \dots     &0 \\
 * & * &0 & \dots &* &*&  \dots     &* \\
 \vdots&  \vdots &  \vdots  &\ddots & \vdots&  \vdots & \ddots   & \vdots\\
  *& * &  0&\dots & * & * & \dots    &*  \\
    \hline
0 & 1 & 0 &\dots &0&  0& \dots  &  0\\
0 & 0 & 0 & \dots &0& 1&\dots&0\\
\vdots & \vdots & \vdots & \ddots &\vdots &  \vdots&\ddots&   \vdots \\
0 & 0 & 0 & \dots &0& 0&\dots&1
\end{pmatrix},
\]
while the resulting matrix for the case with $j_1 = i_a$ for some $a \in [1,q]$ in Step $2$(b) is equal to
\[
\begin{pmatrix}
0& 1& 0&\dots &0 & 0& \dots      &0 \\
 * & 0 &* & \dots &* &*&  \dots    &* \\
 \vdots&  \vdots &  \vdots  &\ddots & \vdots& \vdots& \ddots   & \vdots\\
  *& 0 &  *&\dots & * & * & \dots    &*  \\
    \hline
* & 1 & * &\dots &*&  0& \dots  &  0\\
0 & 0 & 0 & \dots &0& 1&\dots&0\\
\vdots & \vdots & \vdots & \ddots &\vdots &  \vdots&\ddots  & \vdots \\
0 & 0 & 0 & \dots &0& 0&\dots&1
\end{pmatrix}.
\]
Denote the resulting matrix after this step by $g_3$.

Step $4$: Move on to consider steps $2$ and $3$ for rows $2$ to $p$, with ``second row" replacing the ``first row". 
Inductively we obtain a matrix $g_4' \in \GL_n(F)$ satisfying 
\ben
\item[(i)] columns $j_1, j_2, \dots j_p$ in the first $p$ rows form a permutation matrix and all other columns having $0$ entries in the first $p$ coordinates, 
\item[(ii)] columns $i_1, i_2, \dots i_q$ in the last $q$ rows form an identity matrix, and 
\item[(iii)] the entries $(p+ \beta, \alpha)$, where $\alpha \notin \{i_1, i_2, \dots i_q\}$ and $\beta \in [1,q]$, could be non-zero if and only if $i_{\beta} \in \{j_1, j_2, \dots, j_p\}$.
\een
By multiplying $g_4'$ by a matrix in $K$ on the left such that $k_{1}$ permutes the first $p$ rows, we can assume that $j_1<j_2 < \dots <j_p$, without loss of generality.

An example of the present situation where $p = q =4$ and $j_1=1$, $j_2= 3$, $j_3 = 4$, $j_4= 8$, and $i_1=2$, $i_2= 3$, $i_3 = 7$, $i_4 = 8$ is as follows:

\begin{equation}\label{ex-few-nonzero-entries-but-still-can-be-fewer}
\begin{pmatrix}
1 & 0 & 0 & 0 & 0 & 0 & 0 & 0\\
0 & 0 & 1 & 0 & 0 & 0 & 0 & 0\\
0 & 0 & 0 & 1 & 0 & 0 & 0 & 0\\
0 & 0 & 0 & 0 & 0 & 0 & 0 & 1\\
  \hline
0 & 1 & 0 & 0 & 0 & 0 & 0 & 0\\
\textcolor{red}{*} & 0 & 1 & \textcolor{red}{*} & * & * & 0 & 0\\
0 & 0 & 0 & 0 & 0 & 0 & 1 & 0\\
\textcolor{red}{*}  & 0 & 0 & \textcolor{red}{*} & * & * & 0 & 1\\
\end{pmatrix},
\end{equation}

while an example with the same $p = q=4$ but $i_1=j_1 = 2$, $i_2 = j_2 = 3$, $i_3 = j_3 = 4$ and $i_4= j_4 = 5$ is as follows:
\begin{equation}
\begin{pmatrix}\label{ex-max-nonzero-entries}
0 & 1 & 0 & 0 & 0 & 0 & 0 & 0 \\
0 & 0 & 1 & 0 & 0 & 0 & 0 & 0 \\
0 & 0 & 0 & 1 & 0 & 0 & 0 & 0 \\
0 & 0 & 0 & 0 & 1& 0 & 0 & 0 \\
  \hline
* & 1 & 0 & 0 & 0 & * & * & * \\
* & 0 & 1 & 0 & 0 & * & * & * \\
* & 0 & 0 & 1 & 0 & * & * & * \\
* & 0 & 0 & 0 & 1 & * & * & * \\
\end{pmatrix}.
\end{equation}
Here $*$ are any elements in $A$ by construction, as long as the matrix is in $\GL_n(F)$. Moreover the entries $(p+ \beta , \alpha)$ are in $tA$ if $\alpha < i_{\beta}$.

Denote the resulting matrix after this step by $g_4$.

Step $5$: If all the $(p+\beta, \alpha)$ entries of $g_4$ are zero, where $\alpha \in \{j_1, j_2, \dots, j_p\}\setminus \{i_1, i_2, \dots, i_q\}$ and $\beta \in [1,q]$, then we skip this step. Suppose that some $(p+\beta, \alpha)$ entries of $g_4$ are non-zero. We can eliminate these entries as follows. Since these entries are either in $tA$ if $\alpha < i_{\beta}$ or in $A$ if $\alpha>i_{\beta}$, we can eliminate the $(p+\beta, \alpha)$ entries using the pivot $1$ in the column $i_{\beta}$ by column operations. 

Now there are some non-zero entries in the first $p$ entries of the columns $i_{\beta}$ in the resulting matrix. However, using the formula \eqref{reduce-to-identity-first-p-rows}, we can offset the extra non-zero entries in the first $p$ rows of the resulting matrix.

The red entries in example \eqref{ex-few-nonzero-entries-but-still-can-be-fewer} are eliminated in the following way:
\begin{align*}
\begin{pmatrix}
1 & 0 & 0 & 0 & 0 & 0 & 0 & 0\\
0 & 0 & 1 & 0 & 0 & 0 & 0 & 0\\
0 & 0 & 0 & 1 & 0 & 0 & 0 & 0\\
0 & 0 & 0 & 0 & 0 & 0 & 0 & 1\\
  \hline
0 & 1 & 0 & 0 & 0 & 0 & 0 & 0\\
\textcolor{red}{*} & 0 & 1 & \textcolor{red}{*} & * & * & 0 & 0\\
0 & 0 & 0 & 0 & 0 & 0 & 1 & 0\\
\textcolor{red}{*}  & 0 & 0 & \textcolor{red}{*} & * & * & 0 & 1\\
\end{pmatrix}
\xrightarrow{\text{column operations}}
& \begin{pmatrix}
1 & 0 & 0 & 0 & 0 & 0 & 0 & 0\\
* & 0 & 1 & * & 0 & 0 & 0 & 0\\
0 & 0 & 0 & 1 & 0 & 0 & 0 & 0\\
* & 0 & 0 & * & 0 & 0 & 0 & 1\\
  \hline
0 & 1 & 0 & 0 & 0 & 0 & 0 & 0\\
0 & 0 & 1 & 0 & * & * & 0 & 0\\
0 & 0 & 0 & 0 & 0 & 0 & 1 & 0\\
0 & 0 & 0 & 0 & * & * & 0 & 1\\
\end{pmatrix}\\
\xrightarrow{\quad\quad \eqref{reduce-to-identity-first-p-rows}\quad\quad}
&\begin{pmatrix}
1 & 0 & 0 & 0 & 0 & 0 & 0 & 0\\
0 & 0 & 1 & 0 & 0 & 0 & 0 & 0\\
0 & 0 & 0 & 1 & 0 & 0 & 0 & 0\\
0 & 0 & 0 & 0 & 0 & 0 & 0 & 1\\
  \hline
0 & 1 & 0 & 0 & 0 & 0 & 0 & 0\\
0 & 0 & 1 & 0 & * & * & 0 & 0\\
0 & 0 & 0 & 0 & 0 & 0 & 1 & 0\\
0 & 0 & 0 & 0 & * & * & 0 & 1\\
\end{pmatrix}.
\end{align*}
In other words, condition (i) and (ii) in Step $4$ still hold, but instead of (iii) the resulting matrix has the following property:
\ben
\item[(iii$'$)] the entries $(p+ \beta, \alpha)$ with $\beta \in [1,q]$ could be non-zero if and only if $i_{\beta} \in \{j_1, j_2, \dots, j_p\}$ and $\alpha \notin \{i_1, i_2, \dots, i_q, j_1, j_2, \dots, j_p\}$.
\een
Denote the resulting matrix after this step by $g_5$.

Step $6$: Now suppose $r$ of the columns $i_1< i_2<  \dots < i_q$ and $j_1<j_2 < \dots <j_p$ coincide, i.e. there exists $a_1< a_2< \dots < a_r \in [1, p]$ and $b_1< b_2< \dots < b_r \in [1,q]$ such that $i_{a_s} = j_{b_s}$ for all $s \in [1,r]$. 
We reduce the resulting matrix $g_5$ to find out the matchings of an affine $(p,q)$-clan matrix by the following inductive procedure. 
\ben
\item[(a)]Among the only non-zero entries in the columns $\alpha \in [n]\setminus \{i_1, \dots i_q, j_1, \dots j_p\}$ of $g_5$, choose a bottom-leftmost smallest order entry, and suppose it is located in $(p+\beta, \alpha)$ with $\beta \in [1,q]$. 
Eliminate any non-zero entries in the last $q$ entries of column $\alpha$ by row operations on the last $q$ rows (corresponding to left multiplications by elements in $K$), and eliminate any non-zero entries other than pivot $1$'s in row $p+\beta$ by column operations (corresponding to right multiplications by elements in $B$), which is possible by the choice of $(p+\beta, \alpha)$.
\item[(b)]However in the pivot column $i_{\beta}$ of the resulting matrix after step $6$(a), some entries in the last $q$ coordinates may be non-zero after the row operations, but with non-negative order if the entry is above $(p+\beta, i_{\beta})$, and positive order of the entry is below $(p+\beta, i_{\beta})$. Eliminate these entries by column operations with the pivot columns $i_1, i_2, \dots, i_{c-1}, i_{c+1}, \dots, i_q$, which correspond to right multiplications by elements in $B$.
\item[(c)] Then use the identity \eqref{reduce-to-identity-first-p-rows} to eliminate any extra non-zero entries in the first $p$ rows. 
\item[(d)]Notice that $i_{\beta} = j_{b_{\ell}}$ for some $\ell \in [1,r]$.
Therefore the only non-zero elements in the columns $\alpha$ and $i_{\beta}$ of the resulting matrix after step $6$(c) are the entry in $(p+\beta, \alpha)$ and the pivot $1$'s in column $i_{\beta} = j_{b_{\ell}}$. and they are the only non-zero elements in the rows $b$ and $p+\beta$, and hence the columns $\alpha$ and $i_{\beta}$ are matched in constructing the incomplete matching $M$ of the affine $(p,q)$-clan matrix. 
\een

To illustrate the above procedures, suppose in the example \eqref{ex-max-nonzero-entries}, the $(4+3, 6)$-entry is the bottom-leftmost smallest order entry, labeled red below:
\[
\begin{pmatrix}
0 & 1 & 0 & 0 & 0 & 0 & 0 & 0 \\
0 & 0 & 1 & 0 & 0 & 0 & 0 & 0 \\
0 & 0 & 0 & 1 & 0 & 0 & 0 & 0 \\
0 & 0 & 0 & 0 & 1& 0 & 0 & 0 \\
  \hline
* & 1 & 0 & 0 & 0 & a_{56} & * & * \\
* & 0 & 1 & 0 & 0 & a_{66} & * & * \\
a_{71} & 0 & 0 & 1 & 0 & \textcolor{red}{a_{76}} & a_{77} & a_{78} \\
* & 0 & 0 & 0 & 1 & a_{86} & * & * \\
\end{pmatrix}.
\]

By definition, we have $\ord(a_{56}), \ord(a_{66}), \ord(a_{77}), \ord(a_{78}) \geq \ord(a_{76})$ and \newline
$\ord(a_{71}), \ord(a_{86}) > \ord(a_{76})$. The procedures above will act as follows:

\begin{align*}
\begin{pmatrix}
0 & 1 & 0 & 0 & 0 & 0 & 0 & 0 \\
0 & 0 & 1 & 0 & 0 & 0 & 0 & 0 \\
0 & 0 & 0 & 1 & 0 & 0 & 0 & 0 \\
0 & 0 & 0 & 0 & 1& 0 & 0 & 0 \\
  \hline
* & 1 & 0 & 0 & 0 & a_{56} & * & * \\
* & 0 & 1 & 0 & 0 & a_{66} & * & * \\
a_{71} & 0 & 0 & 1 & 0 & \textcolor{red}{a_{76}} & a_{77} & a_{78} \\
* & 0 & 0 & 0 & 1 & a_{86} & * & * \\
\end{pmatrix}
&\xrightarrow{(a)}
\begin{pmatrix}
0 & 1 & 0 & 0 & 0 & 0 & 0 & 0 \\
0 & 0 & 1 & 0 & 0 & 0 & 0 & 0 \\
0 & 0 & 0 & 1 & 0 & 0 & 0 & 0 \\
0 & 0 & 0 & 0 & 1& 0 & 0 & 0 \\
  \hline
* & 1 & 0 & -a_{56}/a_{76} & 0 & 0 & * & * \\
* & 0 & 1 & -a_{66}/a_{76}  & 0 & 0 & * & * \\
0 & 0 & 0 & 1 & 0 & \textcolor{red}{a_{76}} & 0 & 0 \\
* & 0 & 0 & -a_{86}/a_{76}  & 1 & 0 & * & * \\
\end{pmatrix}\\
\xrightarrow{(b)}
\begin{pmatrix}
0 & 1 & 0 & a_{56}/a_{76} & 0 & 0 & 0 & 0 \\
0 & 0 & 1 & a_{66}/a_{76} & 0 & 0 & 0 & 0 \\
0 & 0 & 0 & 1 & 0 & 0 & 0 & 0 \\
0 & 0 & 0 & a_{86}/a_{76} & 1& 0 & 0 & 0 \\
  \hline
* & 1 & 0 & 0 & 0 & 0 & * & * \\
* & 0 & 1 & 0  & 0 & 0 & * & * \\
0 & 0 & 0 & 1 & 0 & \textcolor{red}{a_{76}} & 0 & 0 \\
* & 0 & 0 & 0  & 1 & 0 & * & * \\
\end{pmatrix}
&
\xrightarrow{(c)}
\begin{pmatrix}
0 & 1 & 0 & 0 & 0 & 0 & 0 & 0 \\
0 & 0 & 1 & 0 & 0 & 0 & 0 & 0 \\
0 & 0 & 0 & 1 & 0 & 0 & 0 & 0 \\
0 & 0 & 0 & 0 & 1& 0 & 0 & 0 \\
  \hline
* & 1 & 0 & 0 & 0 & 0 & * & * \\
* & 0 & 1 & 0  & 0 & 0 & * & * \\
0 & 0 & 0 & 1 & 0 & \textcolor{red}{a_{76}} & 0 & 0 \\
* & 0 & 0 & 0  & 1 & 0 & * & * \\
\end{pmatrix}.
\end{align*}

If instead the $(4+3, 6)$-entry is the bottom-leftmost smallest order entry, procedures (a), (b) and (c) reduce the matrix into the following form:
\begin{align*}
\begin{pmatrix}
0 & 1 & 0 & 0 & 0 & 0 & 0 & 0 \\
0 & 0 & 1 & 0 & 0 & 0 & 0 & 0 \\
0 & 0 & 0 & 1 & 0 & 0 & 0 & 0 \\
0 & 0 & 0 & 0 & 1& 0 & 0 & 0 \\
  \hline
0 & 1 & 0 & 0 & 0 & * & * & * \\
0 & 0 & 1 & 0  & 0 & * & * & * \\
\textcolor{red}{a_{71}}  & 0 & 0 & 1 & 0 & 0 & 0 & 0 \\
0 & 0 & 0 & 0  & 1 & * & * & * \\
\end{pmatrix}.
\end{align*}

In the general case, notice that the non-zero elements in columns $\alpha$ and $i_{\beta} = j_{b_{\ell}}$ form the following submatrix
\[ \begin{pmatrix} 0& 1\\ * & 1 \end{pmatrix} \quad \text{ or } \quad \begin{pmatrix} 1& 0\\ 1 & * \end{pmatrix} \]
corresponding to the cases $\alpha < i_{\beta}$ and $\alpha> i_{\beta}$ respectively. But throughout the above calculations, the order of the entry $(p+\beta, \alpha)$ is non-negative. Therefore by the calculation for the case $(p,q) = (1,1)$ in Lemma~\ref{11affineclan}, columns $\alpha$ and $i_{\beta}$ are matched, and can be transformed to an affine $(1,1)$-clan matrix in the submatrix $\{\ell , p+\beta\} \times \{\alpha, i_{\beta}\}$.

The example with $a_{76}$ being highlighted in red will be reduced to
\[
\begin{pmatrix}
0 & 1 & 0 & 0 & 0 & 0 & 0 & 0 \\
0 & 0 & 1 & 0 & 0 & 0 & 0 & 0 \\
0 & 0 & 0 & 1 & 0 & 0 & 0 & 0 \\
0 & 0 & 0 & 0 & 1& 0 & 0 & 0 \\
  \hline
* & 1 & 0 & 0 & 0 & 0 & * & * \\
* & 0 & 1 & 0  & 0 & 0 & * & * \\
0 & 0 & 0 & t^{-\ord(a_{76})} & 0 & 1 & 0 & 0 \\
* & 0 & 0 & 0  & 1 & 0 & * & * \\
\end{pmatrix},
\]

while the example with $a_{71}$ being highlighted in red will be reduced to
\begin{align*}
\begin{pmatrix}
0 & 1 & 0 & 0 & 0 & 0 & 0 & 0 \\
0 & 0 & 1 & 0 & 0 & 0 & 0 & 0 \\
0 & 0 & 0 & 1 & 0 & 0 & 0 & 0 \\
0 & 0 & 0 & 0 & 1& 0 & 0 & 0 \\
  \hline
0 & 1 & 0 & 0 & 0 & * & * & * \\
0 & 0 & 1 & 0  & 0 & * & * & * \\
1  & 0 & 0 & t^{-\ord(a_{71})} & 0 & 0 & 0 & 0 \\
0 & 0 & 0 & 0  & 1 & * & * & * \\
\end{pmatrix}.
\end{align*}

Step $7$: Inductively find all the matchings between columns and eliminate extra non-zero entries in the last $q$ rows of the matrix, until every column $i_{a_s} = j_{b_s}$ with $s \in [1,r]$ is matched. The matrix is the affine $(p,q)$-clan matrix we desire.
\end{proof}

\addcontentsline{toc}{section}{Bibliography}
\printbibliography

\end{document}